\title{Piecewise rigid curve deformation \\ via a Finsler steepest descent}
\author{
	\begin{tabular}{c}
		Guillaume Charpiat\\[6mm]
		Stars Lab \\ 
		INRIA Sophia-Antipolis\footnote{\texttt{Guillaume.Charpiat@inria.fr}}
	\end{tabular} 
	\begin{tabular}{c}
		Giacomo Nardi, Gabriel Peyr\'e \\ Fran\c{c}ois-Xavier Vialard \\[2mm]
		Ceremade, \\ Universit\'e Paris-Dauphine\footnote{\texttt{\{nardi,peyre,vialard\}@ceremade.dauphine.fr}}\\
	\end{tabular}	
}
\date{}
\begin{document}

\maketitle

\begin{abstract}
This paper introduces a novel steepest descent flow in Banach spaces. This extends previous works on generalized gradient descent,  notably the work of Charpiat et al.~\cite{charpiat-generalized-gradient}, to the setting of Finsler metrics. Such a generalized gradient  allows one  to take into account a prior on  deformations  (e.g., piecewise rigid)  in order to favor some specific evolutions. We define a Finsler gradient descent method to minimize a functional defined on a Banach space and we prove a convergence theorem for such a method.  In particular, we show that the use of non-Hilbertian norms  on  Banach spaces is  useful  to study non-convex optimization problems where the geometry of the space might play a crucial role to avoid poor local minima.  We show some applications to the curve matching problem. In particular, we characterize piecewise rigid deformations on the space of curves and we study several models  to perform piecewise rigid evolution of curves. 

\subjclass{Primary 49M25; Secondary 65K10, 68U05.}

\keywords{Curve evolution ; Finsler space ; gradient flow ; shape registration}
\end{abstract}
\section{Introduction}

This paper introduces a new descent method to minimize energies defined over Banach spaces. This descent makes use of a generalized gradient which corresponds to a descent direction for a Finsler geometry. We show applications of this method to the optimization over the space of curves, where this Finsler gradient allows one to construct piecewise regular curve evolutions.

\subsection{Previous Works} 
\label{sec-previous-works}

\paragraph{Energy minimization for curve evolution.}

The main motivation for this work is the design of novel shape optimization methods, with an emphasis toward curves evolutions. Shape optimization is a central topic in computer vision, and has been introduced to solve various problems such as image segmentation or shape matching. These problems are often solved by introducing an energy which is minimized over the space of curves.  The first variational method proposed to perform image segmentation through curve evolution is the snake model~\cite{Kass88Snakes}. This initial proposal has been formalized using intrinsic energies depending only on the geometry of the curves. A first class of energies corresponds to a weighted length of the curve, where the weight acts as an edge detector~\cite{caselles-active-contours,malladi-shape-modeling}. A second class of segmentation energies, pioneered by the Mumford-Shah model~\cite{MumfordShah89}, integrates a penalization both inside and outside the curve, see for instance~\cite{CV-01}. Shape registration requires to compute a matching between curves, which in turn can be solved by minimizing energies between pairs of curves. An elegant framework to design such energies uses distances over a space of measures or currents, see~\cite{Glaunes-matching} for a detailed description and applications in medical imaging.

Curve evolution for image processing is an intense area of research, and we refer for instance to the following recent works for applications in image segmentation~\cite{a,b,c,d} and matching~\cite{SotirasDP13,e,f}.

\paragraph{Shape spaces as Riemannian spaces.}

Minimizing these energies requires to define a suitable space of shapes and a notion of gradient with respect to the geometry of this space. The  mathematical study of spaces of curves has been largely investigated in the last years, see for instance~\cite{Younes-elastic-distance,Mennucci-CIME}. 
The set of curves is naturally modeled over a Riemannian manifold~\cite{MaMi}. This corresponds to using a Hilbertian metric on each tangent plane of the space of curves, i.e. the set of vector fields which deform infinitesimally a given curve. This Riemannian framework allows one to define geodesics which are shortest paths between two shapes~\cite{Younes-explicit-geodesic,Mennucci-Stiefel-manifolds}. Computing minimizing geodesics is useful to perform shape registration~\cite{Mennucci-filtering,Younes-04,2010.03.20}, tracking~\cite{Mennucci-filtering} and shape deformation~\cite{Kilian-shape-space}. The theoretical study of the existence of these geodesics depends on the Riemannian metric. For instance, a striking result~\cite{MaMi,Yezzi-Menn-2005b,Yezzi-Menn-2005a} is that the natural $L^2$-metric on the space of curves, that has been largely used in several applications in computer vision, is a degenerate Riemannian metric: any two  curves have distance equal to zero with respect to such a metric.

Beside the computation of minimizing geodesics, Riemannian metrics are also useful to define descent directions for shape optimization. Several recent works~\cite{MaMi,charpiat-new-metrics,Yezzi-Menn-2005a,Yezzi-Menn-2005b} point out that  the choice of the metric, which the gradient depends on,  notably affects the results of a gradient descent algorithm. Carefully designing the metric is thus crucial to reach better local minima of the energy. Modifying the descent flow can also be important for shape matching applications. A typical example of such  Riemannian metrics are Sobolev-type metrics~\cite{sundaramoorthi-sobolev-active,sundaramoorthi-2006,sundaramoorthi-new-possibilities, Yezzi-H2} which lead to smooth curve evolutions.

\paragraph{Shape spaces as Finslerian spaces.}

It is possible to extend the Riemannian framework by considering more general metrics on the tangent planes of the space of curves. Finsler spaces make use of Banach norms instead of Hilbertian norms~\cite{RF-geometry}. A few recent works~\cite{Mennucci-CIME,Yezzi-Menn-2005a} have studied the theoretical properties of Finslerian spaces of curves. To the best of our knowledge, with the notable exception of~\cite{charpiat-generalized-gradient}, which is discussed in detail in Section~\ref{sec-relation-charpiat}, no previous work has used Finslerian metrics for curve evolution.

\paragraph{Generalized gradient flow.}

Beyond shape optimization, the use of non-Euclidean geometries is linked to the study of generalized gradient flows. Optimization on manifolds requires the use of Riemannian gradients and Riemannian Hessians, see for instance~\cite{Opt-manifolds}. Second order schemes on manifolds can be used to accelerate shape optimization over Riemannian spaces of curves, see~\cite{Ring-opti-manifolds}. Optimization over Banach spaces requires the use of convex duality to define the associated gradient flow~\cite{Ulbrich,prox-banach-1,prox-banach-2}. It is possible to generalize these flows for metric spaces using implicit descent steps, we refer to~\cite{AGS} for an overview of the theoretical properties of the resulting flows.

\subsection{Motivation} 

The metrics defined over the tangent planes of the space of curves (e.g. an Hilbertian norm in the Riemannian case and a Banach norm in the Finsler case) have a major impact on the trajectory of the associated gradient descent. This choice thus allows one to favor specific evolutions. A first reason for introducing a problem-dependent metric is to enhance the performances of the optimization method. Energies minimized for shape optimization are non-convex, so a careful choice of the metric is helpful to avoid being trapped in a poor local minimum. A typical example is the curve registration problem, where reaching a non-global minimum makes the matching fail. A second reason is that, in some applications, one is actually interested in the whole descent trajectory, and not only in the local minimum computed by the algorithm. For the curve registration problem, the matching between the curves is obtained by tracking the bijection between the curves during the evolution. Taking into account desirable physical properties of the shapes, such as global or piecewise rigidity, is crucial to achieve state of the art results, see for instance~\cite{citeulike,chang08articulated,Shelton-2000}. In this article, we explore the use of Finsler gradient flows to encode piecewise rigid deformations of the curves.

\subsection{Contributions} 

Our first contribution is the definition of a novel generalized gradient flow, that we call Finsler descent, and the study of the convergence properties of this flow. This Finsler gradient is obtained from the $W^{1,2}$-gradient through the resolution of a constrained convex optimization problem. Our second contribution is the instantiation of this general framework to define piecewise rigid curve evolutions, without knowing in advance the location of the articulations. This contribution includes the definition of novel Finsler penalties to encode piecewise rigid and piecewise similarity evolutions. It also includes the theoretical analysis of the convergence of the flow for $BV^2$-regular curves. Our last contribution is the application of these piecewise regular evolutions to the problem of curve registration. This includes the definition of a discretized flow using finite elements, and a comparison of the performances of Riemannian and Finsler flows for articulated shapes registration.  The Matlab code to reproduce the numerical results of this article is available online\footnote{\url{https://www.ceremade.dauphine.fr/~peyre/codes/}}.

\subsection{Relationship with~\cite{charpiat-generalized-gradient}}
\label{sec-relation-charpiat}

Our work is partly inspired by the generalized gradient flow originally defined in~\cite{charpiat-generalized-gradient}. We use a different formulation for our Finsler gradient, and in particular consider a convex constrained formulation, which allows us to prove convergence results. An application to piecewise rigid evolutions is also proposed in~\cite{charpiat-generalized-gradient}, but it differs significantly from our method. In~\cite{charpiat-generalized-gradient}, piecewise rigid flows are obtained using a non-convex penalty, which poses both theoretical difficulties (definition of a suitable functional space to make the descent method well-defined) and numerical difficulties (computation of descent direction as the global minimizer of a non-convex energy). In our work  we prove a characterization of piecewise rigid deformations that enables the definition of a penalty  depending on the deformation (instead of instantaneous parameters as done in~\cite{charpiat-generalized-gradient}). Then, we generalize this penalty to the $BV^2$-framework obtaining a convex penalty for $BV^2$-piecewise rigid deformations.

\subsection{Paper Organization}

Section~\ref{F} defines the Finsler gradient and the associated steepest descent in Banach spaces, for which we prove a convergence theorem. Section~\ref{SC} introduces the space of $BV^2$-curves and studies its main properties, in particular its stability to reparametrization.
Section~\ref{PWR} characterizes $C^2$-piecewise rigid motions and defines a penalty in the case of  $BV^2$-regular motions. We apply this method in Section~\ref{CM} to the curve registration problem. We minimize a matching energy using the Finsler descent method for $BV^2$-piecewise rigid motions. Section~\ref{discretization} details the finite element discretization of the method. Section~\ref{examples} gives numerical illustrations of the Finsler descent method for curve matching. Section~\ref{similarity} refines the model introduced in Section~\ref{PWR} to improve the matching results by replacing piecewise rigid transforms with piecewise similarities.

\section{Finsler Descent Method in Banach Spaces}\label{F}

Let   $(H, \dotp{\cdot}{\cdot}_{H})$ be  a Hilbert space and let $E$ be a Fr\'echet differentiable energy defined on $H$.

We consider a  Banach space $(\Bb, \|.\|_\Bb)$  which is dense in $H$ and continuously embedded in $H$, and we consider the restriction of  $E$ to $\Bb$ (such a restriction will be also denoted by $E$). 

We aim to solve the following minimization problem
\begin{equation}\label{initial-eq}
	\underset{\GA\in \Bb}{\inf}\; E(\Ga)
\end{equation}
using a steepest descent method. We treat $\Bb$ as a manifold modeled on itself and denote  by $T_\GA \Bb$ the tangent space at $\GA \in \Bb$. In the following we suppose that at every point $\GA\in \Bb$, the space $T_\GA \Bb$ coincides with $\Bb$, although our descent method can be adapted to more general settings. 

For every $\GA\in \Bb$  we define an inner product  $\dotp{\cdot}{\cdot}_{H(\Ga)}$ that is continuous with respect to  $\Ga \in \Bb$, and we suppose that the norms $\|\cdot\|_H$ and $\|\cdot\|_{H(\GA)}$ are uniformly equivalent for every $\GA$  belonging to a ball of $\Bb$ (with respect to the norm on $\Bb$). This makes $H$ complete with respect to the norm $\|\cdot\|_{H(\GA)}$. 
Note that this inner product may be different from the inner product induced by $\dotp{\cdot}{\cdot}_H$ on $T_\GA \Bb$, and in particular it might depend on $\GA$. For instance in the case of Sobolev metrics for the space of curves we usually consider $H= W^{1,2}([0,1],\RR^2)$ and set $\Bb=T_\GA \Bb= W^{1,2}([0,1],\RR^2)$ equipped with the measure defined by the arclength of $\GA$ (see Remark \ref{rem-fins-sob}).

Since $E$ is Fr\'echet differentiable and $(H, \dotp{\cdot}{\cdot}_{H(\GA)})$ is a Hilbert space, by the Riesz representation theorem,  there exists a unique vector  
$v\in H$ such that
$$D E(\GA)(\Phi) = \dotp{v}{\Phi}_{H(\Ga)}\quad \forall \,\Phi\in T_\GA \Bb\,.$$
The vector $v$ represents  
the gradient of $E$ at $\GA$ with respect to the inner product $\dotp{\cdot}{\cdot}_{H(\Ga)}$, and it  is denoted by $v=\nabla_{H(\GA)} E(\GA)$.

\subsection{Finsler Gradient}

The Finsler gradient determines a descent direction by modifying $\nabla_{H(\GA)} E(\GA)$ with respect to a penalty $R_\GA$ that  depends on $\GA$. It is defined by minimizing $R_\GA$ under a constraint $\Ll_\GA$.

\begin{defn}[\BoxTitle{Finsler gradient}] \label{FinslerGrad}
For every $\GA \in \Bb$, let $R_\GA : T_\GA\Bb \rightarrow \R^+\cup\{+\infty\}$ be a function such that  $R_\GA \neq +\infty$ and  $\Ll_\GA \subset T_\GA\Bb$  a   set satisfying
\begin{equation}\label{cond-constraints}
	\Ll_\GA \subset \enscond{
		 	\Phi \in T_\GA\Bb
		}{ 
			\left\langle \nabla_{H(\GA)}E (\GA) , \Phi \right\rangle_{H(\GA)} \geq (1-\rho) \|\nabla_{H(\GA)}E (\GA)\|_{H(\GA)} \|\Phi \|_{H(\GA)}
		 } \,
\end{equation}
where $\rho \in (0,1)$ is a fixed parameter independent of $\GA$. This parameter is typically adapted to the targeted applications (see Section~\ref{examples}), and in particular to the energy $E$. If $R_\GA$ admits a minimum on $\Ll_\GA$ then  a Finsler  gradient for $E$ at $\GA$ with respect to $R_\GA$ is defined as: 
	\eql{\label{defgrad}
		\nabla_{R_\GA} E(\GA) \in {\rm argmin}\; \enscond{ R_\GA(\Phi) }{  \Phi\in \Ll_\GA } \,.
	}
Note that if $ \nabla_{H(\GA)}E (\GA)=0$ then $\Gamma$ is a critical point and any descent algorithm  stops. Note that $R_\Gamma$ is in general not equal to the Banach norm defined over the tangent space. This is important for some applications, such as the one considered in Section \ref{PWR} (piecewise rigid deformations).
\end{defn}	

\par The next theorem gives an existence result for the Finsler gradient which is proved by using the standard  direct method of  calculus of variations. 

\begin{thm}\label{general-existence} Let $T_\GA\Bb$ be a Banach space equipped with a topology $\mathcal{T}(T_\GA\Bb)$ such that every bounded sequence  in $T_\GA\Bb$ converges (up to a subsequence) with respect to the topology $\mathcal{T}(T_\GA\Bb)$.  Let $R_\GA$ be  coercive (i.e., $R_\GA(\Phi)\rightarrow +\infty$ as $\|\Phi\|_{T_\GA\Bb}\rightarrow +\infty$) and lower  semi-continuous with respect to the topology $\mathcal{T}(T_\GA\Bb)$ and we suppose that $\Ll_\GA$ is closed in $T_\GA\Bb$ with respect to the topology $\mathcal{T}(T_\GA\Bb)$. Then Problem~\eqref{defgrad} admits at least a solution.
\end{thm}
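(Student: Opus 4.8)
I would prove Theorem~\ref{general-existence} by the classical direct method of the calculus of variations, organised in three steps: build a minimizing sequence, extract a limit using the compactness built into the topology $\mathcal{T}(T_\GA\Bb)$, and pass to the limit in both the objective $R_\GA$ and the constraint $\Ll_\GA$.

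First I would set $m := \inf\,\enscond{R_\GA(\Phi)}{\Phi\in\Ll_\GA}\in[0,+\infty]$. If $m=+\infty$ every admissible $\Phi$ is trivially a minimizer, so one may assume $m<+\infty$ and pick a minimizing sequence $(\Phi_n)_{n}\subset\Ll_\GA$ with $R_\GA(\Phi_n)\to m$. Next I would invoke coercivity of $R_\GA$ to bound this sequence in $T_\GA\Bb$: if $\|\Phi_n\|_{T_\GA\Bb}$ were not bounded, then along a subsequence $\|\Phi_n\|_{T_\GA\Bb}\to+\infty$, whence $R_\GA(\Phi_n)\to+\infty$, contradicting $R_\GA(\Phi_n)\to m<+\infty$. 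Hence $\sup_n\|\Phi_n\|_{T_\GA\Bb}<+\infty$, and the hypothesis on $\mathcal{T}(T_\GA\Bb)$ (bounded sequences are subsequentially convergent) provides $\Phi^\star\in T_\GA\Bb$ and a subsequence, not relabelled, with $\Phi_n\to\Phi^\star$ for the topology $\mathcal{T}(T_\GA\Bb)$.

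Finally I would pass to the limit. Since $\Ll_\GA$ is closed with respect to $\mathcal{T}(T_\GA\Bb)$ and $\Phi_n\in\Ll_\GA$, the limit $\Phi^\star$ is admissible, $\Phi^\star\in\Ll_\GA$. Lower semi-continuity of $R_\GA$ with respect to $\mathcal{T}(T_\GA\Bb)$ then gives $R_\GA(\Phi^\star)\le\liminf_n R_\GA(\Phi_n)=m$, while $\Phi^\star\in\Ll_\GA$ forces $R_\GA(\Phi^\star)\ge m$; therefore $R_\GA(\Phi^\star)=m$, i.e.\ $\Phi^\star$ solves~\eqref{defgrad}, which is exactly $\Phi^\star=\nabla_{R_\GA}E(\GA)$.

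I do not expect a genuine obstacle in this argument: it is deliberately a soft compactness statement whose hypotheses are tailored to the three closing manipulations. The only point deserving care is that $\mathcal{T}(T_\GA\Bb)$ need not be metrizable — in the intended applications it is a weak or weak-$\ast$ type topology — so the extraction of a convergent subsequence, the closedness of $\Ll_\GA$, and the lower semi-continuity of $R_\GA$ must all be understood sequentially; since the statement is phrased precisely in these sequential terms, the reasoning above applies verbatim. The substantive work is not here but in the later sections, where one must exhibit a concrete Banach space $T_\GA\Bb$, topology $\mathcal{T}(T_\GA\Bb)$, penalty $R_\GA$ and constraint set $\Ll_\GA$ (notably in the $BV^2$ setting of Section~\ref{PWR}) for which coercivity, lower semi-continuity and closedness genuinely hold.
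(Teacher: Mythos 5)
Your proof is correct and follows exactly the same route as the paper's: the direct method, with coercivity yielding boundedness of a minimizing sequence, the sequential compactness hypothesis on $\mathcal{T}(T_\GA\Bb)$ providing a limit, and closedness of $\Ll_\GA$ plus lower semi-continuity of $R_\GA$ closing the argument. Your version is simply more detailed (e.g.\ the trivial case $m=+\infty$ and the remark that all hypotheses are sequential), which is a harmless elaboration of the paper's two-line proof.
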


\begin{proof} As $R_\GA$ is coercive, every minimizing sequence is bounded in $T_\GA\Bb$ so  it converges (up to a subsequence) with respect to the topology $\mathcal{T}(T_\GA\Bb)$ toward an element of $\Ll_\GA$. Now, because of the lower semi-continuity of $R_\GA$, the theorem ensues.
\end{proof}

\par Such a result is the generalization of the usual existence theorem of calculus of variations on a reflexive Banach space. In fact (see Corollary 3.23 p. 71 in \cite{Brezis}), if  $T_\GA\Bb$ is reflexive, the existence of the Finsler gradient is guaranteed whenever  $\Ll_\GA$ is convex and closed with respect to the strong topology of $T_\GA\Bb$, and $R_\GA$ is  coercive, $R_\GA\neq+\infty$, convex, and lower semi-continuous  with respect to the strong topology of $T_\GA\Bb$. These hypotheses guarantee in particular an existence result if $T_\GA\Bb$ is a Hilbert space.
\par The previous theorem guarantees the existence of a minimum on non-reflexive Banach spaces. The key point is the existence of a suitable topology which guarantees compactness of minimizing sequences. We point out that, in general, such a topology is weaker than the strong topology of the Banach space. 
\par We point out that the applications studied in this work concern a minimization problem on $T_\GA\Bb=BV^2(\Circ,\RR^2)$. Such a space is not reflexive but the weak* topology of $BV^2(\Circ,\RR^2)$ satisfies the hypotheses of the previous theorem (see Appendix). Then, for some suitable set $\Ll_\GA$ and penalty $R_\GA$, the existence of the Finsler gradient is guaranteed.
\par The set $\Ll_\GA$ imposes a constraint on the direction of the Finsler gradient and more precisely on the angle between the Finsler and Hilbert gradient. It is crucial to guarantee the convergence of the descent method by the Zoutendijk theorem (see Theorem~\ref{convergence}). The parameter $\rho$ controls the deviation of the Finsler gradient with respect to $\nabla_{H(\GA)}E (\GA)$. This parameter can be tuned by the user to modify the geometry of the trajectory of the flow defined in Section~\ref{subsec-finsler-descent}. The impact of $\rho$ is studied by several numerical simulations in Section~\ref{sec-influ-rho}.

If  the hypotheses of Theorem \ref{general-existence} are verified then  the minimum in~\eqref{defgrad} exists, but in general it is not unique. A Finsler gradient is any minimum of the functional minimized in~\eqref{defgrad}.

Condition~\eqref{cond-constraints} implies
\begin{equation}\label{strictdirection}
	\left\langle \frac{\nabla_{H(\GA)}E (\GA)}{\|\nabla_{H(\GA)}E (\GA)\|_{H(\GA)}} , \frac{\nabla_{R_\GA}E (\GA) }{\|\nabla_{R_\GA}E (\GA) \|_{H(\GA)}}\right\rangle_{H(\GA)} \geq (1-\rho)>0 \quad\quad \foralls \GA\in \Bb.
 \end{equation}
This shows that the Finsler gradient is a valid descent direction, in the sense that
\eq{
	\frac{\d}{\d t} E(\Ga - t\nabla_{R_\GA}E (\GA))\Big|_{t=0} = 
	- \dotp{ \nabla_{H(\GA)}E (\GA) }{ \nabla_{R_\GA}E (\GA) }_{H(\GA)}
	< \, 0\,.
}

\begin{rem}[\BoxTitle{Relationship with~\cite{charpiat-generalized-gradient}}]
Our definition of Finsler gradient is partly inspired by the generalized gradient introduced  in Section 6.1 of~\cite{charpiat-generalized-gradient}. An important difference is that we introduce a constraint $\Ll_\GA$ whereas ~\cite{charpiat-generalized-gradient} defines the gradient as a minimum of $DE(\GA)(\Phi)+R_\GA(\Phi)$ on $T_\GA\Bb$. This is a crucial point because, as shown in the next section, this constraint guarantees the convergence of the descent method associated with the Finsler gradient toward a stationary point of $E$.  
\end{rem}

\begin{rem}[\BoxTitle{Relationship with Sobolev gradient}]\label{rem-fins-sob} We consider the spaces $\Bb=W^{1,2}([0,1],\RR^2)$, $H=L^2([0,1],\RR^2)$. More precisely, for every $\GA\in\Bb$, we set $T_\GA \Bb= W^{1,2}([0,1],\RR^2)$ and we denote by  $L^2(\GA)$ the space $L^2([0,1],\RR^2)$ equipped with the norm 
\eq{
	\norm{\Psi}_{L^2(\GA)}^2 = \int_0^1 |\Psi(s)|^2 |\GA'(s)| \d s.
}
 In order to make such a norm well-defined we suppose that  $|\GA'(s)|\neq 0$  for a.e. $s\in \Circ$.
This setting models smooth parametric planar curves and their deformations $\Psi$. Note that the space of curves is further detailed in Section~\ref{SC}. 

We introduce 
\eq{
	R_\GA(\Phi) = \norm{ D\Phi }_{L^2(\GA)}^2, \qquad  \foralls \Phi \in T_\GA \Bb,
}
\eql{\label{eq-constr-sobolgrad}
	\Ll_\Ga= \enscond{  
		\Phi\in T_\GA \Bb
	}{
		\norm{\nabla_{L^2(\GA)}E(\GA) - \Phi }_{L^{2}(\GA)} \leq \rho \norm{\nabla_{L^2(\GA)}E(\GA) }_{L^{2}(\GA)}
	}~
}
where we denote by $D\Phi$ the weak derivative of $\Phi$. Note that $\Ll_\GA$ satisfies condition~\eqref{cond-constraints}.
For a given differentiable energy $E$,~\eqref{defgrad} becomes
\begin{equation}	\label{example-sobolev-finsler}	
	\nabla_{R_\GA} E(\GA) \in \uargmin{ \Phi \in \Ll_\Ga } \norm{ D\Phi }_{L^2(\GA)}^2~.
\end{equation}
We remark that, comparing with Proposition 4 p. 17 in~\cite{charpiat-generalized-gradient}, the Finsler gradient~\eqref{example-sobolev-finsler}  represents a constrained version of the Sobolev gradient. Note also that in  Definition~\ref{FinslerGrad}, the penalty $R_\GA$ need not  be quadratic so that the negative Finsler gradient can be understood as a generalization of the Sobolev gradient.
\end{rem}

\subsection{Finsler Descent Method}
\label{subsec-finsler-descent}

In this section we consider the minimization problem~\eqref{initial-eq} of  an energy $E$ on  $\Bb$. Given some initialization $\GA_0 \in \Bb$, the Finsler gradient descent is defined as
\begin{equation}\label{sequence}
	\GA_{k+1} = \GA_k - \tau_k \nabla_{R_{\GA_k}} E(\GA_k)
\end{equation}
where  $\nabla_{R_{\GA_k}} E(\GA_k)$ is any minimizer of~\eqref{defgrad} and the step size $\tau=\tau_k>0$ is chosen in order to satisfy the Wolfe constraints 
\begin{equation}\label{Wolfe}
\left\{
\begin{array}{lll}
	E(\GA+\tau v)&\leq& E(\GA)+\alpha\tau \dotp{\nabla_{H}E(\GA)}{v}_{H}\\
	\dotp{ \nabla_{H}E(\GA+\tau v) }{ v }_{H} &\geq& \beta \dotp{\nabla_{H}E(\GA) }{ v }_{H}
\end{array}\right.
\end{equation}
for some  fixed $0<\alpha<\beta<1$ and with $v = -\nabla_{R_{\GA_k}} E(\GA_k)$, see for instance~\cite{Nocedal}, p.37.

We have the following result.

\begin{thm}\label{convergence} 
Let $E\in C^1(H, \R^+)$ be a non-negative energy. We suppose that 
there exists a constant $L>0$ such that 
\begin{equation}\label{grad-lip}
\|\nabla_{H} E (\GA_1) - \nabla_{H} E (\GA_2)\|_H \leq L \|\GA_1-\GA_2\|_H\quad \forall\, \GA_1, \GA_2 \in H\,.
\end{equation}
Then, for the sequence  $\{\GA_k\}_k$ (defined in~\eqref{sequence}),
$\|\nabla_{H} E(\Ga_{k})\|_H\rightarrow 0$. 

\end{thm}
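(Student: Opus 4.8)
The plan is to run the classical Zoutendijk argument, but carefully, since two inner products are in play: the fixed product $\langle\cdot,\cdot\rangle_{H}$, in which the Wolfe rules~\eqref{Wolfe} and the Lipschitz bound~\eqref{grad-lip} are stated, and the point-dependent product $\langle\cdot,\cdot\rangle_{H(\GA)}$, in which the admissible set $\Ll_\GA$, and hence the descent direction $v_k:=-\nabla_{R_{\GA_k}}E(\GA_k)$, is described. Writing $a_k:=\langle\nabla_{H}E(\GA_k),v_k\rangle_{H}$ and using the identity $\langle\nabla_{H}E(\GA_k),\Phi\rangle_{H}=D E(\GA_k)(\Phi)=\langle\nabla_{H(\GA_k)}E(\GA_k),\Phi\rangle_{H(\GA_k)}$ valid for $\Phi\in T_{\GA_k}\Bb$, relation~\eqref{strictdirection} gives $a_k<0$ at every non-critical iterate (at a critical iterate the algorithm stops and there is nothing to prove, so we may assume $v_k\neq0$, $a_k<0$, throughout). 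First I would derive the one-step decrease: the curvature condition in~\eqref{Wolfe} reads $\langle\nabla_{H}E(\GA_{k+1})-\nabla_{H}E(\GA_k),v_k\rangle_{H}\geq(\beta-1)a_k>0$; bounding the left side by Cauchy--Schwarz and then by~\eqref{grad-lip} together with $\GA_{k+1}-\GA_k=\tau_k v_k$ gives $\tau_k\geq\frac{(1-\beta)(-a_k)}{L\,\|v_k\|_{H}^2}>0$, and feeding this back into the Armijo condition in~\eqref{Wolfe} yields
\[
E(\GA_{k+1})\ \leq\ E(\GA_k)\ -\ \frac{\alpha(1-\beta)}{L}\,\frac{a_k^2}{\|v_k\|_{H}^2}\,.
\]
(That a step $\tau_k$ satisfying~\eqref{Wolfe} exists follows, as in the cited line-search theory, from $E\in C^1$, $E\geq0$ bounding $t\mapsto E(\GA_k+tv_k)$ from below, and $v_k$ being an $H$-descent direction.)

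Next I would telescope: since $E\geq0$, summing the one-step decrease over $k$ gives the Zoutendijk-type bound $\sum_k a_k^2/\|v_k\|_{H}^2\leq \frac{L}{\alpha(1-\beta)}E(\GA_0)<+\infty$ (equivalently $\sum_k\|\nabla_{H}E(\GA_k)\|_{H}^2\cos^2\theta_k<\infty$, with $\theta_k$ the $H$-angle between $\nabla_{H}E(\GA_k)$ and $v_k$), hence $a_k^2/\|v_k\|_{H}^2\to0$. Let $\bar\GA$ be an accumulation point, $\GA_{k_j}\to\bar\GA$ in the topology of $\Bb$; being convergent, $\{\GA_{k_j}\}_j$ is bounded in $\Bb$, hence lies in a ball on which $\|\cdot\|_{H}$ and $\|\cdot\|_{H(\GA)}$ are equivalent with constants $c,C$ uniform over that ball. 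This is where the constraint $\Ll_\GA$ is used: since $\nabla_{R_{\GA_k}}E(\GA_k)\in\Ll_{\GA_k}$, condition~\eqref{cond-constraints} gives $-a_k=\langle\nabla_{H(\GA_k)}E(\GA_k),\nabla_{R_{\GA_k}}E(\GA_k)\rangle_{H(\GA_k)}\geq(1-\rho)\|\nabla_{H(\GA_k)}E(\GA_k)\|_{H(\GA_k)}\|v_k\|_{H(\GA_k)}$, so, with $\|v_{k_j}\|_{H(\GA_{k_j})}\geq c\|v_{k_j}\|_{H}$,
\[
\frac{a_{k_j}^2}{\|v_{k_j}\|_{H}^2}\ \geq\ (1-\rho)^2 c^2\,\|\nabla_{H(\GA_{k_j})}E(\GA_{k_j})\|_{H(\GA_{k_j})}^2\,.
\]
Together with $a_k^2/\|v_k\|_{H}^2\to0$ this forces $\|\nabla_{H(\GA_{k_j})}E(\GA_{k_j})\|_{H(\GA_{k_j})}\to0$, and by the uniform equivalence of the two norms on the ball the comparable quantity $\|\nabla_{H}E(\GA_{k_j})\|_{H}$ tends to $0$ as well.

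It remains to pass to the limit $j\to\infty$, which is the only step that uses more than the $H$-geometry: one invokes that convergence $\GA_{k_j}\to\bar\GA$ in the topology of $\Bb$ entails $\GA_{k_j}\to\bar\GA$ strongly in $H$, so that~\eqref{grad-lip} yields $\nabla_{H}E(\GA_{k_j})\to\nabla_{H}E(\bar\GA)$ in $H$; combined with $\|\nabla_{H}E(\GA_{k_j})\|_{H}\to0$ this gives $\nabla_{H}E(\bar\GA)=0$, i.e.\ $D E(\bar\GA)=0$, equivalently $\nabla_{H(\bar\GA)}E(\bar\GA)=0$, which is precisely the statement that $\bar\GA$ is a critical point. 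In the applications $\Bb=BV^2(\Circ,\RR^2)$ with its weak* topology and $H=W^{1,2}(\Circ,\RR^2)$, the compact embedding $BV^2(\Circ,\RR^2)\hookrightarrow W^{1,2}(\Circ,\RR^2)$ provides exactly this implication; in the abstract statement one reads it as a compatibility requirement between $\mathcal{T}(\Bb)$ and $H$. I expect the real obstacle to be exactly this interplay between the fixed product $H$ and the floating product $H(\GA)$: the Zoutendijk sum alone only controls $a_k^2/\|v_k\|_{H}^2$, which says nothing about $\nabla E$ vanishing without the angle constraint~\eqref{cond-constraints}, and the passage from $H(\GA)$-smallness to $H$-smallness along the subsequence needs the uniform norm equivalence on bounded subsets of $\Bb$ — so these two structural hypotheses, not any single estimate, are the crux.
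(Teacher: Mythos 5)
Your proof is correct and follows essentially the same route as the paper's: the Zoutendijk estimate in the ambient $H$-norm, the lower bound on the cosine of the descent angle coming from the constraint \eqref{cond-constraints} via the identity $\langle\nabla_H E,\Phi\rangle_H = DE(\Phi) = \langle\nabla_{H(\GA)}E,\Phi\rangle_{H(\GA)}$ together with the uniform norm equivalence on bounded subsets of $\Bb$, and then passage to the limit along the convergent subsequence. The only differences are that you derive the Zoutendijk inequality from the Wolfe conditions and \eqref{grad-lip} rather than citing it, and that you spell out the final step ($\Bb$-convergence implying $H$-convergence so that \eqref{grad-lip} gives $\nabla_H E(\bar\GA)=0$), which the paper leaves implicit in ``the result ensues.''
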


\begin{proof} 
Since $\{ \GA_k \}$ is the sequence defined by the gradient descent satisfying the assumption of the Zoutendijk theorem (see~\cite{Nocedal}: Theorem 3.2 p.43) for the ambient norm on $H$, we have:
$$\sum_{k=0}^{\infty} \dotp{ 
 		\frac{\nabla_{H}E (\GA_k)}{\|\nabla_{H} E(\Ga_k)\|_H}
	}{ 
		\frac{\nabla_{R_{\GA_k}}E(\GA_k)}{\|\nabla_{R_{\GA_k}}E(\Ga_k)\|_H} 
	}_{H}^2 \,\|\nabla_{H} E(\Ga_k)\|_{H}^2\, < \, \infty\,.$$


As we have assumed that the  norms $\|\cdot\|_H$ and $\|\cdot\|_{H(\GA)}$ are equivalent on every bounded ball of $\Bb$, for $k$ large enough, the condition \eqref{strictdirection} implies :
$$\dotp{ 
 		\frac{\nabla_{H}E (\GA_{k})}{\|\nabla_{H} E(\Ga_{k})\|_H}
	}{ 
		\frac{\nabla_{R_{\GA_{k}}}E(\GA_{k})}{\|\nabla_{R_{\GA_{k}}}E(\Ga_{k})\|_H} 
	}_{H} \geq (1 - \rho)M >0\,
$$
with $M>0$. 
This follows by the fact that 
$$ \langle \nabla_{H}E (\GA_{k}), 
\nabla_{R_{\GA_{k}}} E \rangle_H = DE(\GA_{k})
(\nabla_{R_{\GA_{k}}}E) = \langle \nabla_{H(\GA)}E 
(\GA_{k}), \nabla_{R_{\GA_{k}}}E\rangle_{H(\GA)}$$
and the equivalence of the norms applied to  \eqref{cond-constraints}.

Therefore, we have in particular
$$\sum_{k=0}^{\infty}  \, \|\nabla_{H} E(\Ga_{k})\|_{H}^2\, < \, \infty\,,$$
and the result ensues.
\end{proof}

\begin{rem} 
[{\bf On the Zoutendijk theorem}] In the  previous proof we applied the Zoutendijk theorem in infinite dimensions which is not the case in ~\cite{Nocedal}. However, their proof can be straightforwardly generalized to the case of infinite dimensional  Hilbert spaces.
 \end{rem}

Note that the sequence defined by the Finsler descent method could diverge (for instance if $\nabla_H E(\GA)\rightarrow 0$ as $\|\GA\|_{\Bb}\rightarrow  +\infty$). However, if $E$ is coercive, its level sets are compact with respect to some weaker topology $\tau$ of $\Bb$, and the $H$-gradient is continuous with respect to such a  weak topology, then  the previous theorem guarantees  the convergence of the  Finsler descent  method toward a stationary point of the energy. In fact, as $E$ is coercive, we  have that $\{\GA_k\}$ is uniformly bounded in $\Bb$. Then, as   the level sets of $E$ are $\tau$-weakly compact,  $\{\GA_k\}$ $\tau$-weakly converges (up to a subsequence)
 to an element of $\Bb$. Because of the continuity property of $E$, such a point is a stationary point of $E$.

 \begin{rem}\label{grad_flow}An interesting problem would be to show that the Finsler gradient descent scheme admits a limit flow when the step size tends to zero, or to show that the machinery of gradient flows over metric spaces (see \cite{AGS}) can be adapted to our setting. We believe this is however not trivial and decided to leave this for future work. 
 \end{rem}

\section{Finsler Gradient in the Spaces of Curves}\label{SC}

This section specializes our method to a space of piecewise-regular curves. We target applications to piecewise rigid evolutions to solve a curve matching problem (see Section~\ref{CM}). Note that, in order to perform piecewise rigid evolutions, we are led to deal with curves whose first and second derivatives are not continuous. This leads us to consider  the setting of $BV^2$-functions. We refer the reader to Appendix  for the definition and the main properties of $BV$ and $BV^2$ functions.

\subsection{$BV^2$-curves}

In this section we define the space of $BV^2$-curves and introduce its main properties. This models closed, connected curves admitting a  $BV^2$-parameterization.
\par In the following, for every $\GA\in BV^2(\Circ, \RR^2)$, we denote by  $\d\GA$ the measure defined as 

$$
\d \GA(A)=\int_A |\GA'(s)| \d s \;, \quad \forall \, A\subset \Circ
$$
where $\GA'$ denotes the approximate derivative of $\GA$ (see for instance \cite{AFP}). In the following we identify $[0,1]$ with the unit circle $\Circ$.

\begin{defn}[{\bf $BV^2$-curves}]\label{BV2curves}
We set $\Bb = BV^2( \Circ, \R^2)$ equipped with the  $BV^2$-norm.
For any $\Ga \in \Bb$, we set $T_\GA \Bb = BV^2(\GA)$, the space $BV^2(\Circ,\RR^2)$ equipped with the measure  $\d\GA$.
In $BV^2(\GA)$, differentiation and integration are done with respect to the measure $\d\GA$. For every $\Phi \in T_\GA \Bb$, we have  in particular 

$$\frac{\d \Phi}{\d \GA}(s)\,=\, \underset{\varepsilon \rightarrow 0}{\lim} \, \frac{\Phi(s+\varepsilon)-\Phi(s)}{\d \GA((s-\varepsilon, s+\varepsilon))} \,,\quad \|\Phi\|_{L^1(\GA)} = \int_{\Circ} |\Phi(s)||\GA'(s)|\, \d s\,. $$
We also point out that $\frac{\d \Phi}{\d \GA}(s)= \Phi'(s)/|\GA'(s)|$ for a.e. $s\in \Circ$, which implies that such a derivative is Lebesgue-measurable. Remark that in order to make the previous derivation well defined we have to make a hypothesis  on the derivative. We refer to next section, in particular to \eqref{cond-derivative}, for a discussion about the necessity of such a condition.

The first and second variation are defined as
\begin{equation}\label{TV}
	 TV_\GA\left(\Phi\right) = \sup \enscond{
		\int_{\Circ}\Phi(s)\cdot \frac{\d g}{\d \GA}(s)\, \d\Ga(s) 
	}{
		g\in \mathrm{C}^1_c(\Circ, \RR^2),\|g\|_{L^\infty(\Circ,\R^2)}\leq 1
	} \,
\end{equation}
and 
 \begin{equation}
 TV^2_\GA(\Phi)=\sup \enscond{
 	\int_{\Circ}\Phi\cdot \frac{\d^2 g}{\d\GA^2}(s)\, \d \GA(s) 
	}{
		g\in \mathrm{C}^2_c({\Circ},\mathbb{R}^2 ),\|g\|_{L^{\infty}(\Circ,\mathbb{R}^2 )}\leq 1
	}\,
\end{equation}
for every $\Phi\in BV^2(\Circ,\RR^2)$. Now, as $\frac{\d g}{\d\GA}(s)\, \d\Ga(s)=g'(s)\,\d s$ we get $TV_\GA(\Phi) = \|\Phi'\|_{L^1(\Circ,\RR^2)}$.   The $BV^2$-norm on the tangent space is defined by 
$$\|\Phi\|_{BV^2(\GA)} = \|\Phi\|_{W^{1,1}(\GA)} + TV^2_\GA(\Phi)\,.$$
In a similar way we define $W^{2,1}(\GA)$. Every $\Phi \in T_\GA \Bb$ operates on a curve $\GA \in \Bb$ as
\eq{
	(\GA + \Phi)(s)= \GA(s) + \Phi(s)\,,\quad \foralls s\in\Circ.
}

\end{defn}

\begin{defn}[{\bf Tangent, normal, and curvature}]\label{curvature-dfn}
For every $\GA\in \mathcal{B}$ we define  the following vector
$$\nu_\GA(s)= \underset{r\rightarrow 0}{\lim}\, \frac{D \GA((s-r,s+r))}{|D \GA|((s-r,s+r))} \,$$
where $|D\GA|$ denotes the total variation of $\GA$ and $D\GA$ denotes the vector-valued measure associated with the total variation. Note that, as $\GA\in W^{1,1}(\Circ, \RR^2)$, $|D\GA|$ coincides with the measure $\d \GA$ (we recall that the total variation of a $ W^{1,1}$-function coincides with the $L^1$-norm of its derivative) and the limit defining $\nu_\GA$ exists  for $\d\GA$-a.e. $s\in \Circ$.
Moreover we have $\|\nu_\GA\|=1$ for $\d\GA$-a.e. $s\in\Circ$.


Now, $\GA\in W^{1,1}(\Circ, \RR^2)$, and we can suppose  that $|\GA'(s)|\neq 0$ for almost every $s\in \Circ$. This implies in particular that a subset of $\Circ$ is $\d\GA$-negligible if and only if it is Lebesgue-negligible.  Then,  the tangent and normal vectors to the curve at $\d\GA$-a.e. point $\Ga(s)$ are defined as 
\begin{equation}\label{def-tangente}
	\tgam(s) = \frac{\nu_\GA(s)}{\|\nu_\GA(s)\|}\quad \quad \n_\ga(s)= \tgam(s)^\bot
\end{equation}
\eq{
	\qwhereq (x,y)^\bot= (-y,x)\;,\quad \foralls (x,y)\in \RR^2.
}
\par  We point out that $\nu_\GA(s) = \GA'(s)/|\GA'(s)|$ for a.e. $s\in \Circ$ and  $\tgam\in BV(\Circ,\RR^2)$ with   $\tgam\cdot\tgam=1$ for a.e. $s\in \Circ$. Thus, by differentiating with respect to $\d \GA$, we get that the measure  $\tgam\cdot D_\GA\tgam $ is null ($D_\GA$ denotes here the vector-valued measure associated with the total variation $TV_\GA$).
Then, there exists a real measure $\rm{curv}_\GA$ such that
\begin{equation}\label{deriv-tang}
D_\GA\tgam = \ngam \,\rm{curv}_\GA\,.
\end{equation}
By the definition of $\ngam$ we also have
\begin{equation}\label{deriv-norm}
	D_\GA\ngam = -\tgam \,\rm{curv}_\GA\,.
\end{equation}
The measure $\rm{curv}_\GA$ is  called generalized curvature of $\GA$, and, in the case of a smooth curve,  it  coincides with the measure $\kappa_\GA \,\d s$ where $\kappa_\GA$ denotes the standard scalar curvature of $\GA$. 
\par From the properties of the total variation  (see for instance~\cite{AFP}) it follows that 
\begin{equation}\label{curvature}
	|\rm{curv}_\GA|(\Circ)\leq |D^2\GA|(\Circ)
\end{equation}
where $|\rm{curv}_\GA|(\Circ)$ denotes the total variation of the generalized curvature on the circle. 
\end{defn}

\begin{defn}[{\bf Projectors}]
We denote by $\Pi_\GA$ the  projection on the normal vector field $ \n_\ga$
\begin{equation}\label{normal-proj}
	\Pi_\GA (\Phi)(s) \;\;=\;\; \Big( \Phi(s) \cdot \n_\ga(s) \Big)\, \n_\ga(s),
\end{equation}
where $\cdot$ is the inner product in $\RR^2$. 
We denote by $\Sigma_\GA$ the  projection on the tangent vector  field $\tgam$
\begin{equation}\label{tangent-proj}
	\Sigma_\GA (\Phi)(s) \;\;=\;\; \Big( \Phi(s) \cdot \tgam(s) \Big)\, \tgam(s)\, .
\end{equation}
\end{defn}

\begin{defn}[{\bf Hilbertian structure}]\label{hilbert-struct}
The Banach space $\Bb=BV^2(\Circ,\RR^2)$ is continuously embedded in the  Hilbert space $H =W^{1,2}(\Circ,\RR^2)$. 

For every $\GA\in \Bb$, we define $W^{1,2}(\GA) = W^{1,2}(\Circ, \RR^2)$, where integration is done with respect to the measure $\d \GA$. In particular, if $\GA$ verifies $\underset{s\in\Circ}{{\rm essinf}}|\GA'(s)|> 0$, then the norms of $W^{1,2}(\Circ, \RR^2)$ and $W^{1,2}(\GA)$ are equivalent.  
This defines the following inner product on $T_\GA \Bb$
\eql{\label{eq-inner-prod-curve}
	\langle\Phi,\,\Psi\rangle_{W^{1,2}(\GA)} =\int_{\Circ} \Phi(s) \cdot \Psi(s)\; \d\Ga(s) + \int_{\Circ} \frac{\d\Phi}{\d \GA}(s) \cdot  \frac{\d\Psi}{\d \GA}(s)\; \d\Ga(s) \quad \forall \, \Phi, \Psi\in T_\GA \Bb\,.
}

\end{defn}


Finally, recall that for a Fr\'echet-differentiable energy $E$ on $H$, the $W^{1,2}(\GA)$-gradient of $E$ at $\Ga$ is defined as the unique deformation $\nabla_{W^{1,2}(\GA)} E(\Ga)$ satisfying :
$$D E(\Ga)(\Phi) \; = \; \langle\nabla_{W^{1,2}(\GA)} E(\Ga),\,\Phi\rangle_{W^{1,2}(\GA)}\,, \quad  \foralls \Phi\, \in\, T_\GA\Bb\, $$
where $D E(\Ga)(\Phi)$ is the directional derivative.

\subsection{Geometric Curves and Parameterizations}\label{repa}

For applications in computer vision, it is important that the developed method (e.g. a gradient descent flow to minimize an energy) only depends on the actual geometry of the planar curve, and not on its particular parametrization. We denote $[\GA] = \GA(\Circ)$ the geometric realization of the curve, i.e. the image of the parameterization in the plane.

If, for two curves $\GA_1,\GA_2 \in \Bb$ there exists a smooth invertible map $\phi:\Circ \to \Circ$ such that $\GA_2 = \GA_1 \circ \phi$, then $\GA_2$ is a reparameterization of $\GA_1$ and these parameterizations share the same image, i.e. $[\GA_1] = [\GA_2]$. This section shows, in some sense, the converse implication in the $BV^2$ framework, namely the existence of a reparameterization map between two curves sharing the same geometric realization. This result is important since it shows the geometric invariance of the developed Finsler gradient flow. 

Note however that this is clearly not possible without any constraint on the considered curve. For instance, there is no canonical parameterization of an eight-shaped curve in the plane. We will only consider injective curves $\GA \in \Bb$ satisfying the following additional property 
\begin{equation}\label{cond-derivative}
	0\notin \overline{\text{Conv}}(\GA'(s^+),\GA'(s^-)) \quad \foralls s\in \Circ\,.
\end{equation}
Here $\overline{\text{Conv}}$ denotes the closed convex envelope (a line segment) of the right and left limits $\GA'(s^+)$ and $\GA'(s^-))$ of the derivative of $\GA$ at $s$. We will show in the following that such a property gives a generalized definition of immersion for  $BV^2$-curves and implies that the support of the curve has no cusp points. 
\par We define the set of curves
\begin{equation}
	\Bb_0 = \{ \GA \in BV^2(\Circ,\Pl) 
	\,:\,
		\GA\;\mbox{is injective and satisfies}\;\eqref{cond-derivative}
	\}
\end{equation}
equipped with the $BV^2$-norm. 

Note that it is difficult to ensure that the iterates $\{\Gamma_k\}$ defined by \eqref{sequence} stay in $\Bb_0$, since $\Bb_0$ is not a linear space. As shown in Proposition \ref{openB0} below, $\Bb_0$ is an open set, so that one might need to use small step sizes $\tau_k$ to guarantee that $\Gamma_k \in \Bb_0$. This is however no acceptable, because it could contradict the constraints (2.8) and prevent the convergence of $\Gamma_k$ to a stationary point of $E$. This issue reflects the well known fact that during an evolution, a parametric curve can cross itself and become non-injective. 


We also note that, as pointed out in Definitions \ref{hilbert-struct} and \ref{BV2curves}, condition \eqref{cond-derivative} guarantees that the norms on $L^2(\Circ, \RR^2)$ and $L^2(\GA)$ and on $BV^2(\Circ, \RR^2)$ and $BV^2(\GA)$ are equivalent, so that the abstract setting described in Section \ref{F} is adapted to our case.

We first show that property \eqref{cond-derivative} implies local injectivity of the curve and that this local injectivity remains true in a neighborhood of the curve.

\begin{prop}\label{open_condition}   
Let $\Gamma_0 \in BV^2(\Circ,\Pl) $ and $t \in \Circ$ such that condition \eqref{cond-derivative} is satisfied. There exists $\varepsilon>0 \, ,  \gamma>0$ and $n \in \R^2$ a unit vector such that,  if $\|\Gamma_0 - \Gamma \|_{BV^2(\Circ,\Pl) } < \gamma$, then $\GA_0$ and $\GA$ are injective on $|s-t|<\varepsilon$, and 
$$|\langle \Gamma(s) -\GA(t) ,n \rangle| > \gamma|s-t|\quad \mbox{on} \quad |s - t| < \varepsilon\,.$$
\end{prop}

\begin{proof}
As $\GA_0'$ is a function of bounded variation, the left and right limits exist and are finite. Moreover, $\GA_0$ verifies \eqref{cond-derivative} at $t$  if and only if there exists a  unit vector $n$ such that $\langle \GA_0'(t^+), n \rangle > 0$ and $\langle \GA_0'(t^-), n \rangle > 0$. 
Let $\gamma_0 = \frac 1 2 \min\{\langle \GA_0'(t^-), n \rangle ,\langle \GA_0'(t^+), n \rangle\}$. 
By the fact that $\lim_{s \to t^-} \GA_0'(s)= \GA_0'(t^-)$ and $\lim_{s \to t^+} \GA_0'(s)= \GA_0'(t^+)$, there exists $\epsilon>0$
such that 
$$|\langle \Gamma_0(s) -\GA_0(t) ,n \rangle| > \gamma_0|s-t|\quad \mbox{if} \quad |s - t| < \varepsilon\,,$$
which proves the local injectivity for $\GA_0$.

Now, since $BV(\Circ,\RR^2)$ is continuously embedded in $L^\infty(\Circ,\RR^2)$, if
 $\|\Gamma_0 - \Gamma \|_{BV^2(\Circ,\Pl) } < \gamma_0/2$, then $\langle \GA'(t^+), n \rangle > \gamma_0/2$ and $\langle \GA'(t^-), n \rangle > \gamma_0/2$ which proves that $\GA$ verifies \eqref{cond-derivative}. Moreover we get 
$$|\langle \Gamma(s) -\GA(t) ,n \rangle| > \frac{\gamma_0}{2}|s-t|\quad \mbox{on} \quad |s - t| < \varepsilon\,,$$ 
which proves the local injectivity of $\GA$.
  The result ensues by setting  $\gamma = \gamma_0 / 2$.
\end{proof}

\begin{prop}\label{openB0}   
	$\Bb_0$ is an open set  of $\Bb=BV^2(\Circ, \RR^2)$. 
\end{prop}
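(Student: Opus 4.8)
## Proof Proposal

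The plan is to show that $\Bb_0$ is open by showing that its complement (within $\Bb$) is closed, or more directly, by showing that around every $\GA \in \Bb_0$ there is a $BV^2$-ball contained in $\Bb_0$. The set $\Bb_0$ is defined by two conditions: injectivity of $\GA$ and the immersion condition $\essinf_{s}|\GA'(s)| > 0$. I would handle these two conditions separately, showing that each defines an open set, and then intersect. The key analytic tool is the continuous embedding $BV^2(\Circ,\RR^2) \hookrightarrow C^1(\Circ,\RR^2)$ (this is the one-dimensional Sobolev-type embedding for $BV^2$ functions on a compact interval; it follows from the facts recalled in the Appendix, since $\GA' \in BV$ and $BV(\Circ)\hookrightarrow L^\infty$, while $\GA \in W^{1,1}$ with bounded derivative is Lipschitz hence continuous). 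Under this embedding, $BV^2$-convergence implies $C^1$-convergence, in particular uniform convergence of both $\GA$ and $\GA'$.

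First I would treat the immersion condition. Suppose $\GA \in \Bb_0$, so $m := \essinf_s |\GA'(s)| > 0$; since $\GA' \in BV$ has a continuous representative (or at least we may argue with the precise representative), $m$ is actually $\min_s |\GA'(s)|$ after choosing the good representative. If $\|\Gamma - \GA\|_{BV^2(\GA)}$ or rather $\|\Gamma-\GA\|_{BV^2}$ is small, then by the embedding $\|\Gamma' - \GA'\|_{L^\infty} \le C\|\Gamma - \GA\|_{BV^2} < m/2$, so $\essinf_s |\Gamma'(s)| \ge m/2 > 0$, and condition~\eqref{cond-derivative} persists. This gives an open ball around $\GA$ on which the immersion condition holds.

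Next, the injectivity condition, which I expect to be the main obstacle. Injectivity is not stable under arbitrary small perturbations in a naive topology, but it is stable under $C^1$-small perturbations precisely because we have the lower bound on $|\GA'|$ (an embedded $C^1$ curve has a positive "reach"/tube radius). The argument: for an injective $\GA$ with $|\GA'| \ge m > 0$, I claim there is $\delta > 0$ such that $|\GA(s) - \GA(t)| \ge \delta$ whenever $d_{\Circ}(s,t) \ge \epsilon_0$ for suitable $\epsilon_0$, while for $d_{\Circ}(s,t) < \epsilon_0$ one has $|\GA(s)-\GA(t)| \ge \tfrac{m}{2} d_{\Circ}(s,t)$ by the mean value / fundamental theorem of calculus estimate using continuity of $\GA'$ (choose $\epsilon_0$ so that $|\GA'(s) - \GA'(t)| < m/2$ on that scale — uniform continuity of $\GA'$). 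Compactness of $\{(s,t) : d_\Circ(s,t)\ge \epsilon_0\}$ together with injectivity gives the positive $\delta$. Then a perturbation $\Gamma$ with $\|\Gamma - \GA\|_\infty < \delta/2$ and $\|\Gamma' - \GA'\|_\infty < m/4$ remains injective: for far-apart parameters $|\Gamma(s)-\Gamma(t)| \ge \delta - \delta = $ (adjust constants) $> 0$, and for close parameters the derivative bound still gives a positive lower bound on $|\Gamma(s)-\Gamma(t)|/d_\Circ(s,t)$. Since both smallness requirements are implied by $\|\Gamma - \GA\|_{BV^2}$ being small (via the embedding), we obtain a $BV^2$-ball around $\GA$ inside the injective curves.

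Finally I would combine: choose the radius to be the minimum of the two radii found above; then the whole $BV^2$-ball lies in $\Bb_0$, proving $\Bb_0$ is open. The only subtlety to be careful about is the choice of representatives (working with the precise/continuous representative of $\GA'$ so that pointwise statements like $\min|\GA'|$ and uniform continuity make sense), but this is routine given the $BV^2 \hookrightarrow C^1$ embedding. The genuinely load-bearing step is the quantitative injectivity estimate — producing the uniform separation constant $\delta$ from compactness plus the derivative lower bound — everything else is a direct consequence of the embedding.
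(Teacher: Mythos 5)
Your overall route is the same as the paper's: show separately that the immersion condition \eqref{cond-derivative} and injectivity are each stable under small $BV^2$ perturbations, using the control of $\|\GA'-\Lambda'\|_{L^\infty}$ furnished by the embedding of $BV$ into $L^\infty$; split the injectivity check into a near-diagonal regime (a quantitative local injectivity estimate) and a far-from-diagonal regime (a positive separation constant obtained from compactness and injectivity of the reference curve); then intersect the two neighbourhoods. Your treatment of the immersion condition and of the far-from-diagonal regime is correct and essentially identical to the paper's.

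The gap is in the near-diagonal step. You assert that $BV^2(\Circ,\RR^2)$ embeds into $C^1$ and that $\GA'$ ``has a continuous representative'', and you then invoke uniform continuity of $\GA'$ to choose a scale $\epsilon_0$ on which $|\GA'(s)-\GA'(t)|<m/2$. This is false: a $BV^2$ curve has $\GA'\in BV$, and a $BV$ function of one variable can jump (a step function has no continuous representative, precise or otherwise). The correct embedding, recalled in the paper's appendix, is $BV^2\hookrightarrow W^{1,\infty}$: $\GA$ is Lipschitz and $\GA'$ is bounded, not continuous. Curves with corners --- exactly the objects this paper is built to handle --- have discontinuous $\GA'$, and for them your choice of $\epsilon_0$ is impossible, since the oscillation of $\GA'$ across a jump does not shrink with the scale. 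Hence the load-bearing estimate $|\GA(s)-\GA(t)|\geq \tfrac{m}{2}\,d_{\Circ}(s,t)$ for $d_{\Circ}(s,t)<\epsilon_0$ is not established by your argument. Note that this is precisely the step the paper itself passes over quickly (it asserts \eqref{local-injective} as ``essentially due to condition \eqref{cond-derivative} and the compactness of $\Circ$''), and that the step is genuinely delicate rather than routine: writing $\GA(s)-\GA(t)=\int_t^s\GA'$, the average of $\GA'$ over a short interval straddling a jump point is close to a convex combination of the one-sided limits of $\GA'$ there, and if those limits are antiparallel this combination can vanish even though $|\GA'|\geq m$ a.e. (consider $\GA(s)=(-|s|,s^3)$ near $s=0$: injective, satisfying \eqref{cond-derivative}, yet $|\GA(s)-\GA(-s)|/|2s|\to 0$). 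A complete proof must therefore either rule out such cusps or argue differently; appealing to continuity of $\GA'$ cannot be the fix.
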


\begin{proof}
If $\Lambda\in \Bb_0$ then 
\eq{
	m = \underset{s\in \Circ}{\rm{essmin}} \,| \Lambda'(s) |> 0\,.
}
Now, by Proposition \ref{open_condition}, if $$\|\GA - \Lambda\|_{BV(\Circ, \RR^2)}< m/4$$ then $\GA$ is locally injective and verifies \eqref{cond-derivative}. 
Moreover, as $\Circ$ is compact and \eqref{cond-derivative} is satisfied on $\Circ$, there exists $\varepsilon,\alpha>0$  such that
\begin{equation}\label{local-injective}
|\Lambda(s)-\Lambda(s')|\geq \alpha|s-s'|, \quad \forall\,s,s'\in\Circ\;\; \mbox{such that}\;\;|s-s'|\leq \varepsilon\,.
\end{equation}

Then, as $\Lambda$ is injective, if we take $\|\GA - \Lambda\|_{BV^2(\Circ,\RR^2)}< \beta(\Lambda)$ where 
$$\beta(\Lambda) = \frac{1}{2}\mbox{min}\,\left\{\alpha, \underset{s\in\Circ}{\inf}\;\underset{|s-s'|> \varepsilon}{\inf}\,\|\Lambda(s)-\Lambda(s')\|\right\}$$
then  $\GA$ is also globally injective.
\par Then
$$\left\{\GA\in \mathcal{B}_0\, \big| \;\|\GA - \Lambda\|_{BV^2(\Circ, \RR^2)}< {\rm{min}}\left\{\frac{m}{4}, \beta(\Lambda)\right\}\right\}\subset \Bb_0\, $$
which proves that $\Bb_0$ is an open set of $ BV^2(\Circ, \RR^2)$.
\end{proof}

The next proposition proves the existence of a reparameterization between two curves sharing the same image. 
\begin{prop}\label{reparameterization}({\bf Reparameterization})
For every  $\GA_1, \GA_2\in \Bb_0$ such that $[\GA_1]=[\GA_2]$,  there exists a homeomorphism  $\varphi\in BV^2(\Circ,\Circ)$ such that 
$$\GA_1=\GA_2\circ\varphi \,.$$
\end{prop}

\begin{proof}
For every $\GA_1, \GA_2\in\Bb_0$ we consider the arc-length parameterizations defined by
$$\varphi_{\GA_1}, \varphi_{\GA_2} : \Circ \rightarrow \Circ$$
$$\varphi_{\GA_1}(s) = \frac{1}{\mbox{Length}(\GA_1)}\int_{s_1}^s \,|\GA_1'(t)| \,dt \quad ,\;\varphi_{\GA_2}(s) = \frac{1}{\mbox{Length}(\GA_2)}\int_{s_2}^s \,|\GA_2'(t)|\,dt $$
with $s_1, s_2\in \Circ$ such that $\GA_1(s_1)=\GA_2(s_2)$.

Because of property~\eqref{cond-derivative} we can apply the inverse function theorem for Lipschitz functions (see Theorem 1 in~\cite{Clarke}) which allows to define  
$\varphi_{\GA_1}^{-1}, \varphi_{\GA_2}^{-1}\in BV^2(\Circ, \Circ)$. It follows that
$$(\GA_1 \circ \varphi_{\GA_1}^{-1} \circ \varphi_{\GA_2})(s) = \GA_2(s)\quad \foralls s\in \Circ.$$
\end{proof}

\subsection{Geometric Invariance}

For $BV^2$ curves, the geometric invariance of the developed methods should be understood as an invariance with respect to $BV^2$ reparameterizations. 

\begin{defn}
	Let $G_{BV^2}$ denote the set of homeomorphisms  $\varphi\in BV^2(\Circ,\Circ)$ such that $\varphi^{-1} \in BV^2(\Circ,\Circ)$. 
 Note that for every $\GA \in BV^2(\Circ, \RR^2)$ we have $\GA\circ \varphi \in BV(\Circ, \RR^2)$ for every $\varphi\in G_{BV^2}$. In fact, as every $BV^2$-function  is Lipschitz-continuous, by the chain-rule for $BV$-function, we get $\GA\circ \varphi \in BV(\Circ, \RR^2)$. Moreover, $(\GA\circ \varphi)'= \GA'(\varphi)\varphi' \in BV(\Circ, \RR^2)$ because $BV$ is a Banach algebra (one can check that $\GA'(\varphi)\in BV(\Circ, \RR^2)$ by performing the change of variables $t=\varphi(s)$ in the definition of total variation).   
\end{defn}

To ensure this invariance, we consider energies $E$ and penalties $R_\GA$ such that
$$
	E(\GA\circ\varphi) = E(\GA)\;\;, \quad R_{\GA\circ\varphi}(\Phi\circ\varphi) = R_{\GA}(\Phi)\quad\quad  \foralls \GA \in \Bb_0,\,\foralls \phi \in G_{BV^2} ,\,\foralls\Phi \in T_\GA\Bb\,.
$$
This implies that 
$$\nabla_{R_{\GA\circ\varphi}}E(\GA\circ\varphi)(\Phi\circ\varphi)= \nabla_{R_{\GA}}E(\GA)(\Phi)\circ\varphi$$
so that the descent scheme~\eqref{sequence} does not depend on the parameterization of $\GA$. Finally, as 
$$(\GA - \tau\nabla_{R_{\GA}} E(\GA))\circ \varphi = \GA\circ \varphi  - \tau\nabla_{R_{\GA\circ \varphi }} E(\GA\circ \varphi ), $$
for $\tau=\tau_k$, the descent step in~\eqref{sequence} is also invariant under reparameterization. 

This shows that the Finsler gradient flow can actually be defined over the quotient space
$\Bb/G_{BV^2}$. To avoid unnecessary technicalities, we decided not to use this framework and develop our analysis in the setting of the vector space $\Bb$.

Another consequence of this invariance is that, as long as the evolution~\eqref{sequence} is in $\Bb_0$, the flow does not depend on the choice of the parameterization. However, as already noted in Section~\ref{repa}, it might happen that the sequence leaves $\Bb_0$, in which case different choices of parameterizations of an initial geometric realization can lead to different evolutions.

\section{Piecewise Rigidity}\label{PWR}

This section defines a penalty $R_\GA$ that favors piecewise rigid $BV^2$ deformations of $\BVD$-curves.  For the sake of clarity we present the construction of this penalty in two steps. We first characterize in Section~\ref{GRD} $C^2$-global rigid deformations for  smooth curves. Then, in Section~\ref{BV2M}, we introduce a  penalty that favors  piecewise rigid $BV^2$ deformations for curves belonging to $\Bb$.

\subsection{Rigid Curve Deformations}\label{GRD}

A smooth curve evolution $\GA_t\in C^1(\RR, \Bb)$ reads
\eql{\label{eq-continuous-flow}
	\foralls t \in \RR, \quad \pd{\GA_t(s)}{t} = \Phi_t(s)
	\qwhereq
	\Phi_t \in T_{\GA(t)} \Bb\,.
}
 We further assume in this section that $\GA_t$ is a $C^2$ curve. This evolution is said to be globally rigid if it preserves the pairwise distances between points along the curves, i.e. 
\eql{\label{eq-rigid-flow}
	\foralls t \in \RR,\quad \foralls (s,s') \in \Circ \times \Circ, \quad
	|\GA_t(s)-\GA_t(s')|=|\GA_0(s)-\GA_0(s')|.
} 
The following proposition shows that the set of instantaneous motions $\Phi_t$ giving rise to a rigid evolution  is, at each time, a linear sub-space of dimension 3 of $T_{\GA_t} \Bb$.

\begin{prop}\label{rigid-ch}
	The evolution~\eqref{eq-continuous-flow} satisfies~\eqref{eq-rigid-flow} if and only if $\Phi_t \in \Rr_{\GA_t}$ for all $t \in \RR$,  where  
	\eql{\label{eq-rigid-vector-space}
		\Rr_\GA = \enscond{ \Phi \in T_\Ga \Bb }{
                        \exists a \in \RR, \exists b \in \R^2,\;\;
			\foralls s \in \Circ, \; \Phi(s) =  a \, \rot{\GA(s)}  + b
		}\,.
	}
\end{prop}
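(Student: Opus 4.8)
The plan is to prove both implications by differentiating the rigidity constraint \eqref{eq-rigid-flow} in time and identifying the resulting pointwise condition on $\Phi_t$. I would begin with the ``if'' direction, which is the easier one: suppose $\Phi_t(s) = a_t\,\rot{\GA_t(s)} + b_t$ for each $t$. Then for any pair $(s,s')$, the vector $\delta(s,s') := \GA_t(s) - \GA_t(s')$ evolves by $\partial_t \delta = a_t\,\rot{\delta}$, since the translation part $b_t$ cancels and $\rot{\cdot}$ is linear. Hence $\frac{\d}{\d t}|\delta|^2 = 2\,\delta \cdot \partial_t\delta = 2a_t\,\delta\cdot\rot{\delta} = 0$ because $\rot{\cdot}$ is a rotation by $\pi/2$ and $x\cdot\rot{x}=0$ for all $x\in\RR^2$. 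Integrating in $t$ gives \eqref{eq-rigid-flow}. (One should note that this argument is local in $t$ and uses only that $\GA_t$ is $C^2$ so that the evolution is well-posed; actually $C^1$ suffices here.)

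For the ``only if'' direction, I would differentiate \eqref{eq-rigid-flow} in $t$: it gives
$$
\big(\GA_t(s)-\GA_t(s')\big)\cdot\big(\Phi_t(s)-\Phi_t(s')\big) = 0
\qquad \foralls (s,s')\in\Circ\times\Circ.
$$
Fix $t$ and write $\GA = \GA_t$, $\Phi = \Phi_t$. The key step is to turn this ``first-order'' relation into the affine form \eqref{eq-rigid-vector-space}. I would differentiate the displayed identity with respect to $s$ (using that $\GA$ is $C^2$ and that, along a rigid motion, $\Phi$ is as regular as $\GA$): this yields $\GA'(s)\cdot(\Phi(s)-\Phi(s')) = 0$ for all $s,s'$, i.e. $\Phi(s)-\Phi(s')$ is orthogonal to the tangent $\tgam(s)$ for every $s$, hence $\Phi(s)-\Phi(s')$ is parallel to $\ngam(s)$ for every fixed $s'$. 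Differentiating once more in $s$ and using the Frenet-type relations $\tgam' = |\GA'|\,\kappa_\GA\,\ngam$, $\ngam' = -|\GA'|\,\kappa_\GA\,\tgam$ (cf. \eqref{deriv-tang}--\eqref{deriv-norm} in the smooth case) gives $\GA'(s)\cdot\Phi'(s)=0$, so $\Phi'(s)\perp\tgam(s)$, i.e. $\Phi'(s) = c(s)\,\ngam(s)$ for some scalar function $c$. Differentiating $\GA'\cdot\Phi' = 0$ once more and substituting yields a relation forcing $c(s)/|\GA'(s)|$ to be constant; calling this constant $a$, we get $\Phi'(s) = a\,|\GA'(s)|\,\ngam(s) = a\,\rot{\GA'(s)}$ (since $\rot{\GA'(s)} = |\GA'(s)|\,\ngam(s)$ with the convention $(x,y)^\bot=(-y,x)$). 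Integrating, $\Phi(s) = a\,\rot{\GA(s)} + b$ for some $b\in\RR^2$; the closedness of $\Circ$ is automatically compatible since $\rot{\GA(\cdot)}$ and constants are $\Circ$-periodic.

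The main obstacle I anticipate is the regularity bookkeeping in the ``only if'' direction: to differentiate the rigidity identity twice in $s$ I need $\Phi_t$ to be $C^2$ in $s$, which is not assumed a priori — only $\GA_t\in C^2$ is given. The clean way around this is to avoid differentiating $\Phi$ directly: instead, from $\GA'(s)\cdot(\Phi(s)-\Phi(s'))=0$ for all $s,s'$, fix two parameters $s_1,s_2$ and observe that $\Phi(s)-\Phi(s_1)$ and $\Phi(s)-\Phi(s_2)$ are both multiples of $\ngam(s)$, hence so is their difference $\Phi(s_2)-\Phi(s_1)$ — wait, this is a constant vector equal to a multiple of $\ngam(s)$ for \emph{every} $s$, which (since $\ngam$ spans all directions as $s$ varies, on a non-degenerate arc) forces $\Phi(s_2)-\Phi(s_1)$ to vanish on any arc where $\ngam$ is non-constant, and a continuity/connectedness argument then pins down the global affine structure. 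Alternatively, and perhaps most cleanly, one can parametrize: the group of distance-preserving maps of $\RR^2$ fixing orientation is the $3$-dimensional group of rigid motions $x\mapsto \mathrm{Rot}_\theta x + b$, so \eqref{eq-rigid-flow} says $\GA_t = g_t\cdot\GA_0$ for a $C^1$ path $g_t$ in this group (orientation cannot jump by continuity, and the smooth non-degeneracy of $\GA_0$ rules out reflections); differentiating $g_t$ at fixed $t$ gives exactly $\Phi_t(s) = \dot\theta_t\,\rot{\GA_t(s)} + \dot b_t$, which is the claimed form with $a = \dot\theta_t$, $b=\dot b_t$. I would present this Lie-group argument as the main proof and relegate the direct computation to a remark, since it handles the regularity issue transparently.
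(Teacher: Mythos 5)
Your main argument---realizing the distance-preserving evolution as a $C^1$ path $g_t$ in the Euclidean group of the plane (an isometry being determined by the images of three non-collinear points, with degenerate curves treated separately) and reading off $\Phi_t(s)=a\,\rot{\GA_t(s)}+b$ from the Lie algebra $\RR^2\rtimes A(2)$---is exactly the proof given in the paper, and your ``if'' direction via $x\cdot\rot{x}=0$ is a correct, if redundant, addition.

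One caveat: the direct computation you propose to keep as a remark is flawed. Differentiating $(\GA(s)-\GA(s'))\cdot(\Phi(s)-\Phi(s'))=0$ with respect to $s$ produces \emph{two} terms, namely $\GA'(s)\cdot(\Phi(s)-\Phi(s'))+(\GA(s)-\GA(s'))\cdot\Phi'(s)=0$; you dropped the second one. The conclusion $\GA'(s)\cdot(\Phi(s)-\Phi(s'))=0$ is already false for a pure rotation: with $\GA$ the unit circle and $\Phi=\rot{\GA}$ one computes $\GA'(s)\cdot\rot{\GA(s)-\GA(s')}=1-\cos(s-s')\neq 0$. The subsequent chain of deductions (including the ``$\Phi(s_2)-\Phi(s_1)$ must vanish'' observation, which would wrongly force $\Phi$ to be constant) collapses with it, so either omit that remark or redo the computation keeping both terms.
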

\begin{proof}
Recall that the group of distance preserving transformations on $\R^d$ is the Euclidean group $E(d) = \R^d \rtimes  O_d(\R)$ and that any element of $E(d)$ is uniquely defined by the image of $d+1$ points in $\R^d$ which are affinely independent. Therefore, provided that the curve $\GA$ has at least three non-collinear points, $\Phi_t$ is the restriction of $g_t \in E(d)$, a path on $E(d)$ which is uniquely defined. In addition, $g_t$ and $\Phi_t$ have the same smoothness. Thus the result follows from the fact that the Lie algebra of $\R^d \rtimes  O_d(\R)$ is $\R^d \rtimes A(d)$, where $A(d)$ denotes the set of antisymmetric matrices.
The degenerate cases such as when the curve is contained in a line or a point are similar.
\end{proof}

Note that for numerical simulations, one replaces the continuous PDE~\eqref{eq-continuous-flow} by a flow discretized at times $t_k = k \tau$ for some step size $\tau>0$ and $k \in \NN$, 
\eq{
	\GA_{k+1} = \GA_k + \tau \Phi_k
	\qwhereq
	\Phi_k \in T_{\GA_k} \Bb.
} 
This is for instance the case of a gradient flow such as~\eqref{sequence} where $\Phi_k = - \nabla_{R_{\GA_k}} E(\GA_k)$. In this discretized setting, imposing $\Phi_k \in \Rr_{\GA_k}$ only guarantees that rigidity~\eqref{eq-rigid-flow} holds approximately and for small enough times $t_k$. 

The following proposition describes this set of tangent fields in an intrinsic manner (using only derivatives along the curve $\GA$), and is pivotal to extend $\Rr_\GA$ to piecewise-rigid tangent fields.

\begin{prop}\label{charactrigid}
	For a $C^2$-curve $\GA$, one has $\Phi \in \Rr_\GA$ if and only if $\Phi$ is $C^2$ and satisfies $L_\GA(\Phi) = 0$ and $H_\GA(\Phi) = 0$, where $L_\GA$ and $H_\GA$ are the following linear operators
\begin{equation}\label{eqrigid}
 	L_\GA(\Phi) \;=\;   \frac{\d\Phi}{\dgs}\, \cdot\, \tgam  
	\qandq  
	H_\GA(\Phi) \;=\;  \frac{\d^2\Phi}{\dgs^2} \, \cdot\, \ngam \,.
\end{equation}
From a geometric point of view, $L_\GA(\Phi)$ takes into account the length changes and $H_\GA(\Phi)$ the curvature changes.
\end{prop}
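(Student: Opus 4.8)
The plan is to rewrite everything in the moving frame $(\tgam,\ngam)$. Write $\partial := \frac{\d}{\d\GA}$ for the arc-length derivative, so that $\partial\Phi = \Phi'/|\GA'|$ and $\partial\GA = \tgam$, and use the Frenet relations $\partial\tgam = \kappa_\GA\,\ngam$ and $\partial\ngam = -\kappa_\GA\,\tgam$, which specialise \eqref{deriv-tang}--\eqref{deriv-norm} to the $C^2$ case (they also follow directly by differentiating $\tgam\cdot\tgam=1$, $\ngam\cdot\ngam=1$ and $\tgam\cdot\ngam=0$). Note that for $\GA\in C^2$ with $|\GA'|>0$ one has $|\GA'|\in C^1$, hence $\tgam=\GA'/|\GA'|$ and $\ngam=\tgam^\bot$ are $C^1$ and $\kappa_\GA$ is continuous; alternatively one may reduce at the outset to the constant-speed parameterization, which is legitimate since $\Rr_\GA$, $L_\GA$ and $H_\GA$ are all reparameterization-invariant.

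\emph{Direct implication.} If $\Phi = a\,\rot{\GA} + b$ with $a\in\RR$, $b\in\RR^2$, then $\Phi\in C^2$ because $\rot{\cdot}$ is linear and $\GA\in C^2$. Since $\rot{\cdot}$ commutes with $\partial$, we get $\partial\Phi = a\,\rot{\partial\GA} = a\,\rot{\tgam} = a\,\ngam$, so $L_\GA(\Phi) = \partial\Phi\cdot\tgam = 0$; differentiating once more, $\partial^2\Phi = a\,\partial\ngam = -a\,\kappa_\GA\,\tgam$, so $H_\GA(\Phi) = \partial^2\Phi\cdot\ngam = 0$.

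\emph{Converse.} Let $\Phi\in C^2$ satisfy $L_\GA(\Phi) = H_\GA(\Phi) = 0$. Decomposing $\partial\Phi$ in the orthonormal frame and using $L_\GA(\Phi)=\partial\Phi\cdot\tgam = 0$ gives $\partial\Phi = \mu\,\ngam$ with $\mu := \partial\Phi\cdot\ngam \in C^1$ (since $\partial\Phi,\ngam\in C^1$). Then $\partial^2\Phi = (\partial\mu)\,\ngam - \mu\,\kappa_\GA\,\tgam$, so $0 = H_\GA(\Phi) = \partial\mu$, and as $\Circ$ is connected, $\mu\equiv a$ for some constant $a\in\RR$. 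Hence $\partial\Phi = a\,\ngam = a\,\rot{\tgam} = \partial(a\,\rot{\GA})$, so $\partial\bigl(\Phi - a\,\rot{\GA}\bigr) = 0$; using again that $\Circ$ is connected, $\Phi - a\,\rot{\GA} = b$ for some $b\in\RR^2$, i.e.\ $\Phi\in\Rr_\GA$.

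The computations are immediate once the frame formalism is in place; the only step requiring care is the regularity bookkeeping, namely checking that $\GA\in C^2$ together with the immersion condition $|\GA'|>0$ makes $\tgam,\ngam$ of class $C^1$ and $\kappa_\GA$ continuous, so that $\mu = \partial\Phi\cdot\ngam$ is differentiable and the second-order Frenet identity used in the converse is valid. This is precisely where the hypotheses of the proposition enter, and it is what one must take care of when later extending $L_\GA,H_\GA$ to the $BV^2$ setting.
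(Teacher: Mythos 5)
Your proof is correct and follows essentially the same route as the paper's: both reduce to the observation that $L_\GA(\Phi)=0$ forces $\frac{\d\Phi}{\d\GA}=\mu\,\ngam$, and that the Frenet relation $\frac{\d\ngam}{\d\GA}=-\kappa_\GA\tgam$ turns $H_\GA(\Phi)$ into the derivative of $\mu$, so that the two conditions together say $\frac{\d\Phi}{\d\GA}=a\,\ngam$ for a constant $a$, which integrates to $a\,\rot{\GA}+b$. The only difference is presentational (the paper writes a chain of equivalences, you write the two implications separately and track regularity slightly more explicitly), with no mathematical divergence.
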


\begin{proof}
Using the parameterization of $\Ga$, any such deformation $\Phi$ satisfies 
\begin{equation}
	\label{eq:rigidg}
	\exists\, !\; (a,b) \in \R\times\R^2, \quad \foralls s \in [0,1], \quad \Phi(s) \; = \;  a\rot{\Ga(s)}  + b \,.
\end{equation}
By differentiation with respect to \! $s$, this is equivalent to 
\eq{ 
	\exists\, !\; a \in \R,\quad \foralls s \in [0,1], \quad \frac{\d\Phi}{\d s}(s) \; = \; a|\GA'(s)|\; \ngam(s)
}
which can be rewritten as $\frac{\d\Phi}{\d\GA} (s) =  a \ngam(s) $ by differentiating with respect to \! the length element $\d\GA = \norg\,\d s$,
or simply as $ \frac{\d\Phi}{\d s}(s)  = a \ngam(s) $ 
by considering an arc-length parameterization. 
This is equivalent to 
\eq{
	\exists\, !\; a \in \R,\quad \foralls s \in [0,1], \quad 
	\begin{cases} 
	\displaystyle \;\; \frac{\d\Phi}{\dgs}\, \cdot\, \tgam(s) \; = \; 0 \vspace{2mm} \\
	\displaystyle \;\; \frac{\d\Phi}{\dgs}\, \cdot\, \ngam(s) \; = \; a
	\end{cases}
}
which is  equivalent to
\eq{
	\begin{cases}
		\displaystyle \;\; \frac{\d\Phi}{\dgs}\, \cdot\, \tgam \; = \; 0 \vspace{2mm} \\
		\displaystyle \;\; \frac{\d}{\dgs} \left( \frac{\d\Phi}{\dgs} \, \cdot\, \ngam \right) \; =\; 0
	\end{cases}
}
and, using that 
\eq{ 
	\frac{\d}{\dgs} \left( \frac{\d\Phi}{\dgs} \, \cdot\, \ngam \right) \; =\;  
	\frac{\d^2\Phi}{\dgs^2} \, \cdot\, \ngam \; - \frac{\d\Phi}{\dgs} \, \cdot\, \kappa_\Ga \,\tgam , 
}
where $\kappa_\Ga$ is the curvature of $\GA$, we obtain the desired characterization.
\end{proof}

\subsection{Piecewise rigid $BV^2$ deformations }\label{BV2M}

This section extends the globally rigid evolution considered in the previous section to piecewise-rigid evolution.
\par In the smooth case considered in the previous section, this corresponds to imposing that an instantaneous deformation $\Phi \in T_\GA \Bb$  satisfies~\eqref{eqrigid} piecewisely for possibly different pairs $(a,b)$ on each piece.
To generate a piecewise-smooth Finsler gradient $\Phi = \nabla_{R_\GA} E(\GA)$ (as defined in~\eqref{defgrad}) that is a piecewise rigid deformation, one should design a penalty $R_\GA$ that satisfies this property. This is equivalent to imposing  
$L_\GA(\Phi) = 0$ and $H_\GA(\Phi) = 0$ for all $s \in [0,1]$ except for a finite number of points (the articulations between the pieces). In particular, note that $L_\GA(\Phi)$ is undefined at these points, while $H_\GA(\Phi)$ is the sum of Dirac measures concentrated at the  articulation points (due to the variations of $a$). This suggests that, in the smooth case, we can favor piecewise rigidity by minimizing $\left\| H_\GA(\Phi) \right\|_{L^1(\Ga)}$ under the constraint $L_\GA(\Phi) = 0 \;\; a.e.$, so that  we control the jumps of the second derivative without setting in advance the articulation points. Note also that the minimization of the $L^1$-norm favors sparsity and, in contrast to the $L^2$-norm, it enables the emergence of Dirac measures.
\par In order to extend such an idea to the $BV^2$-framework we remind that 
$$\left\| H_\GA(\Phi) \right\|_{L^1(\Ga)}=TV_\GA\left(\frac{\d \Phi}{\d\GA}\cdot \ngam\right)\quad \forall \,\Phi\in C^{2}(\Circ,\RR^2),\, \GA \in C^2(\Circ, \RR^2)$$
which defines a suitable penalty in the $BV^2$-setting.
Moreover, since we are interested in piecewise rigid motions, we  deal with curves that could be not  $C^1$ at some points $s$. It is useful to introduce the following operators
\begin{align}\label{eq-operator-L}
	L_\GA^+(\Phi)(s) &=	\underset{\underset{t\in(s,s+\epsilon)}{t\rightarrow s}}{\lim} 
		\frac{\d\Phi}{\dgs}(t) \cdot\, \tgam(t)\,, \\
	L_\GA^-(\Phi)(s) &=\underset{\underset{t\in(s-\epsilon,s)}{t\rightarrow s}}{\lim} 
		\frac{\d\Phi}{\dgs}(t) \cdot\, \tgam(t)\,.
\end{align}
Of course if $\GA$ and $\Phi$ are $C^1$ at $s$ we have $L_\GA^+(\Phi)(s)=L_\GA^-(\Phi)(s)=L_\GA(\Phi)(s)$. 
The next definition introduces a penalty for piecewise rigid evolution in $\Bb$.

\begin{defn}[\BoxTitle{$\BVD$ Piecewise-rigid penalty}]\label{defn1}
	For $\Ga \in \Bb$ and $\Phi \in T_\GA\Bb = \BVDga$, we define
	\begin{equation}\label{eq-piecewise-rigid-penalty}	
		R_\Ga(\Phi) =
			TV_\GA\left(\frac{\d \Phi}{\d\GA}\cdot \ngam\right)
			 + \iota_{\Cc_\Ga}(\Phi)
	\end{equation}
where $\iota_{\Cc_\Ga}$ is the indicator  function of $\Cc_\Ga$ 
$$
\displaystyle{\iota_{\Cc_\Ga}(\Phi)=
\left\{
\begin{array} {ll}
0 &\text{if } \Phi \in \Cc_\Ga\\
+\infty &\text{otherwise }  
\end{array}\right.}
\,.$$
Note that~\eqref{TV} is the total variation of $f$ with respect to the measure $\d\GA$. We remind  that $TV_\GA(f)=|Df|(\Circ)$ for every $f\in L^1(\Circ,\RR^2)$.
\par The set $\Cc_\Ga$ is defined as follows
\begin{equation}\label{eq-constr-C-Ga}
	\Cc_\Ga = \enscond{\Phi\in T_\GA\Bb }{L^+_\GA(\Phi)=L^-_\GA(\Phi)=0 }\,.
\end{equation}
\end{defn}
In order to define the Finsler gradient  we consider a constraint on the normal component of the deformation field.

\begin{defn}[\BoxTitle{Deviation constraint}]\label{defn-dev-constr} For $\GA \in \Bb$, we define
\eql{\label{eq-dev-constr}
	\Ll_\GA =\enscond{
		 \Phi\in T_\Ga\Bb
	}{ 
		\normbig{ \Pi_\GA( \nablad E(\Ga) -\Phi )}_{W^{1,2}(\Ga)} \leq \rho \normbig{\Pi_\GA(\nablad E(\Ga))  }_{W^{1,2}(\Ga)}
	}\,.
}
\end{defn}

Here, $\rho\in (0,1)$ is called the rigidification parameter, and controls the deviation of the Finsler gradient from the $W^{1,2}$ gradient. $\Pi_\GA$ is the projector introduced in equation~\eqref{normal-proj}.
\par We point out that in the applications studied in this paper we consider an intrinsic energy $E$ (i.e., it does not depend on reparameterization). In this case  the $W^{1,2}$-gradient of $E$ is normal to the curve, so that  $\Ll_\GA$ satisfies condition~\eqref{cond-constraints} in the case of an intrinsic energy, and  it can be used to define a valid Finsler gradient.

 Using these specific instantiations for $R_\GA$ and $\Ll_\GA$, Definition~\ref{FinslerGrad} reads in this $\BVD$ framework as follows.

\begin{defn}[\BoxTitle{$\BVD$ Piecewise-rigid Finsler gradient}] We define
\begin{equation}\label{grad-rigid}
	\nabla_{R_\GA} E(\GA) \in \uargmin{\Phi \in \Ll_\GA}\;  
   R_\GA(\Phi).
\end{equation}
\end{defn}

The following result  ensures the existence of a Finsler gradient. To prove  it we consider the space $\Bb$ equipped with the weak* toplogy.

\begin{thm}\label{existence1} 
	Problem~\eqref{grad-rigid}  admits at least a solution.
	
\end{thm}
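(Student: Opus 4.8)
The plan is to apply the abstract existence result Theorem~\ref{general-existence} to the specific penalty $R_\GA$ of Definition~\ref{defn1} and the constraint set $\Ll_\GA$ of Definition~\ref{defn-dev-constr}, using the weak* topology on $\Bb = BV^2(\Circ,\RR^2)$. The three hypotheses to verify are: (i) the weak* topology has the stated sequential compactness property for bounded sequences; (ii) $R_\GA$ is coercive and weak*-lower semicontinuous; (iii) $\Ll_\GA$ is weak*-closed. First I would recall (citing the Appendix, where the $BV$/$BV^2$ background is collected) that bounded sets of $BV^2(\Circ,\RR^2)$ are sequentially weak* precompact, which settles (i) and shows the topology $\mathcal{T}(T_\GA\Bb)$ required by Theorem~\ref{general-existence} exists.

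Next I would address the coercivity and lower semicontinuity of $R_\GA(\Phi) = TV_\GA\!\big(\tfrac{\d\Phi}{\d\GA}\cdot\ngam\big) + \iota_{\Cc_\GA}(\Phi)$. Since by~\eqref{cond-derivative} the norms of $BV^2(\GA)$ and $BV^2(\Circ,\RR^2)$ are equivalent, coercivity cannot come from the $TV^2$-type term alone in general; the point is that coercivity only needs to be checked on the intersection with $\Ll_\GA$, which already bounds the normal component of $\Phi$ in $W^{1,2}(\GA)$ near $\nablad E(\GA)$, so on the feasible set $\Phi$ has bounded $W^{1,1}$ part in the normal direction, the tangential part is governed by the constraint $L^\pm_\GA(\Phi)=0$, and $TV_\GA(\tfrac{\d\Phi}{\d\GA}\cdot\ngam)$ large forces $\|\Phi\|_{BV^2(\GA)}$ large — this is exactly the kind of argument the authors would spell out, combining the $\Ll_\GA$ bound with the $TV$ term to dominate the full $BV^2$ norm on feasible $\Phi$. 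Weak*-lower semicontinuity of the total-variation term is standard (lower semicontinuity of total variation under weak* convergence, applied to the measure $D\big(\tfrac{\d\Phi}{\d\GA}\cdot\ngam\big)$); one must be a little careful that $\ngam$ is a fixed $BV$ function (depending only on $\GA$, not on $\Phi$), so multiplication by it is weak*-continuous enough to preserve the lsc.

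Then I would check that $\Ll_\GA$ is weak*-closed: it is defined by the single inequality $\|\Pi_\GA(\nablad E(\GA)-\Phi)\|_{W^{1,2}(\GA)} \le \rho\,\|\Pi_\GA(\nablad E(\GA))\|_{W^{1,2}(\GA)}$, a closed convex condition for the $W^{1,2}(\GA)$ norm; since $BV^2$-weak* convergence implies convergence in $W^{1,2}(\GA)$ (by compact embedding of $BV^2(\Circ,\RR^2)$ into $W^{1,2}(\Circ,\RR^2)$, again using~\eqref{cond-derivative} for norm equivalence), the constraint passes to the limit, and similarly the linear constraints $L^\pm_\GA(\Phi)=0$ defining $\Cc_\GA$ are preserved. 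Finally, to invoke Theorem~\ref{general-existence} I must also observe $R_\GA \not\equiv +\infty$, i.e.\ $\Cc_\GA \cap \Ll_\GA \neq \varnothing$: the $W^{1,2}(\GA)$-gradient $\nablad E(\GA)$ itself lies in $\Ll_\GA$ (distance zero $\le \rho\|\cdot\|$), and since $E$ is intrinsic its gradient is normal, hence its tangential derivative vanishes and it lies in $\Cc_\GA$ as well, so the feasible set is nonempty with finite $R_\GA$. Putting these together, Theorem~\ref{general-existence} yields a minimizer. The main obstacle I anticipate is the coercivity step: the penalty controls only the total variation of the scalar quantity $\tfrac{\d\Phi}{\d\GA}\cdot\ngam$, so one genuinely needs the constraint set $\Ll_\GA$ (bounding the normal $W^{1,2}$ part) together with $\Cc_\GA$ (killing the tangential first derivative) to recover control of the full $BV^2(\GA)$ norm — getting the tangential second-derivative contribution under control from $L^\pm_\GA(\Phi)=0$ plus the normal-direction bound is the delicate point that must be argued carefully rather than dismissed as routine.
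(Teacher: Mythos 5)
Your overall strategy --- the direct method in the $BV^2$ weak* topology, checking sequential compactness, lower semicontinuity of $R_\GA$, and weak*-closedness of $\Ll_\GA\cap\Cc_\GA$ --- is the same as the paper's, and those parts of the plan are sound. The genuine gap is exactly the step you flag at the end and then leave unargued: the coercivity estimate. Observe that $\Ll_\GA$ only bounds $\Pi_\GA(\Phi)$, the \emph{normal component of $\Phi$ itself}, in $L^2(\GA)$; it says nothing about the tangential component of $\Phi$, while $L^\pm_\GA(\Phi)=0$ kills the tangential component of $\frac{\d\Phi}{\d\GA}$, not of $\Phi$. The paper's Lemma~\ref{lem-compacity} closes this by integrating the constraint: writing $\frac{\d\Phi}{\d\GA}\cdot\ngam=u+a$ with $u(s_0)=0$, membership in $\Cc_\GA$ yields $\Phi(s)=\Phi(s_0)+a[\GA(s)-\GA(s_0)]^\bot+\int_{s_0}^{s}u\,\ngam\,\d\GA$, and the $L^2$ bound on the normal projection then controls the pair $(|\Phi(s_0)|,a)$ through a $2\times2$ bilinear form $A(e,s_0)$ whose non-degeneracy must be proved. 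This is where your plan actually breaks: that form is degenerate precisely when $\GA$ is a circle, and in that case the feasible set $\Ll_\GA\cap\Cc_\GA$ contains the unbounded one-parameter family of tangential rotation fields, on which $R_\GA$ is constant and $\Pi_\GA$ vanishes; so $R_\GA+\iota_{\Ll_\GA\cap\Cc_\GA}$ is genuinely \emph{not} coercive and Theorem~\ref{general-existence} cannot be invoked as stated. The paper therefore treats the circle separately (Lemma~\ref{lem-compacity-circle}) and normalizes the minimizing sequence by subtracting $(\Phi_h(s_0)\cdot\tgam(s_0))\tgam$, which preserves $R_\GA$, $\Ll_\GA$ and $\Cc_\GA$ and restores the bound. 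A further nontrivial step you omit is the bound on the second variation, obtained via the $BV$ product rule applied to $\frac{\d\Phi}{\d\GA}=\bigl(\frac{\d\Phi}{\d\GA}\cdot\ngam\bigr)\ngam$, which brings in the generalized curvature and requires the $L^\infty$ bound on $\frac{\d\Phi}{\d\GA}\cdot\ngam$ produced by the bilinear-form argument.

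A secondary but concrete error: your nonemptiness argument is wrong. A normal field $\Phi=\phi\,\ngam$ satisfies $\frac{\d\Phi}{\d\GA}\cdot\tgam=-\phi\,\mathrm{curv}_\GA$, which does not vanish for a closed curve unless $\phi$ vanishes wherever the curvature does not; so the normality of $\nabla_{W^{1,2}(\GA)}E(\GA)$ does not place it in $\Cc_\GA$. (The paper itself does not dwell on nonemptiness, but your specific justification does not work; rigid fields $a\GA^\bot+b$ are the natural candidates for elements of $\Cc_\GA$.)
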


In order to prove the theorem, we need the following lemmas. They guarantee in particular the compactness of minimizing sequences with respect to the $BV^2$-weak* topology. The proof relies on the evaluation of a bilinear form which is degenerate if the curve is a circle, so we treat the case of the circle separately. 

\begin{lem}\label{lem-compacity}
Let  $\GA\in \mathcal{B}$ be an injective curve.  We suppose that $\GA$ is different from a circle. Then there exists a constant $C(\GA)$ depending on $\GA$ such that 
\begin{equation}\label{compactnessR-1}
	 	\norm{ \Phi }_{BV^{2}(\GA)} \leq 
		C(\GA)\left( (1+\rho) \|\Pi_\GA(\nabla_{W^{1,2}(\Ga)}E(\GA))\|_{W^{1,2}(\Ga)} + R_\GA(\Phi) \right) \quad \forall\,\Phi \in \Ll_\GA \cap \Cc_\GA
\end{equation}
where $\Pi_\GA$ is the operator defined in \eqref{normal-proj}.	
\end{lem}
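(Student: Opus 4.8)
The goal is to control the full $BV^2(\GA)$-norm of $\Phi$ — that is, $\|\Phi\|_{L^1(\GA)} + \|\frac{\d\Phi}{\d\GA}\|_{L^1(\GA)} + TV^2_\GA(\Phi)$ — by the two quantities on the right-hand side. The natural strategy is to decompose $\Phi$ into its tangential and normal components, $\Phi = \Sigma_\GA(\Phi) + \Pi_\GA(\Phi)$, and to estimate each piece separately, exploiting the fact that $\Phi \in \Cc_\GA$ forces the tangential part to be rigid (constant-like along the curve after suitable integration) and that $\Phi \in \Ll_\GA$ pins down the normal part near $\nablad E(\GA)$.

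The plan is as follows. First I would handle $TV^2_\GA(\Phi)$. Since $\Phi$ is $BV^2$, one has $D_\GA(\frac{\d\Phi}{\d\GA}) = D_\GA(\frac{\d\Phi}{\d\GA}\cdot\tgam)\,\tgam + D_\GA(\frac{\d\Phi}{\d\GA}\cdot\ngam)\,\ngam$ plus curvature cross-terms coming from \eqref{deriv-tang}--\eqref{deriv-norm}; on $\Cc_\GA$ the tangential scalar component $\frac{\d\Phi}{\d\GA}\cdot\tgam$ vanishes (its one-sided limits are zero), so that part of $TV^2_\GA$ is controlled purely by $|\mathrm{curv}_\GA|(\Circ)$ times lower-order terms, while the normal scalar component contributes exactly $TV_\GA(\frac{\d\Phi}{\d\GA}\cdot\ngam)$, which is the first term of $R_\GA(\Phi)$. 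This gives a bound on $TV^2_\GA(\Phi)$ by $R_\GA(\Phi)$ plus $C(\GA)\,\|\frac{\d\Phi}{\d\GA}\|_{L^1(\GA)}$. Next, to control $\|\frac{\d\Phi}{\d\GA}\|_{L^1(\GA)}$, I would integrate: writing $\frac{\d\Phi}{\d\GA} = (\frac{\d\Phi}{\d\GA}\cdot\ngam)\ngam$ on the support (tangential part null), the scalar function $\psi := \frac{\d\Phi}{\d\GA}\cdot\ngam$ has $TV_\GA(\psi) \le R_\GA(\Phi)$, so $\|\psi\|_{L^1(\GA)}$ is bounded by $TV_\GA(\psi)$ plus the mean value $\bar\psi$ of $\psi$. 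The Poincaré-type inequality reduces everything to estimating this single scalar $\bar\psi$, i.e. essentially one real degree of freedom — the "global rotation rate" $a$ — plus the translation constant hidden in $\Phi$ itself. Finally $\|\Phi\|_{L^1(\GA)}$ follows by integrating $\frac{\d\Phi}{\d\GA}$ once more, again up to an additive constant (the translation $b$). So the whole estimate collapses to bounding three scalars: the mean of $\psi$ and the two components of an additive vector constant.

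The crux, and where the hypothesis "$\GA$ is not a circle" enters, is closing this finite-dimensional gap using the deviation constraint $\Ll_\GA$. The constraint $\|\Pi_\GA(\nablad E(\GA) - \Phi)\|_{W^{1,2}(\GA)} \le \rho\|\Pi_\GA(\nablad E(\GA))\|_{W^{1,2}(\GA)}$ controls $\Pi_\GA(\Phi)$ in $W^{1,2}(\GA)$, hence controls a weighted combination of $\Phi\cdot\ngam$ and its derivative. I would project this information onto the three-dimensional space $\Rr_\GA$ of rigid motions (spanned by $s\mapsto \rot{\GA(s)}$ and the two constant fields): the normal components of these three generators form a frame, and the $W^{1,2}(\GA)$-control of $\Pi_\GA(\Phi)$ gives control of the coordinates of $\Phi$ in this frame — provided the Gram matrix of $\{\Pi_\GA(\rot{\GA}), \Pi_\GA(e_1), \Pi_\GA(e_2)\}$ in $L^2(\GA)$ (or $W^{1,2}(\GA)$) is invertible. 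This Gram/bilinear form is precisely the one alluded to in the lemma's preamble: it degenerates exactly when $\GA$ is a circle (since for a circle $\rot{\GA(s)}$ is, up to the center, normal to the curve and proportional to $\ngam$, making the rigid motions' normal components linearly dependent — the rotation becomes indistinguishable from a reparameterization of length). Thus for non-circular injective $\GA$ the bilinear form is nondegenerate, yielding a constant $C(\GA)$ (blowing up as $\GA$ approaches a circle), which is exactly the form of the claimed inequality. I expect this nondegeneracy argument — showing the Gram matrix is invertible and extracting an explicit lower bound in terms of $\GA$ — to be the main obstacle; the rest is bookkeeping with the $BV^2$ chain rule and Poincaré inequalities.
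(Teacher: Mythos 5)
Your plan follows essentially the same route as the paper's proof: the constraint $\Cc_\GA$ forces $\frac{\d\Phi}{\d\GA}=(\frac{\d\Phi}{\d\GA}\cdot\ngam)\ngam$, which upon integration reduces everything, modulo $R_\GA(\Phi)$ and a Poincar\'e-type bound, to three scalar unknowns (the rotation rate $a$ and a translation vector) that are recovered from the $\Ll_\GA$ constraint by inverting the quadratic form measuring the $L^2(\GA)$-size of the normal projection of a rigid motion --- non-degenerate precisely because $\GA$ is closed, injective and not a circle --- and the second-variation bound via the $BV$ product rule and the curvature measure $|{\rm curv}_\GA|(\Circ)\le|D^2\GA|(\Circ)$ is identical. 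The paper merely packages the non-degeneracy step as a $2\times 2$ form in $(|\Phi(s_0)|,a)$ parametrized by $s_0$ and the direction $e=\Phi(s_0)/|\Phi(s_0)|$ rather than your $3\times 3$ Gram matrix of $\{\Pi_\GA(\rot{\GA}),\Pi_\GA(e_1),\Pi_\GA(e_2)\}$; the only slip in your wording is that for a circle centered at $c$ the field $\rot{\GA(s)-c}$ is \emph{tangential} (proportional to $\tgam$), not normal, which is exactly why its normal projection vanishes and the form degenerates.
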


\begin{proof} 
The proof is essentially based on   estimation~\eqref{bound-first-der} giving a bound for the $L^\infty$-norms of the deformation $\Phi$ and its first derivative. We also remark that, as $\Phi\in \Ll_\GA$, we have
\begin{equation}\label{l1-bound}
\norm{ \Pi_\GA(\Phi) }_{
L^{2}(\Ga)} \leq (1+\rho)\|\Pi_\GA(\nabla_{W^{1,2}(\Ga)}E(\GA))\|_{W^{1,2}(\Ga)}\,.
\end{equation}
In the following we denote by $l(\GA)$ the length of the curve $\GA$. 

{\bf Bound on the first derivative}. In this section  we prove the following   estimate for the $L^\infty$-norms of  $\frac{\d\Phi}{\d\GA}\cdot \ngam $ and $\Phi$:
\begin{equation}\label{bound-first-der}
	\mbox{max}\left\{
		\left\|\Phi\right\|_{L^{\infty}(\GA)}\,;\,
		\left\|\frac{\d \Phi}{\d\GA}\cdot\ngam\right\|_{L^{\infty}(\GA)}
	\right\}
	\leq C_0(\GA)\left((1+\rho)\|\Pi_\GA(\nabla_{W^{1,2}(\Ga)}E(\GA))\|_{W^{1,2}(\Ga)}+ R_\GA(\Phi)\right)\,
\end{equation}
where $C_0(\GA)$ depends on  $\GA$. 
\par  Let $s_0\in \Circ$, we can write
$$\frac{\d\Phi}{\d\GA}\cdot \ngam = u + a$$
where $u\in BV(\GA)$ such that $u(s_0)=0$  and $a=\frac{\d\Phi}{\d\GA}(s_0)\cdot \ngam(s_0)\in \R$. As $\Phi\in\Cc_\GA$ we have $L_\GA^+(\Phi)=L_\GA^-(\Phi)=0$, which implies
$$\frac{\d\Phi}{\d\GA}=\left(\frac{\d\Phi}{\d\GA}\cdot \ngam\right)\ngam$$
and
\begin{equation}\label{writing-phi}
\Phi(s)= \Phi(s_0) + a[\GA(s)-\GA(s_0)]^\bot+ \int_{s_0}^s u\ngam \d \GA(s) \,\quad \forall \, s\in \Circ\,.
\end{equation}
Now, by projecting on the normal to $\GA$, we can write 
\begin{equation}\label{decomp-xi}\Pi_\GA(\Phi)=\Pi_\GA(\Phi(s_0)+ a[\GA(s)-\GA(s_0)]^\bot)+\Pi_\GA\left(\int_{s_0}^s u\ngam \d \GA(s)\right)\,.
\end{equation}
In particular, by the properties of good representatives for  $BV$-functions of one variable (see~\cite{AFP} p. 19), we have 
$$
|u(s)|=|u(s)-u(s_0)|\leq TV_\GA(u)\quad \forall s\in \Circ
$$
which implies that
\begin{equation}\label{first-bv0}
\left\|\int_{s_0}^s u\ngam \d \GA(s)\right\|_{L^\infty(\GA)}\leq
l(\GA) TV_\GA(u)= l(\GA) R_\GA(\Phi)\,
 \end{equation}
 and
 \begin{equation}\label{first-bv0-l2}
\left\|\int_{s_0}^s u\ngam \d \GA(s)\right\|_{L^2(\GA)}\leq
l(\GA)^{3/2} R_\GA(\Phi)\,
 \end{equation}
Thus, by~\eqref{l1-bound},~\eqref{first-bv0-l2}, and~\eqref{decomp-xi} it follows that
\begin{equation}\label{bound-xi}
\|\Pi_\GA(\Phi(s_0)+ a[\GA(s)-\GA(s_0)]^\bot)\|_{L^2(\GA)}\leq (1+\rho)\|\Pi_\GA(\nabla_{W^{1,2}(\Ga)}E(\GA))\|_{W^{1,2}(\Ga)}+ l(\GA)^{3/2}R_\GA(\Phi)\,.
\end{equation}
\par We remark now that $\|\Pi_\GA(\Phi(s_0)+ a[\GA(s)-\GA(s_0)]^\bot)\|_{L^2(\GA)}^2$ can be written as   
\begin{equation}\label{bilinear}
\|\Pi_\GA(\Phi(s_0)+a[\GA(s)-\GA(s_0)]^\bot)\|_{L^2(\GA)}^2=(|\Phi(s_0)|, a)\cdot A\left(\frac{\Phi(s_0)}{|\Phi(s_0)|},s_0\right)
\begin{pmatrix}
|\Phi(s_0)|\\
a\\
\end{pmatrix}
\end{equation}
where, for any $e\in \Circ\subset \RR^2$ and $s_0\in \Circ$, the matrix $A(e,s_0)$ is defined by
\begin{equation}
\begin{pmatrix}
\int_{\Circ} \left(e\cdot\ngam\right)^2\d \GA(s)&\int_{\Circ}\left([\GA(s)-\GA(s_0)]^\bot\cdot\ngam\right)\left(e\cdot\ngam\right) \d \GA(s)\\
\int_{\Circ}\left([\GA(s)-\GA(s_0)]^\bot\cdot\ngam\right)\left(e\cdot\ngam\right) \d \GA(s)&\int_{\Circ} \left([\GA(s)-\GA(s_0)]^\bot\cdot\ngam\right)^2\d \GA(s)\\
\end{pmatrix}
.
\end{equation}

\par Note that the bilinear form defined by $A(e,s_0)$ is degenerate if and only if the determinant of $A(e,s_0)$ is zero which means that there exists  $\alpha\in\R$ such that 
$(e- \alpha[\GA(s)-\GA(s_0)]^\bot)\cdot \ngam=0$ for every $s\in \Circ$. Note that this  implies that $\GA$ is either a circle or a line. Now, as we work with closed injective curves $\GA$ is different from a line. Then, because of the hypothesis on $\GA$, we get that for every $s_0\in \Circ$ the   bilinear form associated with $A(e,s_0)$  is not degenerate.

\par In particular the determinant of $A$ is positive which means that the bilinear form  is positive-definite.  This implies  that its smallest eigenvalue is positive and in particular, by a straightforward calculation, it can be written as $\lambda\left(e,s_0\right)$ where   $\lambda:\Circ\rightarrow \R$ is a positive continuous function. Then, we have
\begin{equation}\label{case1}
\underset{e,s_0\in \Circ}{\inf}\,\lambda(e, s_0)(|\Phi(s_0)|^2+a^2) \leq \lambda\left(\frac{\Phi(s_0)}{|\Phi(s_0)|},s_0\right)(|\Phi(s_0)|^2+a^2)\leq \|\Pi_\GA(\Phi(s_0)+ a\GA(s)^\bot)\|_{L^2(\GA)}^2\,
\end{equation}
where the infimum of $\lambda$ on $\Circ\times\Circ$ is a positive constant depending only on $\GA$ and denoted by $\lambda_\GA$. 
\par The previous relationship and~\eqref{bound-xi} prove that, for every $s_0\in \Circ$, we have 
\begin{equation}\label{norme-infty}
	\mbox{max}\left\{|\Phi(s_0)|,a\right\}
	\leq C_0(\GA)\left((1+\rho)\|\Pi_\GA(\nabla_{W^{1,2}(\Ga)}E(\GA))\|_{W^{1,2}(\Ga)}+ R_\GA(\Phi)\right)\,
\end{equation}
where $C_0(\GA)=\mbox{max}\{1/\lambda_\GA, l(\GA)^{3/2}/\lambda_\GA\}$ depends only on $\GA$.

Then, because of the arbitrary choice of $s_0$ and the definition of $a$ ($a=\frac{\d\Phi}{\d\GA}(s_0)\cdot \ngam(s_0)$),~\eqref{norme-infty}      implies~\eqref{bound-first-der}. In particular~\eqref{bound-first-der}  gives a bound for the $W^{1,1}(\GA)$-norm of $\Phi$. 

{\bf Bound on the second variation}. 
We have
$$ TV^2_\GA(\Phi) = TV_\GA\left(\frac{\d\Phi}{\d\GA}\right)\,.$$ 
Now,  $\frac{\d \Phi}{\d\GA}=\left(\frac{\d \Phi}{\d\GA}\cdot \ngam\right)\ngam\in BV(\GA)$ and, by the generalization of the  product rule  to  $BV$-functions (see Theorem 3.96, Example 3.97, and Remark 3.98 in~\cite{AFP}), we get 
$$
TV_\GA\left(\frac{\d \Phi}{\d\GA}\right)\leq 2\left(TV_\GA\left(\frac{\d \Phi}{\d\GA}\cdot \ngam\right) + \left\|\frac{\d \Phi}{\d\GA}\cdot\ngam\right\|_{L^\infty(\GA)}TV_\GA(\ngam)\right) \,.
$$
The constant 2 in the previous inequality comes from the calculation of the total variation on the intersection of the jump sets of $\left(\frac{\d \Phi}{\d\GA}\cdot \ngam\right)$ and $\ngam$ (see Example 3.97 in~\cite{AFP}).
Note also that $TV_\GA(\ngam)=|D \ngam|(\Circ)$. 
\par Then, by~\eqref{deriv-norm} and~\eqref{curvature}, we get 
\begin{equation}\label{estimate-second-var0}
\begin{array}{ll}
\displaystyle{TV_\GA\left(\frac{\d \Phi}{\d\GA}\right)}&\displaystyle{\leq 2\left(TV_\GA\left(\frac{\d \Phi}{\d\GA}\cdot \ngam\right) + |\rm{curv}_\GA|(\Circ)\left\| \frac{\d \Phi}{\d\GA}\cdot\ngam\right\|_{L^\infty(\GA)}\right)}\\
&\displaystyle{\leq  2\left(TV_\GA\left(\frac{\d \Phi}{\d\GA}\cdot \ngam\right) + |D^2\GA|(\Circ)\left\| \frac{\d \Phi}{\d\GA}\cdot\ngam\right\|_{L^\infty(\GA)}\right)}\,\\
\end{array}
\end{equation}
which implies that 
\begin{equation}\label{estimate-second-var}
TV_\GA^2\left(\Phi\right)\leq C_1(\GA)\left(R_\GA(\Phi) + \left\|\frac{\d \Phi}{\d\GA}\cdot\ngam\right\|_{L^\infty(\GA)}\right)\,.\\
\end{equation}
where $C_1(\GA)$ is a constant depending on $\GA$.

The Lemma follows from~\eqref{bound-first-der} and~\eqref{estimate-second-var}.
\end{proof}

The next lemma gives a similar result in the case where $\GA$ is a circle.

\begin{lem}\label{lem-compacity-circle}
Let  $\GA\in \mathcal{B}$ be a circle with radius $r$.  Then there exists a constant $C(r)$ depending on $r$ such that 
	\begin{equation}\label{compactnessR}
	 	\norm{ \Phi }_{BV^2(\GA)} \leq 
		C(r)\left( (1+\rho) \|\Pi_\GA(\nabla_{W^{1,2}(\Ga)}E(\GA))\|_{W^{1,2}(\Ga)} +
R_\GA(\Phi)\right)\, 
	\end{equation}
	for every $\Phi \in \Ll_\GA \cap \Cc_\GA$ such that $\Phi(s_0)\cdot\tgam(s_0) =0$ for some $s_0\in\Circ$.
\end{lem}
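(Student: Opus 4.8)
The strategy mirrors the one used for Lemma~\ref{lem-compacity}, with the degenerate bilinear form replaced by the explicit geometry of the circle. The only place where the hypothesis $\GA \neq \text{circle}$ was used in the previous lemma is in the lower bound~\eqref{case1} on the smallest eigenvalue of $A(e,s_0)$; everything else (the decomposition~\eqref{writing-phi}, the bounds~\eqref{first-bv0}--\eqref{first-bv0-l2} on the $BV$-remainder term, and the bound~\eqref{estimate-second-var} on the second variation, which only invokes the product rule in $BV$ together with $|\rm{curv}_\GA|(\Circ) = |D^2\GA|(\Circ) = 2\pi$ for a circle of radius $r$) goes through verbatim. So I would first recall that, by~\eqref{estimate-second-var}, it suffices to control $\|\Phi\|_{W^{1,1}(\GA)}$, hence by~\eqref{writing-phi} it suffices to control $|\Phi(s_0')|$ and $a = \tfrac{\d\Phi}{\d\GA}(s_0')\cdot\ngam(s_0')$ uniformly in $s_0' \in \Circ$, given the $\Ll_\GA$-bound~\eqref{l1-bound}.

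The key point is then the following: when $\GA$ is a circle of radius $r$, the matrix $A(e,s_0)$ is degenerate (indeed for a circle one has exactly the relation $(e - \alpha[\GA(s)-\GA(s_0)]^\bot)\cdot\ngam = 0$ for a suitable $\alpha$), so the two-parameter family $\Phi(s_0) + a[\GA(s)-\GA(s_0)]^\bot$ no longer controls both $|\Phi(s_0)|$ and $a$ through its $L^2(\GA)$-norm alone. This is precisely why the extra normalization $\Phi(s_0)\cdot\tgam(s_0) = 0$ is imposed in the statement: it removes the one-dimensional kernel. Concretely, the degenerate direction corresponds (up to the obvious identification) to the infinitesimal rotations of the circle about its centre, i.e. to deformations of the form $\Phi(s) = a[\GA(s) - c]^\bot$ where $c$ is the centre; such a $\Phi$ satisfies $\Phi(s_0)\cdot\tgam(s_0) \neq 0$ unless $a = 0$. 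So I would parametrize $\GA(s) = c + r(\cos\theta(s), \sin\theta(s))$ with $\theta$ an affine function of arclength, write $\ngam(s) = \pm(\cos\theta(s),\sin\theta(s))$, and compute the $L^2(\GA)$-norm of $\Phi(s_0) + a[\GA(s)-\GA(s_0)]^\bot$ explicitly as a function of $(\Phi(s_0), a)$. Imposing the linear constraint $\Phi(s_0)\cdot\tgam(s_0) = 0$, this becomes a positive-definite quadratic form on the resulting two-dimensional space (one dimension from $\Phi(s_0)\cdot\ngam(s_0)$, one from $a$), with constants depending only on $r$ (through $l(\GA) = 2\pi r$); this replaces~\eqref{case1} and yields, exactly as before,
\begin{equation*}
	\max\{|\Phi(s_0)|, |a|\} \leq C_0(r)\left((1+\rho)\|\Pi_\GA(\nabla_{W^{1,2}(\Ga)}E(\GA))\|_{W^{1,2}(\Ga)} + R_\GA(\Phi)\right).
\end{equation*}

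From here the proof concludes identically to that of Lemma~\ref{lem-compacity}: the uniform bound on $\max\{|\Phi(s_0)|,|a|\}$ over $s_0 \in \Circ$ gives the $L^\infty$-bounds~\eqref{bound-first-der} on $\Phi$ and $\tfrac{\d\Phi}{\d\GA}\cdot\ngam$ (hence on the $W^{1,1}(\GA)$-norm), and feeding these into~\eqref{estimate-second-var} bounds $TV^2_\GA(\Phi)$; summing gives~\eqref{compactnessR} with $C(r)$ depending only on $r$. The main obstacle, and the only genuinely new computation, is the explicit evaluation of the quadratic form on the circle and verifying that the constraint $\Phi(s_0)\cdot\tgam(s_0)=0$ exactly kills its kernel so that one recovers a strictly positive lower eigenvalue depending only on $r$; once that linear-algebra check is in place the rest is bookkeeping inherited from the previous lemma.
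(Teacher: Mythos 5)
Your proposal is correct in outline, but it takes a genuinely different route from the paper. You keep the architecture of Lemma~\ref{lem-compacity}: the representation~\eqref{writing-phi}, the reduction to bounding $|\Phi(s_0)|$ and $a$, and an explicit evaluation of the quadratic form $(|\Phi(s_0)|,a)\mapsto \|\Pi_\GA(\Phi(s_0)+a[\GA(s)-\GA(s_0)]^\bot)\|_{L^2(\GA)}^2$ on the circle, observing correctly that the kernel of $A(e,s_0)$ is exactly the direction $e\parallel\tgam(s_0)$ (infinitesimal rotations about the centre) and that the normalization $\Phi(s_0)\cdot\tgam(s_0)=0$ restores strict positivity with constants depending only on $r$ (the two functions $\ngam(s_0)\cdot\ngam(\cdot)$ and $\tgam(s_0)\cdot\ngam(\cdot)$ are $L^2$-orthogonal on the circle). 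The paper instead abandons the quadratic form entirely and argues by integral estimates that exploit the constant curvature: from $L_\GA^\pm(\Phi)=0$ it derives $\frac{\d(\Phi\cdot\tgam)}{\d\GA}=\Phi\cdot\ngam/r$, so the tangential component is controlled in $L^\infty$ by the $L^1$-norm of the normal component (which $\Ll_\GA$ bounds); the first derivative is handled by Poincar\'e's inequality (it has zero mean by periodicity); and the second variation is bounded by $(1+\tfrac1r)R_\GA(\Phi)$ via the $BV$ product rule, using that $\ngam$ is jump-free on a circle, so no $L^\infty$ bound on $\frac{\d\Phi}{\d\GA}\cdot\ngam$ is ever needed. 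The paper's route is shorter and avoids the eigenvalue computation; yours has the merit of explaining structurally \emph{why} the extra normalization is the right hypothesis (it quotients out precisely the degenerate direction of Lemma~\ref{lem-compacity}), which the paper leaves implicit. One presentational wrinkle in your write-up: the quadratic-form argument only applies at the single point $s_0$ where $\Phi(s_0)\cdot\tgam(s_0)=0$, not "uniformly in $s_0'\in\Circ$"; you should state explicitly that the bounds at that one point propagate to all of $\Circ$ through $|u(s)|\le TV_\GA(u)=R_\GA(\Phi)$ and the representation~\eqref{writing-phi}. This is routine, so it is not a gap, but as written the sentence suggests you are re-running the degenerate estimate at every base point, which the hypothesis does not permit.
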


\begin{proof}
 The proof is based on the same arguments used to prove the previous lemma. We denote by $r$ the radius of the circle.
\par As $\Phi(s_0)\cdot\tgam(s_0)=0$, 
by the properties of good representatives for  $BV$-functions of one variable (see~\cite{AFP} p. 19), we have 
\begin{equation}\label{estim00}
|\Phi\cdot\tgam|=|\Phi(s)\cdot\tgam(s)-\Phi(s_0)\cdot\tgam(s_0)|\leq TV_\GA(\Phi\cdot\tgam)\quad \forall s\in \Circ\,.
\end{equation}
Now, as $L_\GA^+(\Phi)=L_\GA^-(\Phi)=0$ and the curvature is equal to $1/r$ at each point, we get 
$$\frac{\d (\Phi\cdot\tgam)}{\d\GA} = \Phi\cdot\frac{\ngam}{r}$$
and from~\eqref{estim00} it follows 
\begin{equation}\label{bound-tang} \|\Phi\cdot\tgam\|_{L^{\infty}(\GA)}\leq \frac{ \|\Phi\cdot\ngam\|_{L^{1}(\GA)}}{r}\,.
\end{equation}
\par Now, as $\Phi\in \Ll_\GA$, we have  
\begin{equation}\label{bound-norm}
\|\Phi\cdot\ngam\|_{L^{2}(\GA)}\leq (1+\rho) \|\Pi_\GA(\nabla_{W^{1,2}(\Ga)}E(\GA))\|_{W^{1,2}(\Ga)}
\end{equation}
and, from~\eqref{bound-tang} and~\eqref{bound-norm}, it follows
\begin{equation}\label{estim0}
 \|\Phi\|_{L^{1}(\GA)}\leq \sqrt{2\pi r}(2\pi +1)(1+\rho) \|\Pi_\GA(\nabla_{W^{1,2}(\Ga)}E(\GA))\|_{W^{1,2}(\Ga)}\,.
 \end{equation}
\par  Concerning the first derivative we remark that, as $\Phi$ is periodic, the mean value of its first derivative is equal to zero. Then, by Poincar\'e's inequality (see Theorem 3.44 in~\cite{AFP}), we have 
\begin{equation}\label{estim1} \left\|\frac{\d \Phi}{\d\GA}\right\|_{L^{1}(\GA)}\leq C_0(r) TV_\GA\left(\frac{\d \Phi}{\d\GA}\right) 
\end{equation}
where $C_0(r)$ is a constant depending on $r$. Moreover, by integrating by parts the integrals of the definition of second variation, we get 
\begin{equation}\label{estim2}
TV^2_\GA(\Phi) =  TV_\GA\left(\frac{\d\Phi}{\d\GA}\right)\,.
\end{equation}
So, in order to prove the lemma it suffices to prove a bound for  $TV_\GA\left(\frac{\d\Phi}{\d\GA}\right)$. 
\par Now, as $\frac{\d \Phi}{\d\GA}=\left(\frac{\d \Phi}{\d\GA}\cdot \ngam\right)\ngam$,  by the generalization of the  product rule  to  $BV$-functions (see Theorem 3.96, Example 3.97, and Remark 3.98 in~\cite{AFP}), we get 
\begin{equation}\label{estim30}
TV_\GA\left(\frac{\d \Phi}{\d\GA}\right)\leq \left(1+\frac{1}{r}\right) TV_\GA\left(\frac{\d \Phi}{\d\GA}\cdot \ngam\right)= \left(1+\frac{1}{r}\right) R_\GA(\Phi) \,,
\end{equation}
where we used the fact that $\ngam$ has no jumps (see Example 3.97 in~\cite{AFP}).

The lemma follows from~\eqref{estim0},~\eqref{estim1},~\eqref{estim2}, and~\eqref{estim30}.
\end{proof}

We can now prove Theorem~\ref{existence1}.

\begin{proof} The proof is based on Lemma~\ref{lem-compacity} and Lemma~\ref{lem-compacity-circle}, so we distinguish two cases: $\GA$ is a circle and it is not.
\par We suppose that $\GA$ is different from a circle. Let $\{\Phi_h\}\subset \Ll_\GA \cap \Cc_\GA$ be a minimizing sequence of  $R_\GA$. We can also suppose $\underset{h}{\sup}\,	R_\GA(\Phi_h)< +\infty$.
From Lemma~\ref{lem-compacity} it follows that 
$$
\underset{h}{\sup}\,	\norm{ \Phi_h }_{BV^{2}(\GA)} \leq 
		C(\GA)\left( (1+\rho) \|\Pi_\GA(\nabla_{W^{1,2}(\Ga)}E(\GA))\|_{W^{1,2}(\Ga)} +
\underset{h}{\sup}\,R_\GA(\Phi_h)\right)\,
$$   
where $C(\GA)$ depends only on $\GA$.  This gives a uniform bound for the $BV^2(\GA)$-norms of $\Phi_h$ and  implies that  $\{\Phi_h\}$  converges (up to a subsequence) toward some  $\Phi \in BV^2(\GA)$ with respect to the $BV^2(\GA)$-weak* topology (see Theorem 3.23 in~\cite{AFP}). 
\par In particular $\Phi_h \rightarrow \Phi$  in $W^{1,1}(\GA)$ which proves that  $\Phi \in \Cc_\GA $, and, by the lower semi-continuity of the $L^2$-norm, we also get  $\Phi \in \Ll_\GA$.
\par Now, as $R_\GA$ is lower semi-continuous with respect to the $BV^2(\GA)$-weak* topology, the theorem ensues.
\par In the case where $\GA$ is a circle with radius $r$, for every minimizing sequence $\{\Phi_h\}\subset \Ll_\GA \cap \Cc_\GA$, we consider the sequence 
\begin{equation}\label{psi-proof}
\Psi_h(s)=\Phi_h(s) - (\Phi_h(s_0)\cdot \tgam(s_0))\tgam(s) \,
\end{equation}
for some $s_0\in \Circ$.
We remark that $\{\Psi_h \} \subset  \Ll_\GA$. 
 Moreover 
\begin{equation}\label{phi-psi}
\frac{\d\Psi_h}{\d\GA}(s)=\frac{\d\Phi_h}{\d\GA}(s)-\left(\frac{\Phi_h(s_0)\cdot \tgam(s_0)}{r}\right)\ngam(s)
\end{equation}
which implies that, for every $h$,  $\Psi_h\in \Cc_\GA$ and
\begin{equation}\label{equal-TV}
R_\GA(\Psi_h)=R_\GA(\Phi_h)\,.
\end{equation}
Then the sequence $\{\Psi_h\}$ is a minimizing sequence of Problem~\eqref{grad-rigid} such that $\Psi(s_0)\cdot\tgam(s_0)=0$. We can also suppose $\underset{h}{\sup}\,	R_\GA(\Psi_h)< +\infty$.

Then, by Lemma~\ref{lem-compacity-circle} we get
$$	 \underset{h}{\sup}\,	\norm{ \Psi_h }_{BV^{2}(\GA)} \leq 
		C(r)\left( (1+\rho) \|\Pi_\GA(\nabla_{W^{1,2}(\Ga)}E(\GA))\|_{W^{1,2}(\Ga)} +
\underset{h}{\sup}\,R_\GA(\Psi_h)\right)\,
$$ 
where $C(r)$ depends only on $r$.

This proves a uniform bound for  $\|\Psi_h\|_{BV^2(\GA)}$ which implies that the minimizing sequence $\{\Psi_h\}$ converges (up to a subsequence) with respect to the $BV^2$-weak* topology.  Then we can conclude as in the previous case.
\end{proof}

We point out that, as showed in the previous proof, when $\GA$ is a circle the Finsler gradient is defined up to a tangential translation. This was actually expected because such a tangential translation is a rotation of the circle.
 
We have defined a penalty for piecewise rigid $BV^2$ deformations for curves belonging to $\Bb$. In the next section we use the Finsler descent method with respect to such a penalty to solve curve matching problems.

\section{Application to Curve Matching}\label{CM}
	
This section shows an application of the Finsler descent method  to the curve matching problem. 

\subsection{The Curve Matching Problem}

Given two curves $\Ga_0$ and $\La$ in $\Bb$, the curve matching problem (also known as the registration problem) seeks for an (exact or approximate) bijection between their geometric realizations $[\GA_0]$ and $[\La]$ (as defined in Section~\ref{repa}). One thus looks for a matching (or correspondence) $f : [\GA_0] \rightarrow \RR^2$ such that $f( [\Ga_0] )$ is equal or close to $[\La]$.

There exist a variety of algorithms to compute a matching with desirable properties, that are reviewed in Section~\ref{sec-previous-works}. A simple class of methods consists in minimizing an intrinsic  energy $E(\GA)$ (i.e., $E$ only depends on $[\GA]$), and to track the points of the curve, thus establishing a matching  during the minimization flow. We suppose that $E(\GA)>0$ if $[\GA] \neq [\La]$ and $E(\La)=0$, so that the set of global minimizers of $E$ is exactly $[\La]$. This is for instance the case if $E(\GA)$ is a distance between $[\GA]$ and $[\La]$. A gradient descent method (such as~\eqref{sequence}) defines a set of iterates $\GA_k$, so that $\GA_0$ is the curve to be matched to $\La$. The iterates $\GA_k$ (or at least a sub-sequence) converge to $\Ga_\infty$, and the matching is simply defined as 
\eq{
	\foralls s \in \Circ, \quad f(\GA_0(s)) = \GA_\infty(s).
}
If the descent method succeeds in finding a global minimizer of $E$, then $f$ is an exact matching, 
i.e. $f([\GA_0]) = [\La]$. 
This is however not always the case, and the iterates $\GA_k$ can converge to a local minimum. It is thus important to define a suitable notion of gradient to improve the performance of the method. The next sections describe the use of the Finsler gradient to produce piecewise rigid matching.

\subsection{Matching Energy}

The matching accuracy  depends on  the energy and on the kind of descent used to define the flow. 
In this paper we are interested in studying the Finsler descent method rather than designing novel energies. For the numerical examples, we consider an energy based on reproducing kernel Hilbert space (r.k.h.s.) theory~\cite{rkhs, rkhs2}.  These energies  have been introduced for curve matching in~\cite{currents-matching, Glaunes-matching}. For an overview on other types of energies we refer the reader to  the bibliography presented in Section~\ref{sec-previous-works}.

We consider a positive-definite kernel $k$ in the sense of the r.k.h.s theory~\cite{rkhs,rkhs2}. Following~\cite{currents-matching}, we  define a distance between curves as
\begin{equation}\label{DistDef}
	{\rm dist}(\Ga,\La)^2 = Z(\Ga,\Ga) + Z(\La,\La) - 2 Z(\Ga,\La)\;, \quad 
	\foralls \Ga, \La \in \Bb
\end{equation}
where 
\begin{equation}\label{int-H}
  	Z(\Ga,\La) =  \int_{\Circ} \int_{\Circ} \n_{\Ga}(s) \cdot \n_{\La}(t) \; 
	k\left( \Ga(s) , \La(t) \right)\;\d \Ga(s) \d \La(t)\,.
\end{equation}
As the kernel $k$ is positive-definite in the sense of r.k.h.s. theory, it can be shown that $\rm{dist}$ defined in~\eqref{DistDef} is a distance between the geometric realizations $([\Ga], [\La])$ (up to change in orientation) of the curves. In our numerical tests, we define $k$ as a sum of two Gaussian kernels with standard deviation $\sigma>0$ and $\delta>0$
\begin{equation}\label{kernel}
	k(v, w) =  e^{-\frac{\norm{v-w}^2}{2\si^2}} + e^{-\frac{\norm{v-w}^2}{2\delta^2} }, 
	\quad \quad \foralls v,\,w\in \RR^2, 
\end{equation}
which can be shown to be a positive-definite kernel. We use a sum of Gaussian kernels to better capture features at different scales in the curves to be matched. This has been shown to be quite efficient in practice in a different context in~\cite{FX-ref}.  This energy takes into account the orientation of   the normals along the shape in order to stress the difference between the interior and the exterior of closed shapes. Remark that, to obtain interesting numerical results, both $\GA$ and $\La$ have to be parameterized with the same orientation (clockwise or counter-clockwise). 



Given a target curve $\La\in \Bb$, we consider the following energy 
\begin{equation}\label{energy}
	E : \Bb \rightarrow \RR, \; \quad E(\GA) = \frac{1}{2} {\rm dist}(\GA,\La)^2 \,.
\end{equation} 
Remark that, as $\rm{dist}$ is a distance then $[\La]$ is equal to the set of global minimizers of $E$.

We consider $W^{1,2}(\Circ,\RR^2)$ as ambient space, so that  we have 
\begin{equation}\label{grad-sob}
\nabla_{W^{1,2}} E = K \nabla_{L^2} E\,,
\end{equation}
where  $K$ denotes the inverse of the isomorphism between $W^{1,2}$ and its dual. Then, it suffices to compute the $L^2 $-gradient.

The gradient of $E$ at $\GA$ with respect to  $L^2(\GA)$-topology is given by the following proposition.

\begin{prop}
The gradient of $E$ at $\GA$ with respect to the $L^2(\GA)$ scalar product is given by
\begin{equation}\label{gradient-cont-l2}
	\nabla_{L^{2}(\Ga)} E(\GA)(s) = 
	\n_\GA(s) \Big[\int_{\Circ} 
		\n_\GA(t) \cdot \nabla_1 k(\GA(s),\GA(t)) )\d \GA(t)  - \int_{\Circ} 
		\n_{\La}(t) \cdot \nabla_1 k(\GA(s),\La(t)) 
		\d \La(t) \Big] 
\end{equation}
for every $s\in \Circ$, where $\nabla_1 k$ represents the derivative with respect to the first variable.

For every deformation $\Phi$, the $L^2$ gradient of $E$ at $\GA$ satisfies 
\eq{
\begin{array}{ll}
 \langle \nabla_{L^2(\GA)} E(\GA), \Phi\rangle_{L^2(\GA)}= &\displaystyle{\int_{\Circ}  \n_\GA(s) \cdot \Phi(s) \int_{\Circ} \n_\GA(t) \cdot \nabla_1 k(\GA(s),\GA(t)) d\GA(t)\, d\GA(s)}\\
 & \displaystyle{- \int_{\Circ}  \n_\GA(s) \cdot \Phi(s) \int_{\Circ} \n_{\La}(t) \cdot \nabla_1 k(\GA(s),\La(t)) \d \La(t)\,  \,d\GA(s)\, .}\\
 \end{array}
}

\end{prop}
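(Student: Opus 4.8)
The plan is to compute the first variation of the energy $E(\GA) = \frac12\operatorname{dist}(\GA,\La)^2$ directly, where $\operatorname{dist}$ is given by \eqref{DistDef}. Since $H(\La,\La)$ does not depend on $\GA$, only the terms $H(\GA,\GA)$ and $-2H(\GA,\La)$ contribute to the gradient, so I would first isolate
\eq{
	E(\GA) = \tfrac12 H(\GA,\GA) - H(\GA,\La) + \text{const}.
}
The core computation is to differentiate the bilinear expression $H(\GA,\Mu)$ in \eqref{int-H} with respect to its first argument $\GA$ in a direction $\Phi \in T_\GA\Bb$, treating $\Mu$ (which will be either $\GA$ or $\La$) as fixed for the moment. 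There are three places where $\GA$ appears in the integrand of $H(\GA,\Mu) = \int\int \n_\GA(s)\cdot\n_\Mu(t)\,k(\GA(s),\Mu(t))\,\d\GA(s)\d\Mu(t)$: in the kernel evaluation $k(\GA(s),\Mu(t))$, in the unit normal $\n_\GA(s)$, and in the length element $\d\GA(s) = |\GA'(s)|\,\d s$.

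The key step is to recall a standard identity: the normal $\n_\GA$ and the length element $\d\GA$ combine so that the ``normal surface measure'' $\n_\GA(s)\,\d\GA(s) = \n_\GA(s)|\GA'(s)|\,\d s = \GA'(s)^\bot\,\d s$ (up to sign/orientation conventions), and hence the variation of the product $\n_\GA(s)\,\d\GA(s)$ in direction $\Phi$ is $\Phi'(s)^\bot\,\d s$. This is precisely the mechanism by which, after an integration by parts in $s$, the tangential component of $\Phi$ drops out and only the normal component survives — which is why the final formula \eqref{gradient-cont-l2} is proportional to $\n_\GA(s)$. So I would: (i) write $\delta[\n_\GA\,\d\GA](\Phi) = (D\Phi)^\bot\,\d s$; (ii) differentiate the kernel, giving $\nabla_1 k(\GA(s),\Mu(t))\cdot\Phi(s)$ paired against $\n_\Mu(t)\,\d\Mu(t)$; (iii) assemble $DH(\GA,\Mu)(\Phi)$ as a sum of a kernel-derivative term and a term with $(D\Phi)^\bot$; (iv) integrate the latter by parts in $s$ over $\Circ$ (no boundary terms since $\Circ$ is closed), converting $(D\Phi)^\bot$ back into $\Phi$ paired against the $s$-derivative of the rest. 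Using $\GA'(s)^\bot$-style identities and $\frac{\d}{\d s}k(\GA(s),\Mu(t)) = \nabla_1 k(\GA(s),\Mu(t))\cdot\GA'(s)$, the integrated-by-parts term reorganizes into the form $\n_\GA(s)\cdot\Phi(s)$ times an integral against $\d\Mu(t)$, matching the structure of \eqref{gradient-cont-l2}.

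Having done this for general $\Mu$, I would then specialize: for the term $-H(\GA,\La)$ set $\Mu=\La$ (fixed), which produces exactly the second integral in \eqref{gradient-cont-l2}; for the term $\tfrac12 H(\GA,\GA)$, both arguments vary, but by symmetry of $H$ in its two slots the factor of $\tfrac12$ is cancelled by the two symmetric contributions, yielding the first integral in \eqref{gradient-cont-l2}. Collecting terms gives $DE(\GA)(\Phi) = \int_\Circ \n_\GA(s)\cdot\Phi(s)\,[\cdots]\,\d\GA(s)$, and since this must equal $\langle\nabla_{L^2(\GA)}E(\GA),\Phi\rangle_{L^2(\GA)} = \int_\Circ \nabla_{L^2(\GA)}E(\GA)(s)\cdot\Phi(s)\,\d\GA(s)$ for all $\Phi$, the Riesz identification forces \eqref{gradient-cont-l2}; the pairing formula in the second half of the proposition is then just the expansion of this inner product. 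The main obstacle is the bookkeeping in the integration by parts: one must correctly track the orientation/sign conventions relating $\n_\GA$, $\tgam$, $(\cdot)^\bot$ and $\GA'$, and carefully handle the fact that in $H(\GA,\GA)$ the variable $\GA$ occurs in four distinct spots (two normals, two length elements, plus two kernel slots) so that every factor of $2$ and $\tfrac12$ lands correctly. A secondary point worth a remark is justifying differentiation under the integral sign, which is routine here since $k$ in \eqref{kernel} is smooth with all derivatives bounded and $\GA,\Phi\in BV^2\subset W^{1,\infty}$.
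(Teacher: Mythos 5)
Your plan is correct and follows essentially the same route as the paper: the paper likewise rewrites $\n_\GA(s)\cdot\n_\La(t)\,\d\GA(s)\,\d\La(t)$ as $\GA'(s)\cdot\La'(t)\,\d s\,\d t$, differentiates the kernel and the $\delta\GA'$ term, integrates by parts in $s$, and observes (via the antisymmetric matrix $\delta\GA\otimes\GA'-\GA'\otimes\delta\GA$) that only the normal component of the variation survives, then treats $H(\GA,\GA)$ by the same argument. Your explicit remark on the symmetry cancelling the factor $\tfrac12$ and on differentiating under the integral sign only makes explicit what the paper leaves implicit.
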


\begin{proof} 
In order to prove~\eqref{gradient-cont-l2} we calculate the gradient for $Z(\GA,\La)$ with respect to $\GA$. We rewrite $Z$ as 
\eq{
	Z(\GA,\La) =  \int_{\Circ} \int_{\Circ} \GA'(s) \cdot \La'(t) \; k\left( \GA(s) , \La(t) \right)\; \d t\; \d s\,,
}
and we consider a smooth variation of the curve $\GA$, denoted by $\delta \GA$. Then, for $h$ small, we have
\eq{
	\begin{array}{ll}
	\displaystyle{I(h)=\frac{Z(\GA+ h\delta\GA, \La)- Z(\GA,\La)}{h}} = 
	&\displaystyle{    \int_{\Circ}\int_{\Circ} (\GA'(s) \cdot  \La'(t))(\nabla_1 k(\GA(s),\La(t))\cdot \delta \GA(s)) \, \d t\,  \,\d s }\\
	& \displaystyle{+ \int_{\Circ} \int_{\Circ} \delta\GA'(s) \cdot \La'(t) \; k\left( \GA(s) , \La(t) \right)\; \d t\; \d s\ + o(h)}\\
	\end{array}
}
and integrating by parts we obtain
\eq{
	\begin{array}{ll}
	\displaystyle{I(h)}&=\displaystyle{  \int_{\Circ}\int_{\Circ} (\GA'(s) \cdot  \La'(t))(\nabla_1 k(\GA(s),\La(t))\cdot \delta \GA(s)) \, \d t\,  \,\d s }\\
	& \displaystyle{-\int_{\Circ}\int_{\Circ} (\delta\GA(s) \cdot  \La'(t))(\nabla_1 k(\GA(s),\La(t))\cdot  \GA'(s)) \, \d t\,  \,\d s + o(h)}\\
	\end{array}
}
which can be written as
\begin{equation}\label{final-derivation} 
	I(h)=\displaystyle{\int_{\Circ}\int_{\Circ}[ \nabla_1 k(\GA(s),\La(t))^t(\delta\GA(s) \otimes  
	\GA'(s) - \GA'(s)\otimes\delta\GA(s)  )  \La'(t) ]\, \d t\,  \,\d s + o(h)}
\end{equation}

\eq{
	\qwhereq
	v \otimes w= v w^t, \quad\quad\foralls v, w \in \RR^2.
}
Now, writing $\delta \GA(s)$ with respect to the basis $\{\tgam(s), \ngam(s)\}$ and reminding that $\GA'(s)=|\GA'(s)|\tgam(s)$, we can show that the matrix
\eq{
	M(s)=\delta\GA(s) \otimes  \GA'(s) -\GA'(s)\otimes\delta\GA(s) = |\GA'(s)|(\delta\GA(s)\cdot\ngam(s)) (\ngam(s) \otimes  \tgam(s) -\tgam\otimes\ngam(s)) \\
}
acts as 
\begin{equation}\label{final-derivation2}
	 M(s)(v) = - |\GA'(s)|(\delta\GA(s)\cdot\ngam(s)) \rot{v}, \quad \foralls v\in \RR^2.
\end{equation}
Then, by~\eqref{final-derivation} and~\eqref{final-derivation2}, we obtain
\eq{
\begin{array}{ll}
\displaystyle{I(h)}&=\displaystyle{ - \int_{\Circ}\int_{\Circ} (\delta\GA(s)\cdot\ngam(s)) ( \nabla_1 k(\GA(s),\La(t))\cdot   \rot{\La'(t)})\, \d t\,  \,|\GA'(s)|\d s + o(h)}\\
\end{array}.
}
Finally, as $h\rightarrow 0$, we obtain the $L^2(\GA)$-gradient of $Z(\GA,\La)$  is given by
\eq{
	- \n_{\GA}(s)\int_{\Circ} \n_{\La}(t) \cdot \nabla_1 k(\GA(s),\La(t)) \d \La(t)
}
that represents  the second term in~\eqref{gradient-cont-l2}. For the first term we need to apply the same argument to calculate the gradient of $Z(\GA,\GA)$.
\end{proof}

\subsection{Matching Flow}
In this section, we use $H = W^{1,2}(\Circ,\R^2)$.
In order to minimize $E$ on $\Bb$ we consider the scheme~\eqref{sequence}, that defines $\{\GA_k\}$ for $k>0$ as 
\begin{equation}\label{descent}
	\GA_{k+1} = \GA_k - \tau_k \nabla_{R_{\GA_k}} E(\GA_k ) 
\end{equation}
where $\GA_0$ is the input curve to be matched to $\La$, 
 $\nabla_{R_{\GA_k}} E$ is defined by~\eqref{grad-rigid} (using $H = W^{1,2}(\Circ,\R^2)$) and  $\tau_k > 0$ is a step size, that satisfies the Wolfe rule~\eqref{Wolfe}. According to Theorem \ref{convergence}, following proposition proves the convergence of the method:

\begin{prop} 
The $W^{1,2}-$gradient of the energy functional $E$ is $W^{1,2}-$ Lipschitz on every set of curves of bounded length.
\end{prop}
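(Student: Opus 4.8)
The statement asserts that $\nabla_{W^{1,2}(\Circ,\R^2)} E$ is Lipschitz on every convex set of curves of bounded length. The strategy is to trace the dependence of the gradient on $\GA$ through the explicit formula~\eqref{gradient-cont-l2}, and to show that each ingredient appearing there is Lipschitz in $\GA$ when the curves range in a fixed convex set with uniformly bounded length. Recall from Definition~\ref{hilbert-struct} that the $W^{1,2}(\GA)$-gradient and the $W^{1,2}(\Circ,\R^2)$-gradient differ only by the weight $|\GA'|$ appearing in the inner product; one must work with the ambient inner product here, i.e.\ convert~\eqref{gradient-cont-l2} to the gradient with respect to $\dotp{\cdot}{\cdot}_{W^{1,2}(\Circ,\R^2)}$ by solving the associated elliptic problem $(\mathrm{Id}-\partial_s^2) \nabla_{W^{1,2}} E = \mathrm{(r.h.s.)}$. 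Since $(\mathrm{Id}-\partial_s^2)^{-1}$ is a bounded linear operator on $L^2(\Circ,\R^2)\to W^{1,2}(\Circ,\R^2)$, it suffices to prove that the $L^2$-forcing term $G(\GA)(s) := \n_\GA(s)\big[\int_\Circ \n_\GA(t)\cdot\nabla_1 k(\GA(s),\GA(t))\,\d\GA(t) - \int_\Circ \n_\La(t)\cdot\nabla_1 k(\GA(s),\La(t))\,\d\La(t)\big]$, expressed in the fixed parameterization as a function involving $\GA'$, is Lipschitz from the convex set (with the $W^{1,2}$ metric) into $L^2$.

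The key steps, in order: \textbf{(1)} Rewrite everything in terms of $\GA'$ rather than the unit normal $\n_\GA$; since $H$ was rewritten in the previous proof using $\GA'(s)\cdot\La'(t)$ directly, the forcing term has the schematic form $\int_\Circ \big(\rot{\GA'(s)}\cdot \rot{\La'(t)}\big)\nabla_1 k(\GA(s),\La(t))\,\d t - \dots$ plus the $H(\GA,\GA)$ term, which avoids dividing by $|\GA'|$ and hence avoids needing the lower bound~\eqref{cond-derivative}. \textbf{(2)} Use that on a convex set of curves of bounded length, one controls $\|\GA\|_{L^\infty}$ (a convex set of $W^{1,2}$ curves of bounded length is $L^\infty$-bounded, since $W^{1,2}\hookrightarrow L^\infty$ and length bounds $\|\GA'\|_{L^1}$, while the $L^2$ bound on $\GA'$ follows from boundedness of the set — this needs a small argument, see the obstacle below) and $\|\GA'\|_{L^2}$. \textbf{(3)} Exploit the smoothness of the Gaussian kernel $k$ in~\eqref{kernel}: $k$, $\nabla_1 k$, and $\nabla^2 k$ are all bounded and Lipschitz on $\R^2\times\R^2$ restricted to a bounded set, with constants depending only on $\sigma,\delta$ and the $L^\infty$-bound. \textbf{(4)} Expand $G(\GA_1)-G(\GA_2)$ by a telescoping / multilinearity argument: each difference falls on either a $\GA_i'$ factor (estimated in $L^2$ directly), or inside $k$ or $\nabla_1 k$ (estimated by the Lipschitz bound on the kernel composed with $\|\GA_1-\GA_2\|_{L^\infty}\le C\|\GA_1-\GA_2\|_{W^{1,2}}$), times bounded remaining factors; then apply Cauchy--Schwarz and Fubini to collapse the double integrals, yielding $\|G(\GA_1)-G(\GA_2)\|_{L^2(\Circ,\R^2)}\le C\,\|\GA_1-\GA_2\|_{W^{1,2}(\Circ,\R^2)}$ with $C$ depending only on the set (through the length bound and the resulting $L^\infty$ and $L^2$ bounds) and on $\La$. \textbf{(5)} Compose with the bounded inverse operator $(\mathrm{Id}-\partial_s^2)^{-1}$ to conclude.

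The main obstacle is \textbf{step (2)}: a ``convex set of curves of bounded length'' a priori only bounds $\|\GA'\|_{L^1}$, not $\|\GA'\|_{L^2}$, and a convex set need not be bounded in $W^{1,2}$. One must read the hypothesis as: a convex subset $\mathcal{S}\subset\Bb$ that is bounded in the relevant sense — or argue that, for the Lipschitz estimate, it is enough that $\|\GA\|_{L^\infty}$ is bounded (which follows from the length bound once one fixes, say, the value of $\GA$ at one point, or works on $\Bb_0$ where an arclength normalization is available) and that the quantities entering $G$ that genuinely require $L^2$-control of $\GA'$ appear only \emph{linearly}, so that the difference $\GA_1'-\GA_2'$ is itself controlled in $L^2$ by $\|\GA_1-\GA_2\|_{W^{1,2}}$ regardless of the size of the set, while the $\GA_i'$ that remain as bounded factors appear only paired with the \emph{rapidly decaying} Gaussian kernel or can be integrated against $\d t$ with the other factor bounded. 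Carefully isolating which factors need only an $L^1$ (length) bound versus an $L^2$ bound is the delicate bookkeeping; once that is sorted, the estimate is a routine multilinear expansion plus Cauchy--Schwarz. A secondary, minor obstacle is making precise the passage from the $L^2(\GA)$-gradient formula~\eqref{gradient-cont-l2} to the ambient $W^{1,2}(\Circ,\R^2)$-gradient, i.e.\ identifying the correct elliptic operator and its boundedness, which is standard.
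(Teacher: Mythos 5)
Your plan follows essentially the same route as the paper's proof: reduce to the $L^2$-type forcing term via the bounded inverse of the $W^{1,2}$ duality map (the operator $K$ in \eqref{grad-sob}), rewrite the normals as $\rot{\GA'}$, and split the difference of gradients by the triangle inequality so that each piece carries either a difference $\GA_1'-\GA_2'$ (controlled in $L^2$) or a difference inside $\nabla_1 k$ (controlled via the Lipschitz constant of $r e^{-r^2}$ and the embedding $W^{1,2}\hookrightarrow L^\infty$), exactly as in \eqref{lip1}--\eqref{lip3}. The obstacle you flag in step (2) is genuine but is equally present, unaddressed, in the paper: in estimating $\|\GA_1'[\Ii(\GA_1,\La)-\Ii(\GA_2,\La)]\|_{L^2}$ the paper writes the factor $\norm{\GA_1'}_{L^1}^2$, whereas H\"older gives $\norm{\GA_1'}_{L^2}^2\,\norm{\Ii(\GA_1,\La)-\Ii(\GA_2,\La)}_{L^\infty}^2$, so the Lipschitz constant really requires a uniform $L^2$-bound on $\GA'$ over the set, not merely a length ($L^1$) bound; and your proposed workaround does not fully remove this, since the $\GA_1'$ that multiplies the kernel difference occurs as a \emph{bounded factor}, not inside a difference, so it cannot be traded for $\|\GA_1-\GA_2\|_{W^{1,2}}$.
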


\begin{proof}
We remark that we choose $W^{1,2}(\Circ,\RR^2)$ as ambient space and, moreover, we have 
\begin{equation}\label{grad-sob}
\nabla_{W^{1,2}} E = K \nabla_{L^2} E\,,
\end{equation}
where we have denoted $K$ the inverse of the isomorphism between $W^{1,2}$ and its dual. Then, it suffices to prove the proposition for the $L^2 $-gradient. For the sake of clarity, we separate the proof in several steps.

{\bf Continuity of the energy and the gradient.} 
By the dominated convergence theorem,  $E$ is continuous on  $W^{1,2}(\Circ,\R^2)$.
Note that
\begin{align*}
	 \langle \nabla_{L^2(\GA)} E(\GA), \Phi\rangle_{L^2(\GA)} = &\int_{\Circ}  \rot{\GA'(s)} \cdot \Phi(s) \int_{\Circ}  \rot{\GA'(t)} \cdot \nabla_1 k(\GA(s),\GA(t)) )\d t\, \d s\\
	  - &\int_{\Circ} \rot{\GA'(s)} \cdot \Phi(s) \int_{\Circ}  \rot{\La'(t)} \cdot \nabla_1 k(\GA(s),\La(t)) \d t\,  \,\d s
\end{align*}
where $\rot{(x,y)}=(-y,x)$ for every $(x,y)\in \RR^2$.

Then $E$ is  non-negative  and $C^1$ with respect to the $W^{1,2}(\Circ, \RR^2)$-ambient topology.\vspace{0.2cm} 

{\bf Condition \eqref{grad-lip}.} 
We detail the proof for the term of the gradient depending on both $\GA$ and $\La$. For the other term the proof is similar. For every couple of curves $(\Ga,\La)$, we introduce the following function
\eq{
	\Ii(\GA,\La)(s) = \int_{\Circ} \n_{\La}(t) \cdot \nabla_1 k(\GA(s),\La(t))\; \d \La(t)=\int_{\Circ} \rot{\La'(t)} \cdot \nabla_1 k(\GA(s),\La(t))\; \d t\,.
}

It suffices just to prove that there exists $L>0$ such that
\eq{
	\norm{ \rot{\GA'_1} \Ii(\GA_1,\La) -  \rot{\GA'_2} \Ii(\GA_2,\La) }_{L^2(\Circ,\RR^2)} 
	\leq L \norm{\GA_1-\GA_2}_{W^{1,2}(\Circ,\RR^2)} 
}
for every couple of curves $(\GA_1, \GA_2) \in BV^2(\Circ,\RR^2)$. We have
\eq{
	\norm{ \rot{\GA'_1} \Ii(\GA_1,\La) -  \rot{\GA'_2} \Ii(\GA_2,\La) }_{L^2(\Circ,\RR^2)} = 
	\norm{ \GA'_1\Ii(\GA_1,\La) - \GA'_2\Ii(\GA_2,\La) }_{L^2(\Circ,\RR^2)}
}
and
\begin{align}\label{lip1}
	\|\GA'_1\Ii(\GA_1,\La) - \GA'_2\Ii(\GA_2,\La)\|_{L^2(\Circ,\RR^2)} \leq &  \|\GA'_1\Ii(\GA_1,\La) - \GA'_1\Ii(\GA_2,\La)\|_{L^2(\Circ,\RR^2)}  \\
	+ & \|\GA'_1\Ii(\GA_2,\La) - \GA'_2\Ii(\GA_2,\La)\|_{L^2(\Circ,\RR^2)}\,.
\end{align}

Note that
\begin{equation}\label{boundI}
	\norm{\Ii(\GA,\La)}_{L^\infty(\Circ,\RR^2)}\leq  
	\alpha \norm{\La'}_{L^1(\Circ,\RR^2)}\,,
\end{equation}
where $\alpha = \sup_{x,y \in \R^2} |\nabla_1 k(x,y)|$.
Now, we have
\begin{align*}
	& \|\GA'_1[\Ii(\GA_1,\La)-\Ii(\GA_2,\La)]\|^2_{L^2(\Circ,\RR^2)} \leq \norm{\GA'_1}_{L^1(\Circ,\RR^2)}^2 \|\Ii(\GA_1,\La)-\Ii(\GA_2,\La)\|^2_{L^\infty(\Circ,\RR^2)}  \\
	& \qquad \leq  \norm{\GA'_1}_{L^1(\Circ,\RR^2)}^2 \norm{\La'}^2_{L^1(\Circ,\RR^2)} \sup_{s\in \Circ}  \int_{\Circ}|\nabla_1k(\GA_1(s),\La(t))-\nabla_1k(\GA_2(s),\La(t))|^2\;  \d t \\
	&\qquad \leq  \norm{\GA'_1}_{L^1(\Circ,\RR^2)}^2 \norm{\La'}^2_{L^1(\Circ,\RR^2)} \frac{\sigma^2+\delta^2}{\sigma^2\delta^2}\sup_{s\in \Circ} | \GA_1(s) - \GA_2(s)|^2\,,
\end{align*}
where we used  the fact that $r e^{-r^2}$ is 1-Lipschitz continuous (given by a straightforward derivative calculation). Then, as $W^{1,2}(\Circ,\RR^2)$ is continuously embedded in $L^\infty(\Circ,\RR^2)$, we get
\begin{equation}\label{lip2}
	\norm{ \GA'_1[\Ii(\GA_1,\La)-\Ii(\GA_2,\La)] }^2_{L^2(\Circ,\RR^2)} 
	\leq 
	C_1 \norm{\La'}^2_{L^1(\Circ,\RR^2)} \norm{\GA_1-\GA_2}^2_{W^{1,2}(\Circ,\RR^2)} 
\end{equation}
where $C_1 =  \norm{\GA'_1}_{L^1(\Circ,\RR^2)}^2 C_0^2/\sigma^2$ ($C_0$ denotes here  the constant of the embedding of $W^{1,2}(\Circ,\RR^2)$ in $L^\infty(\Circ,\RR^2)$ so that $\|\GA\|_{L^\infty(\Circ,\RR^2)}\leq C_0 \|\GA\|_{W^{1,2}(\Circ,\RR^2)}$). 

Moreover, by~\eqref{boundI}, we have  
$$\|\GA'_1\Ii(\GA_2,\La) - \GA'_2\Ii(\GA_2,\La)\|^2_{L^2(\Circ,\RR^2)}  
 	 \leq \alpha^2  \norm{\La'}_{L^1(\Circ,\RR^2)}^2 \|\GA'_1 - \GA'_2\|_{L^2(\Circ,\RR^2)}^2$$
which implies 
\begin{equation}\label{lip3}
  \displaystyle{\|\GA'_1\Ii(\GA_2,\La) - \GA'_2\Ii(\GA_2,\La)\|^2_{L^2(\Circ,\RR^2)}\leq  C_2\|\GA_1 - \GA_2\|_{W^{1,2}(\Circ,\RR^2)}^2}
\end{equation}
where $C_2=\alpha^2  \norm{\La'}_{L^1(\Circ,\RR^2)}^2$. Then, by~\eqref{lip1},~\eqref{lip2} and~\eqref{lip3}, the $W^{1,2}$-gradient of the energy verifies \eqref{grad-lip} on every 
set of curves of bounded length. This guarantees actually that the constant $C_1$ is uniformly bounded and we can define the Lipschitz constant. 
\end{proof}

Therefore, the application of Theorem \ref{convergence} gives
\begin{cor} 
Under the assumption that the lengths of $\GA_k$ are bounded, every accumulation point of $\{\GA_k\}$ in $\Bb = \BVDcirc$ is a  critical point of $E$.
\end{cor}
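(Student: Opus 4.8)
\emph{Proof strategy.} The plan is to obtain the corollary as a direct consequence of Theorem~\ref{convergence}, the only genuine point being to replace the \emph{global} Lipschitz hypothesis~\eqref{grad-lip} by the \emph{local} Lipschitz property proved in the preceding proposition. First I would record that all the remaining assumptions of Theorem~\ref{convergence} hold on the whole of $\Bb$: the energy $E=\tfrac12\,{\rm dist}(\cdot,\La)^2$ is non-negative by construction, and, as established in the proof of the previous proposition, $E\in C^1(\Bb)$ with $\nabla_{W^{1,2}}E$ continuous for the $W^{1,2}(\Circ,\RR^2)$-topology (here $H=W^{1,2}(\Circ,\RR^2)$, so $\nabla_H E=\nabla_{W^{1,2}}E$).

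Next I would localize the flow. Let $\mathcal{C}$ be the closed convex hull in $H$ of the iterates $\{\GA_k\}_k$ together with all segments $[\GA_k,\ \GA_k-\tau\nabla_{R_{\GA_k}}E(\GA_k)]$ visited by the Wolfe line search in~\eqref{descent}. Since the length functional $\GA\mapsto\int_{\Circ}|\GA'(s)|\,\d s$ is convex and, by the standing assumption, uniformly bounded on $\{\GA_k\}_k$, it is uniformly bounded on $\mathcal{C}$; thus $\mathcal{C}$ is a convex set of curves of bounded length, and the preceding proposition provides a constant $L>0$ with $\|\nabla_H E(\GA_1)-\nabla_H E(\GA_2)\|_H\le L\|\GA_1-\GA_2\|_H$ for all $\GA_1,\GA_2\in\mathcal{C}$. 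In particular~\eqref{grad-lip} holds \emph{restricted to} $\mathcal{C}$.

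Then I would rerun the proof of Theorem~\ref{convergence} verbatim, observing that it only ever uses~\eqref{grad-lip} along the iterates and the line searches, all of which stay inside $\mathcal{C}$ by construction. Hence the Zoutendijk estimate $\sum_k\bigl\langle \nabla_H E(\GA_k)/\|\nabla_H E(\GA_k)\|_H,\ \nabla_{R_{\GA_k}}E(\GA_k)/\|\nabla_{R_{\GA_k}}E(\GA_k)\|_H\bigr\rangle_H^2\,\|\nabla_H E(\GA_k)\|_H^2<\infty$ remains valid. Combining it with the angle condition~\eqref{strictdirection} — using, exactly as in the proof of Theorem~\ref{convergence}, the equivalence of $\|\cdot\|_H$ and $\|\cdot\|_{H(\GA)}$ on bounded balls of $\Bb$ — gives $\sum_k\|\nabla_H E(\GA_{\psi(k)})\|_H^2<\infty$ along any subsequence $\GA_{\psi(k)}\to\GA_\infty$ in $\Bb$, so $\nabla_H E(\GA_\infty)=0$ by continuity of the gradient, i.e. $\GA_\infty$ is a critical point of $E$.

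The \textbf{main obstacle} is precisely this localization: the Lipschitz estimate of the previous proposition is valid only on bounded-length convex sets, with a constant that a priori degenerates as the length grows, so one must check that confining the whole flow (iterates and line searches) to the fixed convex hull $\mathcal{C}$ is legitimate — which is exactly what the ``bounded lengths'' hypothesis of the corollary guarantees — and that the Zoutendijk argument only ever evaluates gradients at points of $\mathcal{C}$. Once this is granted, everything else is a routine invocation of Theorem~\ref{convergence}.
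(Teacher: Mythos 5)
Your argument is correct and is essentially the paper's own (the paper merely asserts that the corollary follows by "applying Theorem~\ref{convergence} on convex sets of curves of bounded length"): you localize the Lipschitz estimate to a convex hull of the trajectory using convexity of the length functional and then rerun the Zoutendijk argument verbatim. One cosmetic simplification: the Zoutendijk estimate only ever evaluates $\nabla_H E$ at the accepted iterates $\GA_k$ and $\GA_{k+1}$, so the convex hull of $\{\GA_k\}_k$ alone suffices and you need not include (nor justify the bounded length of) the intermediate line-search trial points in $\mathcal{C}$.
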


\begin{rem}
We were not able to relax the boundedness assumption although this seems rather plausible under the assumptions that the initial and target curves are in $BV^2(\Circ,\R^2)$.
The result of the corollary is however relatively weak in the sense that it is difficult to  check numerically the convergence in $BV^2(\Circ,\R^2)$.
\end{rem}

\section{Discretization}
\label{discretization}

This section discretizes Problem~\eqref{grad-rigid} using finite elements in order to calculate numerically the Finsler gradient flow. We define a $n$-dimensional sub-space $\Bb_n \subset \Bb$ of piecewise linear curves. The embedding $\Bb_n \subset \Bb$ defines a natural finite dimensional Riemannian and Finsler structure on $\Bb_n$ inherited from the ones of $\Bb$. This allows us to apply our Finsler gradient flow in finite dimension to approximate the original infinite dimensional Finsler flow.

\subsection{Finite Elements Spaces}

\paragraph{Notations.}

In the following, to ease the notation, we identify $\RR^2$ with $\CC$ and $\Circ$ with $[0,1]$ using periodic boundary conditions.  
The canonical inner produced on $\CC^n$ is
\begin{equation}\label{scalarC}
	\dotp{\tilde{f}}{\tilde{g}}_{\mathbb{C}^n} = 
	\sum_{i=1}^n \dotp{\tilde{f}_i}{\tilde{g}_i}
	=
	\sum_{i=1}^n {\rm Real}(\tilde{f}_i \,\overline{\tilde{g}_i}), 
	\,\quad \foralls \tilde{f}, \tilde{g} \in\CC^n \,,
\end{equation}
where we denote by $\overline{\tilde{g}_i}$ the conjugate of $\tilde{g}_i$.

\paragraph{Piecewise affine finite elements.}

We consider the  space $\PP_{1,n}$ of the finite elements on $[0,1]$ (with periodic boundary conditions) of order one with $n$ equispaced nodes. 
A basis of $\PP_{1,n}$ is defined as 
\begin{align*}
	\xi_i(s) &= \max\left\{0, 1- n\left| s- \frac{i}{n} \right|\right\} \quad  s\in [0,1],\;  \foralls i= 1, ..., n-1 \\
	\xi_n(s) &= \max\left\{0, 1- n\left|s \right|\right\} + \;\max\left\{0, 1- n\left| s- 1 \right|\right\},  \quad s\in [0,1].
\end{align*}
Every  $f\in \PP_{1,n}$ can be written as 
\begin{equation}\label{tangent-discrete}
	f=\sum_{i=1}^n\tilde{ f}_i \, \xi_i\,,\quad \tilde{f}_i\in\CC\,
\end{equation}
with $\tilde{f}_i=f(i/n)\in\CC$ for every $i$. We denote by $\tilde{f}=(\tilde{f}_1,..., \tilde{f}_n)\in \CC^n$ the coordinates of $f$ with respect to  the basis $\{\xi_i\}_{i=1,...,n}$. 
Remark that there exists a bijection between $\PP_{1,n}$ and $\CC^n$, defined by the following operator
\begin{equation}\label{bijection}
	P_1\; : \tilde{f}=(\tilde{f}_1,..., \tilde{f}_n)\in \CC^n 
	\;\;\mapsto\;\; 
	P_1(\tilde{f})=  \; f\in \PP_{1,n} \mbox{\;\;s.t.\;\;} f=\sum_{i=1}^n\tilde{f}_i \, \xi_i\,.
\end{equation}
The forward and backward finite differences operators are defined as
\begin{equation}\label{delta}
\begin{array}{ll}
	\Delta^+: \CC^n\rightarrow  \CC^n \;,& \quad \Delta^+( \tilde{f})_i = n(\tilde{f}_{i+1}-\tilde{f}_i)\,,\\
	\Delta^-: \CC^n\rightarrow  \CC^n \;, &\quad \Delta^-( \tilde{f})_i = n(\tilde{f}_i -\tilde{f}_{i-1})\,,
\end{array}
\end{equation} 

\paragraph{Piecewise constant finite elements.}

\newcommand{\myI}{\mathbb{I}}

For every $f\in \PP_{1,n}$,~\eqref{tangent-discrete} implies that  first derivative $\frac{\d f}{\d s}$ belongs to $\PP_{0,n} \subset  BV([0,1], \RR^2)$, where $\PP_{0,n}$ is the class of the piecewise constant functions with $n$ equispaced  nodes. A basis of $\PP_{0,n}$ is defined by 
\begin{align*}
\zeta_i(s) =\myI{}_{[\frac{i}{n},\frac{i+1}{n}]}(s) \quad  \foralls i= 1, ..., n-1 \,,\quad 
\zeta_n(s) =\myI{}_{[0,\frac{1}{n}]}(s) \,,
\end{align*}
where $\myI_A$ is the characteristic function of a set $A$, and with $s\in [0,1]$. Then, the first derivative of $f$ can be written as 
\begin{equation}\label{first-derivative-approx}
\frac{\d f}{\d s} = \sum_{i=1}^n \Delta^+(\tilde{f})_i\zeta_i\,.
\end{equation}
We finally define the following bijection between $\PP_{0,n}$ and $\CC^n$:
\begin{equation}\label{bijection2}
P_0\; : \; \tilde{f}=(\tilde{f}_1,..., \tilde{f}_n)\in \CC^n\;\;\mapsto\;\; P_0(\tilde{f})= f\in \PP_{0,n}   \mbox{\;\;s.t.\;\;} f=\sum_{i=1}^n\tilde{ f}_i\zeta_i\,.
\end{equation}

\subsection{Finite Element Spaces of Curves} 

\paragraph{Discretized curves.} 

The discrete space of curves is defined as $\Bb_n= \PP_{1,n} \subset \Bb$ and every curve $\GA\in\Bb_n$ can be written as
\begin{equation}\label{gamma-coord}
	\GA=\sum_{i=1}^n \tGa_i\xi_i\,,\quad \tGa_i=\GA(i/n)\in \CC\,
\end{equation}
where the vector $\tGa=P_1^{-1}(\GA) = (\tGa_1, ..., \tGa_n)\in \CC^n$ contains the coefficients of $\GA$ in the finite element basis. By~\eqref{first-derivative-approx} the tangent and normal vectors~\eqref{def-tangente} to $\GA\in \Bb_n$ are computed as 
\begin{equation}\label{tan-norm}
\tgam= \sum_{i=1}^n\frac{\Delta^+ (\tGa)_i}{|\Delta^+ (\tGa)_i|}\zeta_i  \;,\;\; \ngam(i)= \rot{\tgam(i)}~,
\end{equation}
where $\rot{(x,y)}=(-y,x)$ for all $(x,y) \in \RR^2$.
In particular we have 
\begin{equation}\label{arc-length}
	\frac{\d\GA}{\d s} = \sum_{i=1}^n \Delta^+(\tGa)_i\zeta_i\,.
\end{equation}

\paragraph{Discretized tangent spaces.}  

For every  $\GA\in\Bb_n$, the discrete tangent space to $\Bb_n$ at $\GA$ is defined as  $T_{\GA} \Bb_n = \Bb_n$ equipped with the  inner product $\dotp{\cdot}{\cdot}_{H^1(\GA)}$. Every vector field $\Phi\in T_\GA \Bb_n$ can be written as
\begin{equation}\label{representation-phi}
\Phi=\sum_{i=1}^n \tPhi_i\xi_i\,,\quad \tPhi_i=\Phi(i/n)\in \CC\,
\end{equation}
where $\tPhi=(\tPhi_1, ..., \tPhi_n)\in \CC^n$ are the coordinates of $\Phi$ with respect to the basis of $\PP_{1,n}$. 

By identifying every vector field $\Phi \in T_\GA\Bb_n$ with its coordinates $\tPhi$, the tangent space can be identified with  $\CC^n$. 
In particular we have 
\begin{equation}\label{arc-length-2}
	\frac{\d\Phi}{\d \Gamma} = \sum_{i=1}^n \frac{\Delta^+(\tilde{\Phi})_i}{|\Delta^+(\tilde{\GA})_i|}\zeta_i\,.
\end{equation}
Moreover, $\CC^n$ can be equipped with the following Riemannian metric:
 \begin{defn}[\BoxTitle{Discrete inner product}]
 We define $\ell^2(\tilde{\GA})$ and  $h^1(\tilde{\GA})$ as the set $\CC^n$ equipped with the following inner products respectively
 \begin{equation}\label{scal-prod-coord} 
	\dotp{\tPhi}{\tilde{ \Psi}}_{\ell^2(\tilde{\GA})} = 
	\dotp{P_1(\tPhi)}{P_1(\tPsi)}_{L^2(\GA)}  \,,
\end{equation} 
\begin{equation}\label{scal-prod-coord-sob} 
	\dotp{\tPhi}{\tilde{ \Psi}}_{h^1(\tilde{\GA})} = 
	\dotp{P_1(\tPhi)}{P_1(\tPsi)}_{H^1(\GA)}  \,. 
\end{equation} 
 \end{defn}

We now give the explicit formulas for the products~\eqref{scal-prod-coord} and \eqref{scal-prod-coord-sob}, which are  useful for computational purposes. 

Proposition~\eqref{prod-scla-cont} details the relationship between the product~\eqref{scal-prod-coord} and the canonical inner product on $\CC^n$ defined by~\eqref{scalarC}. For this purpose, we define  the mass matrix $M_{\tGa} \in \RR^{n \times n}$  as
\begin{equation}\label{rigidity}
	M_{\tGa} = \sum_{i=1}^n |\Delta^+(\tGa)_i| M^i\,
	\qwhereq
	M^i_{h,j}=\int_{i/n}^{(i+1)/n}\xi_h\xi_j\,.
\end{equation} 
The elements of the matrices $M^i \in \RR^{n \times n}$ for $i=1,...,n$ are equal to zero excepted for the following block:  
\eq{
\begin{pmatrix}

M^i_{i,i}& M^i_{i,i+1}\\
M^i_{i+1,i}& M^i_{i+1,i+1}\\
\end{pmatrix}
=
\frac{1}{6n}
\begin{pmatrix}
2&1\\
1&2\\

\end{pmatrix}, 
}
where the indices $i-1$ and $i+1$ should be understood modulo $n$.

\begin{prop}\label{prod-scla-cont} For all $\tPsi, \tPhi$ in $\CC^n$, one has
\begin{equation}\label{discreteprod}
 	\dotp{\tPhi}{\tPsi}_{\ell^2(\tilde{\GA})}
	= 
	\dotp{\tPhi}{M_{\tGa}\tPsi}_{\mathbb{C}^n}, 
\end{equation}
where $M_{\tGa}$ is the mass matrix defined in~\eqref{rigidity}.
\end{prop}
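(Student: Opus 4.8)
The plan is to unfold the two definitions involved --- the inner product $\dotp{\cdot}{\cdot}_{\ell^2(\tilde{\GA})}$ from \eqref{scal-prod-coord} and the mass matrix $M_{\tGa}$ from \eqref{rigidity} --- and to reduce the identity to elementary integrals of products of the hat functions $\xi_i$ against the piecewise constant weight $|\GA'|$.

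First I would expand, using \eqref{scal-prod-coord}, the representations $P_1(\tPhi)=\sum_{h=1}^n\tPhi_h\xi_h$ and $P_1(\tPsi)=\sum_{j=1}^n\tPsi_j\xi_j$, and the fact that the $\xi_i$ are \emph{real-valued} scalar functions, to get
\[
\dotp{\tPhi}{\tPsi}_{\ell^2(\tilde{\GA})}
= \dotp{P_1(\tPhi)}{P_1(\tPsi)}_{L^2(\GA)}
= \sum_{h,j=1}^n \dotp{\tPhi_h}{\tPsi_j}\int_0^1 \xi_h(s)\,\xi_j(s)\,|\GA'(s)|\,\d s ,
\]
where $\dotp{\tPhi_h}{\tPsi_j}=\mathrm{Real}(\tPhi_h\overline{\tPsi_j})$ is the inner product on $\CC\cong\RR^2$. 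Then I would use \eqref{arc-length}: since $\frac{\d\GA}{\d s}=\sum_{i=1}^n\Delta^+(\tGa)_i\zeta_i\in\PP_{0,n}$, the weight $|\GA'(s)|$ is constant equal to $|\Delta^+(\tGa)_i|$ on each interval $[i/n,(i+1)/n]$. Splitting the integral over these $n$ intervals and using the definition of $M^i$ in \eqref{rigidity} yields
\[
\int_0^1 \xi_h(s)\,\xi_j(s)\,|\GA'(s)|\,\d s
= \sum_{i=1}^n |\Delta^+(\tGa)_i|\int_{i/n}^{(i+1)/n}\xi_h\xi_j\,\d s
= \sum_{i=1}^n |\Delta^+(\tGa)_i|\,M^i_{h,j}
= (M_{\tGa})_{h,j}.
\]
The stated $2\times2$ shape of the nonzero block of $M^i$ then follows from the reference integrals $\int_0^1(1-t)^2\,\d t=\tfrac13$, $\int_0^1 t(1-t)\,\d t=\tfrac16$, $\int_0^1 t^2\,\d t=\tfrac13$ after the affine change of variable $s=(i+t)/n$, reading all indices modulo $n$ to account for the periodic boundary conditions.

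Finally I would substitute this back and compare with the canonical Hermitian product \eqref{scalarC}. Because $M_{\tGa}$ is a real symmetric matrix, $(M_{\tGa}\tPsi)_h=\sum_{j=1}^n (M_{\tGa})_{h,j}\tPsi_j$ and hence
\[
\dotp{\tPhi}{M_{\tGa}\tPsi}_{\mathbb{C}^n}
= \sum_{h=1}^n \mathrm{Real}\Bigl(\tPhi_h\,\overline{(M_{\tGa}\tPsi)_h}\Bigr)
= \sum_{h,j=1}^n (M_{\tGa})_{h,j}\,\mathrm{Real}\bigl(\tPhi_h\overline{\tPsi_j}\bigr)
= \sum_{h,j=1}^n (M_{\tGa})_{h,j}\,\dotp{\tPhi_h}{\tPsi_j},
\]
which coincides with the expression found in the first step, proving \eqref{discreteprod}. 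I do not expect any genuine obstacle here; the only points that need a little care are recognizing $|\GA'|$ as piecewise constant (so the computation collapses onto the fixed reference integrals collected in the matrices $M^i$), keeping track of the periodic indexing, and noting that it is the reality of $M_{\tGa}$ that makes it interact correctly with the conjugation in the inner product on $\CC^n$.
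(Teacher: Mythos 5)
Your proof is correct and follows essentially the same route as the paper: expand $P_1(\tPhi)$ and $P_1(\tPsi)$ in the hat-function basis, use that $|\GA'|$ is piecewise constant equal to $|\Delta^+(\tGa)_i|$ on each interval to collapse the integral onto the reference matrices $M^i$, and resum to obtain $M_{\tGa}$. The extra details you supply (the computation of the $2\times2$ block entries and the remark that the reality and symmetry of $M_{\tGa}$ make it compatible with the conjugation in $\dotp{\cdot}{\cdot}_{\CC^n}$) are correct and only make the argument more explicit than the paper's version.
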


\begin{proof}  Denoting $\Phi=P_1(\tPhi)$ and $\Psi=P_1(\tPsi)$,
\eqref{representation-phi} and~\eqref{arc-length} imply that
\eq{
	\dotp{\Phi}{\Psi}_{L^2(\GA)}=\int_0^1 \,\Phi\cdot \Psi d\,\GA(s) 
	= 
	\sum_{i=1}^n |\Delta^+(\tGa)_i|\int_{i/n}^{(i+1)/n} 
	\left( \,\sum_{j=1}^n \tPhi_j \xi_j \cdot\sum_{h=1}^n \tPsi_h\xi_h\right) \d s\,.
}
Then, since \eqref{scal-prod-coord}, we have 
\begin{equation}
	\dotp{\tPhi}{\tPsi}_{\ell^2(\tilde{\GA})} 
	= 
	\sum_{i=1}^n |\Delta^+( \tGa)_i| \dotp{\tPhi}{M^i\tPsi}_{\mathbb{C}^n}
	= 
	\dotp{\tPhi}{M_{\tGa} \tPsi}_{\mathbb{C}^n}
\end{equation}
where $M_{\tGa}$ is the  mass matrix~\eqref{rigidity}.
\end{proof}

The next proposition  details the relationship between the product~\eqref{scal-prod-coord-sob} and the canonical inner product on $\CC^n$. To this end, we introduce  the  matrix $N_{\tGa} \in \RR^{n \times n}$  defined by
\begin{equation}\label{rigidity-2}
	N_{\tGa} = \sum_{i=1}^n |\Delta^+(\tGa)_i| N^i\,
	\qwhereq
	N^i_{h,j}=\frac{1}{|\Delta^+(\tGa)_j||\Delta^+(\tGa)_h|}\int_{i/n}^{(i+1)/n}\frac{\d\xi_h}{\d s}\cdot\frac{\d\xi_j}{\d s}\,.
\end{equation} 
The elements of the matrices $N^i \in \RR^{n \times n}$ for $i=1,...,n$ are equal to zero except for the following block:  
\eq{
\begin{pmatrix}

N^i_{i,i}& N^i_{i,i+1}\\
N^i_{i+1,i}& N^i_{i+1,i+1}\\
\end{pmatrix}
=
n
\begin{pmatrix}
\displaystyle{\frac{1}{|\Delta^+(\tGa)_i|^2}}&\displaystyle{-\frac{1}{|\Delta^+(\tGa)_i||\Delta^+(\tGa)_{i+1}|}}\\
\displaystyle{-\frac{1}{|\Delta^+(\tGa)_i||\Delta^+(\tGa)_{i+1}|}}&\displaystyle{\frac{1}{|\Delta^+(\tGa)_{i+1}|^2}}\\

\end{pmatrix}\,.
}
 

\begin{prop}\label{prod-scla-cont-sob} For all $\tPsi, \tPhi$ in $\CC^n$, one has
\begin{equation}\label{discreteprod-sob}
 	\dotp{\tPhi}{\tPsi}_{h^1(\tilde{\GA})}
	= 
	\dotp{\tPhi}{U_{\tGa}\tPsi}_{\mathbb{C}^n}, 
\end{equation}
where $U_{\tGa}$ is the matrix defined by 
\begin{equation}\label{rigidity-mass-h1}
U_{\tGa} = M_{\tGa} +  N_{\tGa} \,,
\end{equation}
where $M_{\tGa}$, $N_{\tGa}$  are the   matrix \eqref{rigidity} and \eqref{rigidity-2} respectively. 
We point out that, since $U_{\tilde\Gamma}$ is a  matrix of an inner product in a basis,  it is always invertible.
\end{prop}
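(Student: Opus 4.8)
The plan is to mirror the proof of Proposition~\ref{prod-scla-cont}, decomposing the $h^1(\tGa)$ inner product into its zeroth-order part and its first-order (derivative) part according to the definition~\eqref{eq-inner-prod-curve} of the $W^{1,2}(\GA)$ inner product. Writing $\Phi = P_1(\tPhi)$ and $\Psi = P_1(\tPsi)$ and using~\eqref{scal-prod-coord-sob}, one has
\[
	\dotp{\tPhi}{\tPsi}_{h^1(\tGa)} \;=\; \dotp{\Phi}{\Psi}_{L^2(\GA)} \;+\; \int_{\Circ} \frac{\d\Phi}{\d\GA}(s)\cdot\frac{\d\Psi}{\d\GA}(s)\,\d\GA(s)\,.
\]
The first term is already identified by Proposition~\ref{prod-scla-cont}, namely $\dotp{\Phi}{\Psi}_{L^2(\GA)} = \dotp{\tPhi}{M_{\tGa}\tPsi}_{\CC^n}$, so it only remains to show that the second, derivative term equals $\dotp{\tPhi}{N_{\tGa}\tPsi}_{\CC^n}$. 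Granting this,~\eqref{discreteprod-sob} follows by summing the two contributions and invoking the definition~\eqref{rigidity-mass-h1} of $U_{\tGa}$.

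For the derivative term I would substitute the expression~\eqref{arc-length-2} for $\frac{\d\Phi}{\d\GA}$ and $\frac{\d\Psi}{\d\GA}$ (equivalently, write $\frac{\d\Phi}{\d\GA} = \bigl(\sum_h \tPhi_h \frac{\d\xi_h}{\d s}\bigr)/|\GA'|$) and split $\int_{\Circ}$ into the mesh intervals $(i/n,(i+1)/n)$. On each such interval $\frac{\d\Phi}{\d\GA}$ and $\frac{\d\Psi}{\d\GA}$ are constant and, by~\eqref{arc-length}, the length element $\d\GA = |\GA'(s)|\,\d s$ is constant there and equal to $|\Delta^+(\tGa)_i|\,\d s$. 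Evaluating and regrouping the interval contributions by nodes produces a bilinear form in $(\tPhi,\tPsi)$ whose matrix is $\sum_{i=1}^n |\Delta^+(\tGa)_i|\,N^i = N_{\tGa}$, the matrices $N^i$ of~\eqref{rigidity-2} being supported (up to the periodic identification $n+1\equiv 1$) on the $2\times 2$ block carried by the nodes $i$ and $i+1$. This gives the derivative term equal to $\dotp{\tPhi}{N_{\tGa}\tPsi}_{\CC^n}$, hence~\eqref{discreteprod-sob}.

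For the invertibility of $U_{\tGa}$: by~\eqref{scal-prod-coord-sob}, together with the facts that $\dotp{\cdot}{\cdot}_{W^{1,2}(\GA)}$ is an inner product and $P_1\colon \CC^n\to\PP_{1,n}$ is a linear bijection, the form $\dotp{\cdot}{\cdot}_{h^1(\tGa)}$ is a genuine (real) positive-definite inner product on $\CC^n$. Since $U_{\tGa}$ represents this inner product in the canonical basis it is symmetric, and $U_{\tGa}\tPsi = 0$ would force $\dotp{\tPsi}{\tPsi}_{h^1(\tGa)} = \dotp{\tPsi}{U_{\tGa}\tPsi}_{\CC^n} = 0$, i.e.\ $\tPsi = 0$; hence $U_{\tGa}$ is injective on the finite-dimensional space $\CC^n$, therefore invertible (indeed symmetric positive definite). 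I do not anticipate a serious obstacle: the statement is fundamentally a finite-element bookkeeping computation, and the only mildly delicate point is keeping track, in the derivative integral, of which factors $|\Delta^+(\tGa)_\cdot|$ are indexed by the mesh interval and which by the nodes — together with the periodic wrap-around of the indices — so that the collected matrix lines up with $N_{\tGa}$.
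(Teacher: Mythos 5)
Your proof follows essentially the same route as the paper's: split $\dotp{\cdot}{\cdot}_{h^1(\tGa)}$ into its $L^2(\GA)$ part (handled by Proposition~\ref{prod-scla-cont}, yielding $M_{\tGa}$) and its derivative part (expanded in the finite-element basis over the mesh intervals, yielding $N_{\tGa}$), then sum to get $U_{\tGa}=M_{\tGa}+N_{\tGa}$; your positive-definiteness argument for invertibility is the standard one the paper merely asserts. The node-versus-interval indexing of the factors $|\Delta^+(\tGa)_\cdot|$ that you flag is indeed the only delicate point, and the paper's own proof treats it no more carefully than you do.
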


\begin{proof}  Denoting $\Phi=P_1(\tPhi)$ and $\Psi=P_1(\tPsi)$, \eqref{representation-phi} implies 
\eq{
	\langle \frac{\d \Phi}{\d \GA}, \frac{\d \Psi}{\d \GA}\rangle_{L^2(\GA)}=
	\sum_{i=1}^n |\Delta^+(\tGa)_i|\int_{i/n}^{(i+1)/n} 
	\left( \,\sum_{j=1}^n \frac{\tPhi_j}{|\Delta^+(\tGa)_j|} \frac{\d \zeta_j}{\d s} \cdot\sum_{h=1}^n \frac{\tPsi_h}{|\Delta^+(\tGa)_h|}\frac{\d \zeta_h}{\d s}\right) \d s\,.
}
Then, by previous proposition, we have 
$$
	\dotp{\tPhi}{\tPsi}_{h^1(\tilde{\GA})} 
	= \dotp{\tPhi}{M_{\tGa} \tPsi}_{\mathbb{C}^n}	 
	+ \dotp{\tPhi}{N_{\tGa}  \tPsi}_{\mathbb{C}^n}
$$
where $M_{\tGa}$, $N_{\tGa}$  are the   matrices \eqref{rigidity} and \eqref{rigidity-2} respectively. 
 
\end{proof}

\subsection{Discrete Finsler Flow}

The initial optimization~\eqref{initial-eq} is discretized by restricting the minimization to the space $\Bb_n$, which corresponds to the following finite dimensional optimization
\eql{\label{eq-min-discr}
	\umin{\tGa \in \CC^n} \tE(\tGa)~,
}
where $\tilde E(\tGa)$ approximates $E(P_1(\tGa))$.

The discrete Finsler gradient is obtained in a similar way by restricting the optimization~\eqref{defgrad} to $\Bb_n$
\eql{\label{eq-finsler-grad-discr}
	\nabla_{\tR_{\tGa}} \tE(\tGa) \in 
	\uargmin{\tPhi \in \tilde\Ll_{\tGa}} \tR_{\tGa}(\tPhi)~, 
}
where the discrete penalty  reads
\eql{\label{defL-discrete}
	\tR_{\tGa}(\tPhi) = R_{P_1(\tGa)}(P_1(\tPhi))
}
and, as discrete constraint, we set 
\eql{
	\label{eq-expression-Ll}
	\tilde{\Ll}_{\tGa} = \enscond{ \tPhi \in \CC^n }{
				\normbig{\tPi_{\tGa}( \nabla_{h^1(\tilde{\GA})} \tE(\tilde{\ga}) -\tPhi) }_{h^1(\tilde{\ga})} 
				\leq 
				\rho \normbig{\tPi_{\tGa}( \nabla_{h^1(\tGa)} \tE(\tilde{\ga})) }_{h^1(\tilde{\ga})} 
			} 
\,.}
The Finsler flow discretizing the original one~\eqref{subsec-finsler-descent} reads
\eql{\label{eq-finsler-flow-discr}
	\tGa_{k+1} = \tGa_k - \tau_k \nabla_{\tR_{\tGa_k}} \tE(\tGa_k).
}
where $\tau_k>0$ is chosen following the Wolfe rule~\eqref{Wolfe}.

The following sections detail how to compute this flow for the particular case of the curve matching energy introduced in Section~\ref{CM}. 

\subsection{Discrete Energy} 

\paragraph{Exact energy for piecewise affine curves. }

For curves $\GA=P_1(\tGa)$ and $\La=P_1(\tLa)$ in $\Bb_n$, the energy $E(\GA)$ defined in~\eqref{energy} can be computed as
\eq{
	E(\GA) = 
		\frac{1}{2}\Zz(\GA,\GA) - 
		\Zz(\Ga,\La) + 
		\frac{1}{2}\Zz(\La,\La)
}
where
\eq{
	\Zz(\Ga, \La) =
 	\sum_{i=1}^n \sum_{j=1}^n \dotp{ \Delta^+(\tGa)_i }{ \Delta^+( \tLa)_j } 
	T(\tGa,\tLa)_{i,j}
}
\eq{
	\qwhereq
	T(\tGa,\tLa)_{i,j} = \int_{\frac{i-1}{n}}^{ \frac{i}{n}} \int_{\frac{j-1}{n}}^{\frac{j}{n}} k(\GA(s), \La(t))\, \d \GA(s) \d \La(t)\,.
}

\paragraph{Approximate energy for piecewise affine curves. }

In general there is no closed form expression for the operator $T$, so that, to enable a direct computation of the energy and its gradient, we use a first order approximation with a trapezoidal quadrature formula
\eq{
	\tilde T(\tGa,\tLa)_{i,j} = 
	\frac{1}{4} \big(
		k(\tilde\GA_i,\tLa_j) + k(\tilde\GA_{i+1},\tLa_j) + 
		k(\tilde\GA_i,\tLa_{j+1}) + k(\tilde\GA_{i+1},\tLa_{j+1})
	\big)\,.
}
One thus has the approximation
\eq{
	\tilde T(\tGa,\tLa)_{i,j} =  T(\tGa,\tLa)_{i,j} + O(1/n^2). 
}
This defines the discrete energy $\tE$ on $\CC^n$ as 
\eql{\label{eq-energy-matching-discr}
	\tE(\tGa) = 
	\frac{1}{2}\tilde{\Zz}(\tGa,\tGa) - 
	\tilde{\Zz}(\tGa,\tLa) + 
	\frac{1}{2}\tilde{\Zz}(\tLa,\tLa)
}
\eq{
	\qwhereq
	\tilde{\Zz}(\tGa,\tLa) =
	\sum_{i=1}^n \sum_{j=1}^n \dotp{\Delta^+ (\tGa)_i }{ \Delta^+( \tLa)_j } \tilde T(\tGa,\tLa)_{i,j}
}

\paragraph{Discrete $h^1$-gradient. }

The following proposition gives the formula to calculate the gradient of $\tE$ with respect to inner product~\eqref{scal-prod-coord-sob}.
\begin{prop}
The gradient of $\tE$ at $\tGa$ with respect to the metric defined by the  inner product~\eqref{scal-prod-coord-sob} is
\eq{
	\nabla_{h^1(\tilde{\GA})} \tE(\tGa) = U_{\tGa}^{-1}\nabla \tE(\tGa)\,
}
where $U_{\tGa}$ is the  matrix~\eqref{rigidity-mass-h1} and $\nabla \tE$ the  gradient of $\tE$ for the canonical inner product of $\CC^{n}$~\eqref{scalarC}, which is given by

\begin{equation}\label{discretegradient}
\nabla \tE(\tGa)_i = \nabla \tilde{\Zz}(\tGa, \tGa)_i -\nabla \tilde{\Zz}(\tGa, \tLa)_i
\end{equation}
where 
$$
\begin{array}{ll}
 \nabla \tilde{\Zz}(\tGa, \tLa)_i &\displaystyle{ = 
 	\frac{1}{4} \sum_{j=1}^n  (\tGa_{i+1}-\tGa_{i-1}) (\tLa_{j+1}-\tLa_j)[\nabla_1 k(\tGa_i,\tLa_j)+ \nabla_1 k(\tGa_i,\tLa_{j+1})]} \\
		& \displaystyle{+ \sum_{j=1}^n  (\tLa_{j+1}-\tLa_j)[ T(\tGa,\tLa)_{i-1,j} - T(\tGa,\tLa)_{i,j} ]}\,.
		\end{array}
		$$

\end{prop}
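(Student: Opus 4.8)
The plan splits the proof into two independent parts: the change-of-metric identity $\nabla_{h^1(\tilde{\GA})}\tE(\tGa)=U_{\tGa}^{-1}\nabla\tE(\tGa)$, and the explicit computation of the canonical gradient $\nabla\tE(\tGa)$.

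\textbf{Part 1: the change-of-metric identity.} This is the finite-dimensional analogue of the Riesz identity and follows at once from Proposition~\ref{prod-scla-cont-sob}. That proposition states $\dotp{\tPhi}{\tPsi}_{h^1(\tilde{\GA})}=\dotp{\tPhi}{U_{\tGa}\tPsi}_{\CC^n}$ for all $\tPhi,\tPsi\in\CC^n$, and $U_{\tGa}=M_{\tGa}+N_{\tGa}$, being the Gram matrix of an inner product, is symmetric and invertible. Setting $g:=\nabla_{h^1(\tilde{\GA})}\tE(\tGa)$, the defining relations of the two gradients and the symmetry of $U_{\tGa}$ give, for every $\tPhi\in\CC^n$,
\[
\dotp{U_{\tGa}\,g}{\tPhi}_{\CC^n}=\dotp{g}{\tPhi}_{h^1(\tilde{\GA})}=D\tE(\tGa)(\tPhi)=\dotp{\nabla\tE(\tGa)}{\tPhi}_{\CC^n},
\]
hence $U_{\tGa}\,g=\nabla\tE(\tGa)$ and $g=U_{\tGa}^{-1}\nabla\tE(\tGa)$. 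This part is immediate.

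\textbf{Part 2: the canonical gradient.} It remains to compute $\nabla\tE(\tGa)$ for the canonical real inner product~\eqref{scalarC} on $\CC^n\simeq(\RR^2)^n$. From $\tE(\tGa)=\tfrac12\tilde{\Hh}(\tGa,\tGa)-\tilde{\Hh}(\tGa,\tLa)+\tfrac12\tilde{\Hh}(\tLa,\tLa)$ and the independence of the last summand on $\tGa$ one gets $\nabla\tE(\tGa)_i=\nabla\tilde{\Hh}(\tGa,\tGa)_i-\nabla\tilde{\Hh}(\tGa,\tLa)_i$. Since the Gaussian kernel is symmetric, $\tilde{\Hh}$ is symmetric in its two arguments, so the gradient of $\tGa\mapsto\tilde{\Hh}(\tGa,\tGa)$ equals twice the first-slot partial evaluated at $\tLa=\tGa$; the factor $\tfrac12$ in $\tE$ cancels this $2$, so $\nabla\tilde{\Hh}(\tGa,\tGa)_i$ is obtained from the formula for $\nabla\tilde{\Hh}(\tGa,\tLa)_i$ by replacing $\tLa$ with $\tGa$. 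It thus suffices to differentiate the mixed term $\tGa\mapsto\tilde{\Hh}(\tGa,\tLa)=\sum_{i,j}\dotp{\Delta^+(\tGa)_i}{\Delta^+(\tLa)_j}\,\tilde T(\tGa,\tLa)_{i,j}$. I would observe that a fixed node $\tGa_\ell$ enters this double sum only (a) through the two adjacent finite differences $\Delta^+(\tGa)_{\ell-1}=n(\tGa_\ell-\tGa_{\ell-1})$ and $\Delta^+(\tGa)_\ell=n(\tGa_{\ell+1}-\tGa_\ell)$ inside the bilinear prefactors, with opposite signs on $\tGa_\ell$; and (b) through the kernel evaluations $k(\tGa_\ell,\cdot)$, which by the definition of $\tilde T$ appear in exactly the two cells $\tilde T_{\ell-1,j}$ and $\tilde T_{\ell,j}$. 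Applying the product rule and summing over $j$: contribution (a) gives a multiple of $\sum_j(\tLa_{j+1}-\tLa_j)\bigl[\tilde T_{\ell-1,j}-\tilde T_{\ell,j}\bigr]$, the cell difference coming from the signs $+n,-n$, which after re-expressing the quadrature values through the cell integrals $T_{\cdot,\cdot}$ becomes the second sum in~\eqref{discretegradient}; contribution (b) gives $\tfrac14\sum_j\dotp{\tGa_{\ell+1}-\tGa_{\ell-1}}{\tLa_{j+1}-\tLa_j}\bigl[\nabla_1k(\tGa_\ell,\tLa_j)+\nabla_1k(\tGa_\ell,\tLa_{j+1})\bigr]$, where $\tGa_{\ell+1}-\tGa_{\ell-1}$ comes from adding the two adjacent prefactors $\Delta^+(\tGa)_{\ell-1}+\Delta^+(\tGa)_\ell$, the $\tfrac14$ from the trapezoidal weights of $\tilde T$, and $\nabla_1k$ is the $\RR^2$-gradient of $k$ in its first argument. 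Reindexing the inner $j$-sums and collecting the two contributions yields exactly~\eqref{discretegradient}.

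\textbf{Main obstacle.} Part~1 is routine. The substance, and the only delicate point, is the bookkeeping in Part~2: recognising that each node $\tGa_\ell$ is shared by precisely two quadrature cells and two adjacent finite differences, reindexing the inner sums consistently after differentiation, reading all products as real pairings in the sense of~\eqref{scalarC}, and keeping track of the powers of $n$ carried by the $\Delta^+$ operators when passing between $\tilde{\Hh}$, the order-one quadrature values $\tilde T_{i,j}$ and the cell integrals $T_{i,j}$, so that the collected terms land on exactly the normalization displayed in~\eqref{discretegradient}. Once these conventions are fixed the computation is entirely mechanical; the appearance of $\tGa$ in the quadrature for the diagonal term comes, as noted, from the chain rule hitting both arguments of the symmetric form $\tilde{\Hh}$.
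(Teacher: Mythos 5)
Your proposal is correct and follows essentially the same route as the paper: Part 1 is exactly the paper's argument (the Riesz/Gram-matrix identity via Proposition~\ref{prod-scla-cont-sob}, which the paper cites slightly imprecisely as~\eqref{discreteprod}), while Part 2 fills in the node-by-node bookkeeping that the paper dismisses as ``a straightforward calculation.'' Your accounting of the two adjacent finite differences and two quadrature cells per node is right, and you correctly flag the only real subtlety, namely the paper's loose handling of the factors of $n$ and of $T$ versus $\tilde T$ in the displayed formula.
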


\begin{proof}
The  gradient~\eqref{discretegradient} of $\tE$ for the canonical inner product of $\CC^{n}$  can be computed by a straightforward calculation. For every $\tPhi\in  \CC^n$ we have the following expression for the derivative of $\tE$

\eq{
	D \tE (\tGa)(\tPhi)= \dotp{\tPhi}{\nabla_{h^1(\tilde{\GA})} \tE(\tGa)}_{h^1(\tilde{\GA})}
	= 
	\dotp{\tPhi}{\nabla\tE(\tGa)}_{\mathbb{C}^n}
}
and, by~\eqref{discreteprod}, we get 
\eq{
	\nabla_{h^1(\tilde{\GA})} \tE(\tGa) = U_{\tGa}^{-1}\nabla \tE(\tGa)\,.
}
\end{proof}

\subsection{Discrete Piecewise-rigid Curve Matching}

This section first describes in a general setting the discrete Finsler gradient over finite-element spaces, then specializes it to the piecewise rigid penalty for the matching problem, and lastly gives the explicit formula of the corresponding functionals to be minimized numerically. 

\paragraph{Discrete piecewise-rigid penalty.}

To re-write conveniently the discrete Finsler gradient optimization~\eqref{eq-finsler-grad-discr}, we introduce the following finite-dimensional operators.

\begin{defn}[\BoxTitle{Discrete operators}]\label{dfn-discr-op} For all $\Ga=P_1(\tGa), \Phi=P_1(\tPhi)$ we define
	\begin{align*}
		\tV_{\tGa}(\tPhi) &= TV_{\GA}\left(\frac{\d \Phi}{\d \GA}\cdot {\bf n}_{\Ga} \right), \qquad \tV_{\tGa}: \CC^n\rightarrow \RR   \\
		\tL_{\tGa}(\tPhi) &=P_0^{-1}(L_{\GA}^+(\Phi)),  
			\qquad \tL_{\tGa}: \CC^n\rightarrow \RR^n \\
		\tPi_{\tGa}(\tPhi) &=P_0^{-1}(\Pi_{\GA}(\Phi)), 
			\qquad \tPi_{\tGa}: \CC^n\rightarrow \CC^n
	\end{align*}
\end{defn}

The following proposition uses these discrete operators to compute the discrete Finsler penalty and constraint defined in~\eqref{defL-discrete}. 

\begin{prop}
We set $\tR_{\tGa}(\tPhi)= R_{P_1(\Ga)}(P_1(\Phi))$. One has 
\eql{\label{eq-discr-pr-penalty}
	\tR_{\tGa}(\tPhi) = \tV_{\tGa}(\tPhi) + \iota_{\tilde\Cc_{\tGa}}(\tPhi)
	\qwhereq
	\tilde\Cc_{\tGa} = \enscond{ \tPhi \in \CC^n }{ \tL_{\tGa}(\tPhi)=0 }.
}
\end{prop}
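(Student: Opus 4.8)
The plan is to unwind the definitions of the discrete penalty $\tR_{\tGa}$ and the discrete operators, and simply verify that applying $P_1$ to everything recovers the continuous piecewise-rigid penalty $R_\GA$ from Definition~\ref{defn1}. First I would recall that, by~\eqref{defL-discrete}, $\tR_{\tGa}(\tPhi) = R_{P_1(\tGa)}(P_1(\tPhi))$, so writing $\GA = P_1(\tGa)$ and $\Phi = P_1(\tPhi)$ the task is to show $R_\GA(\Phi) = \tV_{\tGa}(\tPhi) + \iota_{\tilde\Cc_{\tGa}}(\tPhi)$. By~\eqref{eq-piecewise-rigid-penalty}, $R_\GA(\Phi) = TV_\GA\big(\frac{\d\Phi}{\d\GA}\cdot\ngam\big) + \iota_{\Cc_\GA}(\Phi)$, and by Definition~\ref{dfn-discr-op} the first term is exactly $\tV_{\tGa}(\tPhi)$ by construction, so that part is immediate.

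The remaining point is to match the indicator functions, i.e.\ to show $\Phi \in \Cc_\GA \iff \tL_{\tGa}(\tPhi) = 0$. Here I would use the explicit finite-element structure: for $\GA = P_1(\tGa) \in \Bb_n$, equation~\eqref{arc-length} gives $\GA'(s) = \sum_i \Delta^+(\tGa)_i \zeta_i$, which is piecewise constant, so $\GA$ is piecewise affine and hence $C^1$ on the interior of each element $(\tfrac{i-1}{n},\tfrac{i}{n})$, with $\tgam$ constant equal to $\Delta^+(\tGa)_i/|\Delta^+(\tGa)_i|$ there; similarly $\Phi = P_1(\tPhi)$ is piecewise affine with $\frac{\d\Phi}{\d\GA}$ piecewise constant by~\eqref{arc-length-2}. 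Consequently the one-sided limits defining $L_\GA^+(\Phi)(s)$ and $L_\GA^-(\Phi)(s)$ in~\eqref{eq-operator-L} exist at every node, and on each open element the quantity $\frac{\d\Phi}{\d\GA}\cdot\tgam$ is a constant, so $L_\GA^+(\Phi)$ is exactly the piecewise-constant function with value on $[\tfrac{i}{n},\tfrac{i+1}{n}]$ equal to that constant; by Definition~\ref{dfn-discr-op}, $\tL_{\tGa}(\tPhi) = P_0^{-1}(L_\GA^+(\Phi))$ is the vector of these $n$ values. Since on a piecewise-affine curve one has $L_\GA^+(\Phi)(s) = L_\GA^-(\Phi)(s)$ for all $s$ (both equal the relevant elementwise constant, and at the nodes the condition $\frac{\d\Phi}{\d\GA}\cdot\tgam = 0$ on the two adjacent elements captures both one-sided limits), the condition $L_\GA^+(\Phi) = L_\GA^-(\Phi) = 0$ defining $\Cc_\GA$ in~\eqref{eq-constr-C-Ga} is equivalent to the vanishing of all $n$ components of $\tL_{\tGa}(\tPhi)$, i.e.\ to $\tPhi \in \tilde\Cc_{\tGa}$. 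This gives $\iota_{\Cc_\GA}(\Phi) = \iota_{\tilde\Cc_{\tGa}}(\tPhi)$ and the proposition follows.

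The only mild subtlety — and the part I would be most careful about — is the treatment of the articulation behaviour at the nodes $s = i/n$: on a piecewise-affine curve $\tgam$ itself may jump there, so one must check that $\tL_{\tGa}$ as defined (taking the right limit, via $L_\GA^+$) together with the requirement that it vanish identically is genuinely equivalent to killing $\frac{\d\Phi}{\d\GA}\cdot\tgam$ on \emph{every} element, including capturing $L_\GA^-$. This is true because $\tL_{\tGa}(\tPhi) = 0$ forces the elementwise constant $\frac{\d\Phi}{\d\GA}\cdot\tgam$ to vanish on each $[\tfrac{i}{n},\tfrac{i+1}{n}]$, and then automatically $L_\GA^-(\Phi)(s) = 0$ at every node as well since the left limit at $i/n$ is the value on the preceding element. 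Everything else is routine bookkeeping with the bijections $P_0$, $P_1$.
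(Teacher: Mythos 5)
Your proof is correct and follows essentially the same route as the paper: unwind \eqref{defL-discrete} and \eqref{eq-piecewise-rigid-penalty}, identify the $TV$ term with $\tV_{\tGa}$ by Definition~\ref{dfn-discr-op}, and check that $\Phi\in\Cc_\GA$ is equivalent to $\tL_{\tGa}(\tPhi)=0$ via the piecewise-constant structure of $\frac{\d\Phi}{\d\GA}\cdot\tgam$ (the paper defers this last equivalence to Proposition~\ref{constraints-discrete}, whereas you argue it inline). The only cosmetic slip is the sentence asserting $L_\GA^+(\Phi)(s)=L_\GA^-(\Phi)(s)$ for all $s$ — at a node the two one-sided limits live on different elements and need not coincide a priori — but your subsequent discussion handles this correctly, since vanishing of all elementwise constants kills both limits everywhere.
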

\begin{proof} 
Denoting $\Ga=P_1(\tGa), \Phi=P_1(\tPhi)$, by~\eqref{defL-discrete}, we have	
$$\tR_{\tGa}(\tPhi) = R_{\Ga}(\Phi)= TV_{\GA}\left(\frac{\d \Phi}{\d \GA}\cdot {\bf n}_{\Ga} \right)  + \iota_{\Cc_{\Ga}}(\Phi) = \tV_{\tGa}(\tPhi)  + \iota_{\tilde\Cc_{\tGa}}(\tPhi)$$
where 
$$\tilde\Cc_{\tGa}=\enscond{ \tPhi \in \CC^n }{ \tL_{\tGa}(\tPhi)=0 }.$$

\end{proof}

The following proposition gives explicit formulae for the discrete operators introduced in Definition~\ref{dfn-discr-op}.

\begin{prop}\label{constraints-discrete} For every $\tGa, \tPhi\in\CC^n$, we consider $\GA=P_1(\tGa)\in\Bb_n$, $\Phi=P_1(\tPhi)\in T_\GA \Bb_n$. One has 
\begin{align}\label{eq-expression-tL}
	\tL_{\tGa}(\tPhi)_i &= \dotp{\frac{\Delta^+(\tPhi)_i}{|\Delta^+(\tGa)_i|} }{ \frac{\Delta^+ (\tGa)_i}{|\Delta^+ (\tGa)_i|} }\, , \\
	\label{eq-expression-tPi}
	\tPi_{\tGa}(\tPhi)(s) &= \sum_{i=1}^n \langle \tPhi_i\xi_i(s) + \tPhi_{i+1} \xi_{i
+1}(s),  (\tilde{\bf n}_\ga)_i \rangle(\tilde{\bf n}_\ga)_i \zeta_i(s) \, ,
	  \\
	\label{eq-expression-tV}
		\tV_{\tGa}(\tPhi) & 
		= \norm{\Delta^- ( \tilde H_{\tGa}(\tPhi) ) }_{\ell^1} 
		= 
		\sum_{i=1}^n
		\left| 
			\tilde H_{\tGa}(\tPhi)_i
			-
			\tilde H_{\tGa}(\tPhi)_{i-1}
		\right|\, , \\
	\mbox{where}\;\;\tilde H_{\tGa}(\tPhi)_i & := \dotp{ \frac{\Delta^+(\tPhi)_i}{|\Delta^+(\tGa)_i|} }{ (\tilde{\bf n}_\ga)_i }
\end{align}
and where $\tilde{\bf n}_\ga$ denotes the vector of the coordinates of ${\bf n}_\ga$ with respect to the basis of $\mathbb{P}_0$.
\end{prop}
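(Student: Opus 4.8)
The plan is to exploit that, for $\GA=P_1(\tGa)$ and $\Phi=P_1(\tPhi)$, both maps are \emph{affine} on each elementary cell $I_i$ (the support of $\zeta_i$, indices understood modulo $n$), so that every geometric quantity entering the operators of Definition~\ref{dfn-discr-op} is piecewise constant and the claimed identities amount to reading off coordinates in the finite-element bases. First I would record the cell-wise picture: on $I_i$ only $\xi_i$ and $\xi_{i+1}$ are supported, hence $\Phi(s)=\tPhi_i\xi_i(s)+\tPhi_{i+1}\xi_{i+1}(s)$ there, while by~\eqref{first-derivative-approx},~\eqref{arc-length},~\eqref{arc-length-2} and~\eqref{tan-norm} the derivatives are constant on $I_i$, namely $\GA'\equiv\Delta^+(\tGa)_i$, $\Phi'\equiv\Delta^+(\tPhi)_i$, $\tgam\equiv\Delta^+(\tGa)_i/|\Delta^+(\tGa)_i|$, $\ngam\equiv(\tilde{\bf n}_\ga)_i=\rot{\tgam}$, and $\frac{\d\Phi}{\d\GA}\equiv\Delta^+(\tPhi)_i/|\Delta^+(\tGa)_i|$ (all denominators nonzero since $\GA$ is a regular discrete curve).

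Given this, $\tL_{\tGa}$ is immediate: the scalar $\frac{\d\Phi}{\d\GA}\cdot\tgam$ is constant on the interior of each $I_i$, so the one-sided limit in~\eqref{eq-operator-L} defining $L_\GA^+(\Phi)$ returns that constant; hence $L_\GA^+(\Phi)=\sum_i\langle\Delta^+(\tPhi)_i/|\Delta^+(\tGa)_i|,\ \Delta^+(\tGa)_i/|\Delta^+(\tGa)_i|\rangle\,\zeta_i\in\PP_{0,n}$, and applying $P_0^{-1}$ gives~\eqref{eq-expression-tL}. Likewise, substituting the cell-wise expressions of $\Phi$ and $\ngam$ directly into the definition~\eqref{normal-proj} of $\Pi_\GA$ yields, on $I_i$, $\Pi_\GA(\Phi)(s)=\langle\tPhi_i\xi_i(s)+\tPhi_{i+1}\xi_{i+1}(s),\ (\tilde{\bf n}_\ga)_i\rangle\,(\tilde{\bf n}_\ga)_i$, which assembled with the $\zeta_i$ is exactly~\eqref{eq-expression-tPi}.

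For $\tV_{\tGa}$ I would first observe that $\frac{\d\Phi}{\d\GA}\cdot\ngam$ is piecewise constant, equal to $\tilde H_{\tGa}(\tPhi)_i=\langle\Delta^+(\tPhi)_i/|\Delta^+(\tGa)_i|,\ (\tilde{\bf n}_\ga)_i\rangle$ on $I_i$, i.e. $\frac{\d\Phi}{\d\GA}\cdot\ngam=\sum_i\tilde H_{\tGa}(\tPhi)_i\,\zeta_i$. It then remains to compute the total variation on $\Circ$ of such a step function: its distributional derivative has no absolutely continuous and no Cantor part, being a finite sum of point masses carried by the nodes, the mass at $i/n$ equal to $\tilde H_{\tGa}(\tPhi)_i-\tilde H_{\tGa}(\tPhi)_{i-1}$; therefore $TV_\GA\!\left(\frac{\d\Phi}{\d\GA}\cdot\ngam\right)=\sum_i\big|\tilde H_{\tGa}(\tPhi)_i-\tilde H_{\tGa}(\tPhi)_{i-1}\big|$, which is~\eqref{eq-expression-tV}.

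All of this is a sequence of explicit substitutions, so there is essentially no obstacle; the only step requiring a little care is the last one, where one must invoke the one-dimensional $BV$ structure theorem to be sure that the total variation of a piecewise-constant map equals precisely the sum of the absolute jumps at the nodes (no spurious contribution and, in particular, the correct treatment of the periodic node, i.e. the jump between cell $n$ and cell $1$). A minor accompanying remark is that the whole computation presupposes $|\Delta^+(\tGa)_i|>0$ for every $i$, which is the discrete analogue of condition~\eqref{cond-derivative} and is what makes $\tgam$, $\ngam$ and $\frac{\d\Phi}{\d\GA}$ well defined cell by cell.
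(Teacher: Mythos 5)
Your proof is correct and follows essentially the same route as the paper: substitute the finite-element expansions of $\Phi$, $\frac{\d\Phi}{\d\GA}$, $\tgam$ and $\ngam$ cell by cell, read off the piecewise-constant coordinates to get \eqref{eq-expression-tL} and \eqref{eq-expression-tPi}, and identify $TV_\GA$ of the resulting step function with the sum of absolute jumps at the nodes for \eqref{eq-expression-tV}. Your explicit remarks on the periodic node and on the nondegeneracy condition $|\Delta^+(\tGa)_i|>0$ are welcome precisions that the paper leaves implicit.
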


\begin{proof} 
\textbf{(Proof of~\eqref{eq-expression-tL})} Using~\eqref{first-derivative-approx}  the first derivative of $\Phi$ can be written (with respect to the basis of $\PP_{0,n}$) as 
\eq{
	\frac{\d \Phi}{\d \GA} = \sum_{i=1}^n \frac{\Delta^+(\tPhi)_i}{|\Delta^+(\tGa)_i|} \zeta_i\,
}
which implies that
\eq{
	L_{\GA}^+(\Phi)= \frac{\d \Phi}{\d \GA}\cdot \tgam = \sum_{i=1}^n \langle\frac{\Delta^+(\tPhi)_i}{|\Delta^+(\tGa)_i|} ,\frac{\Delta^+ (\tGa)_i}{|\Delta^+ (\tGa)_i|} \rangle\zeta_i\,.
}
Then, by the definitions of $L_{\GA}^{+(-)}$,  conditions $L_{\Ga}^{+(-)}(\Phi)=0$ become
\eq{
	\langle\frac{\Delta^+(\tPhi)_i}{|\Delta^+(\tGa)_i|} ,\frac{\Delta^+ (\tGa)_i}{|\Delta^+ (\tGa)_i|} \rangle=0 
	\quad \quad \foralls i=1,...,n,
}
which is equivalent to $\tL_{\tGa}(\tPhi)=0$.\\
\textbf{(Proof of~\eqref{eq-expression-tPi})} By~\eqref{representation-phi} and~\eqref{tan-norm}, we get 
\eq{
\begin{array}{ll}
\Pi_\GA(\Phi)(s) & = \displaystyle{\langle \sum_{i=1}^n \tPhi_i\xi_i(s), \sum_{i=1}^n (\tilde{\bf n}_\ga)_i  \zeta_i(s) \rangle  \sum_{i=1}^n (\tilde{\bf n}_\ga)_i  \zeta_i(s)\,}\\
& =\displaystyle{  \sum_{i=1}^n \langle \tPhi_i \xi_i(s) + \tPhi_{i+1} \xi_{i+1}(s) ,  (\tilde{\bf n}_\ga)_i \rangle(\tilde{\bf n}_\ga)_i \zeta_i(s)}
\end{array}
}
which proves the result. \\
\textbf{(Proof of~\eqref{eq-expression-tV})} By~\eqref{first-derivative-approx} and~\eqref{tan-norm}, we get
\begin{align*}
  TV_\GA\left( \frac{\d \Phi}{\d \GA}\cdot {\bf n}_\GA\right)
  &=  
  TV_\GA\left( \sum_{i=1}^n \langle\frac{\Delta^+(\tPhi)_i}{|\Delta^+(\tGa)_i|},(\tilde{\bf n}_\ga)_i \rangle\zeta_i \right)\\
  &= \sum_{i=1}^n
		\left| 
			\tilde H_{\tGa}(\tPhi)_i
			-
			\tilde H_{\tGa}(\tPhi)_{i-1}
		\right|
\end{align*}
where we used the fact that the  total variation for piecewise constant functions coincides with the sum of jumps sizes.
\end{proof}

\subsection{Calculation of the Discrete Finsler Gradient}
\label{calculation-finsler}

One can minimize the matching energy $\tE$ defined in~\eqref{eq-energy-matching-discr} using the Finsler flow $\{\tGa_k\}$ of~\eqref{eq-finsler-flow-discr}. This requires computing at each step $k$ the Finsler gradient~\eqref{eq-finsler-grad-discr} for the piecewise-rigid penalty $\tR_{\tGa}$ defined in~\eqref{eq-discr-pr-penalty}. Solving~\eqref{eq-finsler-grad-discr} at each step in turn requires the resolution of a finite dimensional convex problem, and the functional to be minimized is explicitly given with closed form formula in Proposition~\ref{constraints-discrete}.

Several convex optimization algorithms can be used to solve~\eqref{eq-finsler-grad-discr}. A convenient method consists in recasting the problem into a second order cone program by introducing additional auxiliary variables $(\tilde{\Phi}, \tilde{S},\tilde{Y},\tilde{T})$ as follow
\eq{
	\underset{ (\tilde{\Phi}, \tilde{S},\tilde{Y},\tilde{T}) \in \CC^{2n}\times\RR^{2n} }{ {\rm Min} }
		\dotp{\tilde{Y} }{ {\bf 1}}_{\mathbb{C}^n} 	
		\qwhereq {\bf 1}=(1,...,1) \in \RR^n
}
where the minimum is taken under the following set of affine and conic constraints
\begin{align*}
	& -\tilde{Y}_i\leq \Delta^-( \tilde{H}_{\tilde \Gamma} (\tilde \Phi) )_i  \leq \tilde{Y}_i \;,\quad \quad  \foralls i=1,...,n \\
	& \tL_{\tGa}(\tilde{\Phi}) =0 \\
	& \tilde{S} =  U_{\tGa}^{1/2}(\tPi_{\tilde{\Ga}}(\tilde{\Phi}) - \tPi_{\tilde{\Ga}}(\nabla_{h^1(\tilde{\GA})} \tE(\tilde{\Ga}))) \\
	& \langle \tilde{T}, {\bf 1}\rangle_{\mathbb{C}^n} \leq \rho^2 \normbig{\tPi_{\tGa}( \nabla_{h^1(\tilde{\GA})} \tE(\tilde{\ga})) }_{h^1(\tilde{\ga})}^2  \\
	& (\tilde{S}_i,\tilde{T}_i) \in 
		\enscond{ (s,t) \in \CC \times \RR }{ |s|^2 \leq t } ,\quad \foralls i=1,...,n.
\end{align*}
We point out that the variable $\tilde{S}$ is defined by the mass matrix $U_{\tGa}$ \eqref{rigidity-mass-h1} because of the relationship \eqref{discreteprod-sob}. For the numerical simulation, we use an interior point solver, see~\cite{convex-optimization}. These interior points algorithms are powerful methods to solve medium scale SOCP problems and work remarkably well for $n$ up to several thousands, which is typically the case for the curve matching problem.

\section{Numerical Examples}\label{examples}

In this section we give some numerical examples to point out the properties of the piecewise rigid Finsler evolution. 

It should be noted that the resulting sequence $\{\Gamma_k\}$ depends on the choice of the step sizes $\{\tau_k\}$, which is left to the user and should only comply with the Wolfe conditions \eqref{Wolfe}.

Numerically, we observe in practice  that choosing small enough (according to the Wolfe condition) step sizes $\tau_k$ always provides consistent evolutions. This phenomenon is related to the existence of a limiting gradient flow (as highlighted in Remark \ref{grad_flow}), and the depicted evolutions are intended to show an approximation of this flow.

For the numerical examples shown in this section and in Section \ref{NE}, we used a fixed finite element discretization as detailed in Section \ref{discretization} (with $n=1280$). 
	This corresponds to 
 	imposing a fixed common parameterization of the discretized curves generated by the iterations. Note however that applications
	to more complicated imaging problems might require re-parameterizing the curves from time to time during the iterations
	of the gradient descent \eqref{sequence}. This is important when deading with complicated shapes since the parameterization 
	might become ill-conditionned, 
	which can deteriorate the numerical accuracy of the scheme. 

\subsection{Influence of $\rho$} 
\label{sec-influ-rho}

To exemplify the main properties of the piecewise rigid Finsler flow, we introduce a synthetic example where we replace in the definition~\eqref{grad-rigid} of the Finsler gradient $\nabla_{R_\GA} E(\GA)$ (more precisely in the definition~\eqref{eq-dev-constr} of the constraint $\Ll_\Ga$) the  gradient $\nablad E(\Ga)$ by the vector field $F(\Ga) \in T_\Ga \Bb$ defined as
\eql{\label{field-const}
	F(\Ga) : s \in \Circ \mapsto -\pa{ 5\GA_1(s), 1000(\Ga_2(s)-1/2)^2} \in \RR^2
}
\eq{
	\qwhereq
	\Ga(s) = (\Ga_1(s),\Ga_2(s)) \in \RR^2.
}
The initial flow associated to this vector field reads
\eql{\label{eq-synthetic-flow}
	\Ga_{k+1} = \Ga_k - \tau_k F( \Ga_k )
}
for some small enough time steps $\tau_k>0$. Such a flow is represented in Figure~\ref{-F} where $\tau_k = 0.0005$ for every $k$.

\begin{figure}[!h]
\begin{center}
\includegraphics[width=.3\linewidth]{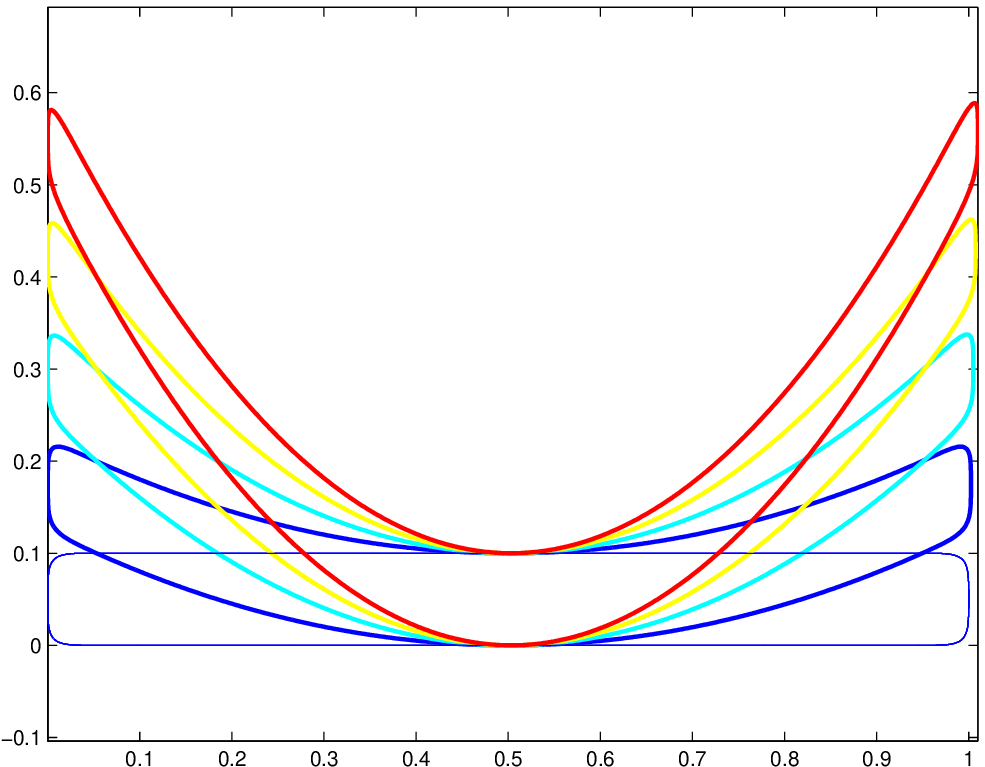}
\end{center}
\caption{\label{-F} Evolution generated by $-F$. }
\end{figure}

Figure~\ref{fig1} shows the impact of the parameter $\rho$ on this evolution. As $\rho$ increases, the evolution becomes increasingly piecewise rigid. For $\rho$ large enough, it is globally rigid, i.e. satisfies~\eqref{eq-rigid-flow} and  $\nabla_{R_{\Ga_k}} E(\Ga_k) \in \Rr_{\Ga_k}$ for all $k$, where $\Rr_{\Ga}$ is defined in~\eqref{eq-rigid-vector-space}. 

\begin{figure}[!h]
\begin{center}
\begin{tabular}{@{}c@{\hspace{5mm}}c@{}}
\includegraphics[width=.3\linewidth]{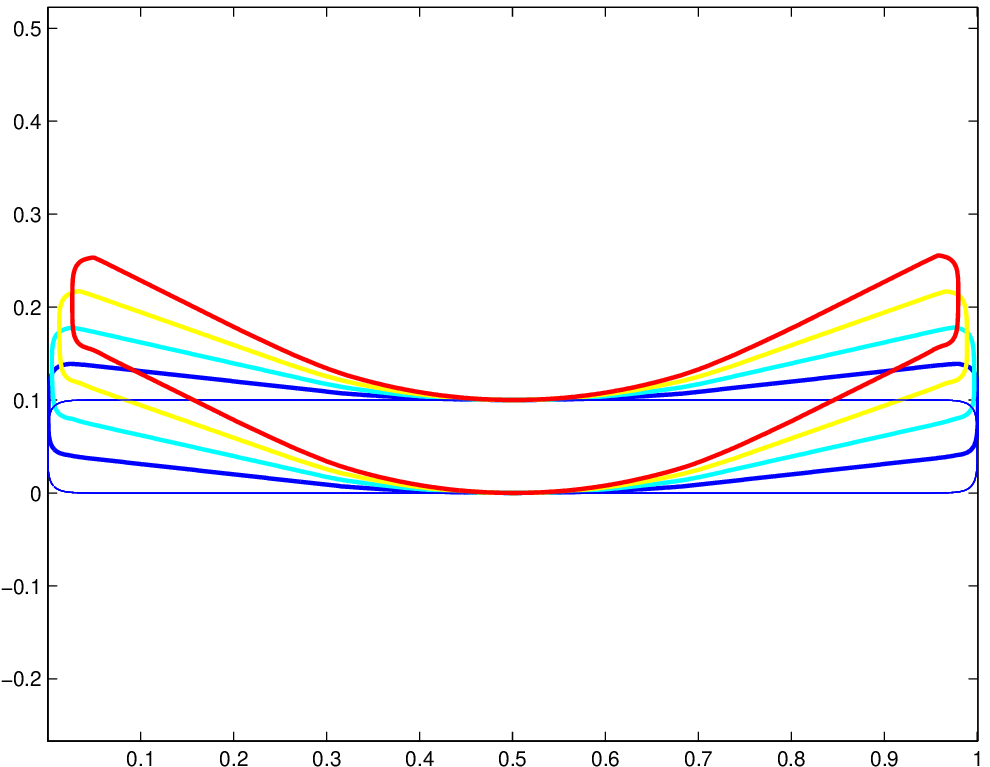}&
\includegraphics[width=.3\linewidth]{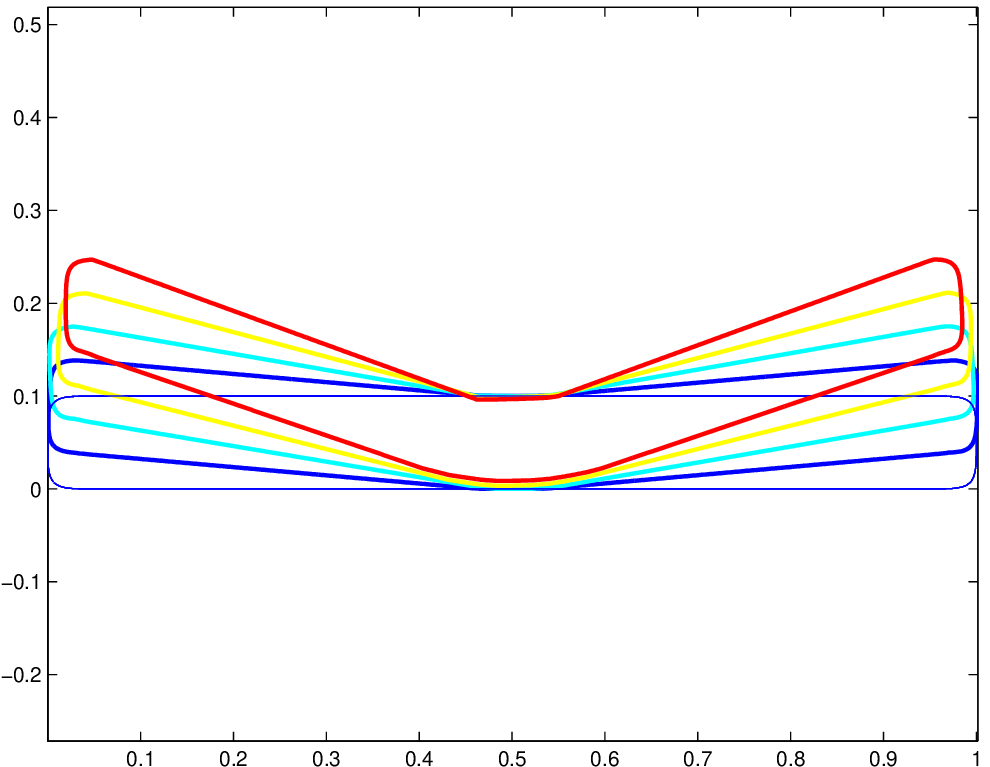}\\
$\rho=0.1$ & $\rho= 0.3$ \\
\includegraphics[width=.3\linewidth]{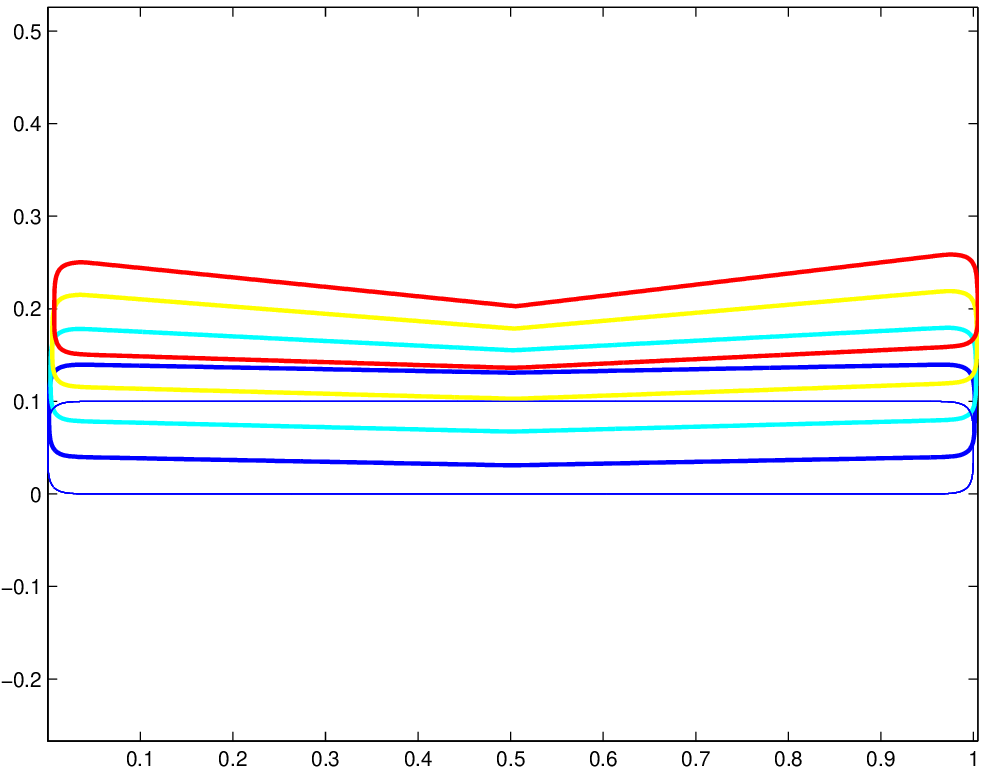}&
\includegraphics[width=.3\linewidth]{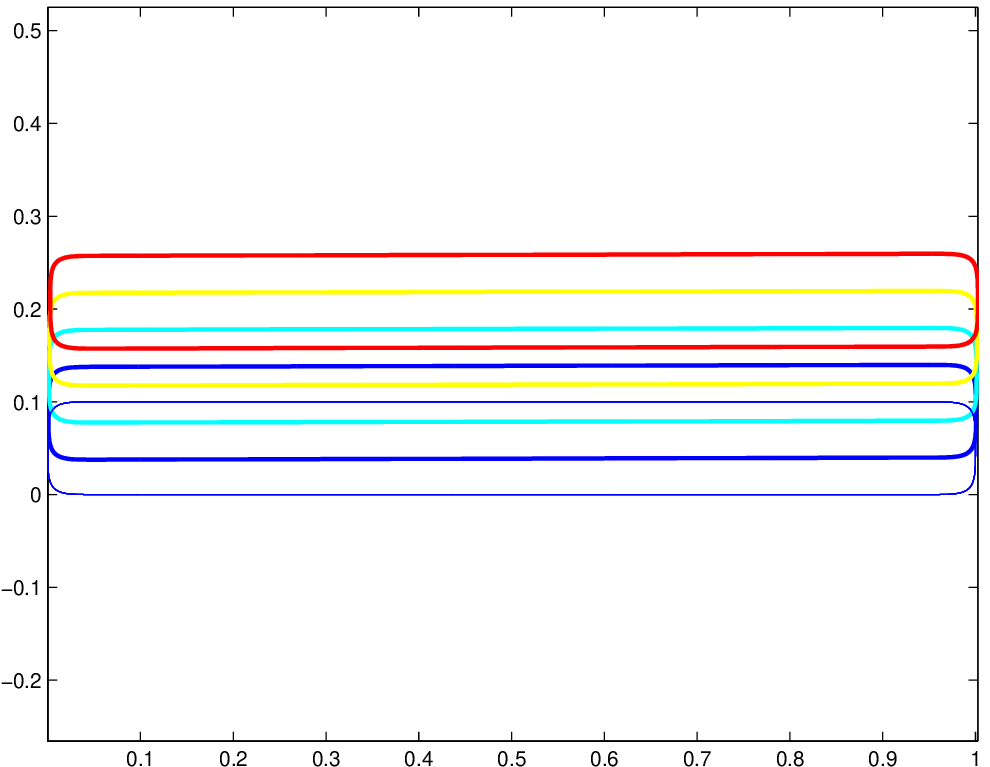} \\
  $\rho= 0.7$  & $\rho=0.9$
\end{tabular}
\end{center}
\caption{\label{fig1} Evolution for different values of $\rho$. }
\end{figure}

\subsection{Curve Registration} 
\label{sec-numerics-registration}

We now give an example of application of the Finsler flow to the curve matching problem described in Section~\ref{CM}. Figure~\ref{evolutions0} compares the results of the piecewise-rigid Finsler gradient with the Sobolev Riemannian gradient detailed in Remark~\eqref{rem-fins-sob} which is very similar to the one introduced in~\cite{sundaramoorthi-sobolev-active,charpiat-generalized-gradient}.

In order to obtain good matching results, it is important to select the parameters $(\tau,\si,\de)$ (see~\eqref{kernel} and~\eqref{eq-synthetic-flow}) in accordance to the typical size of the features of the curves to be matched.  For each method, we have manually tuned the parameters $(\tau,\si,\de)$ in order to achieve the best matching results. Choosing a large value of $\si$ and a smaller value for $\delta$ is useful to capture shapes with features at different scales, which is the case in our examples. 
 
\begin{figure}[h]
\centering
\begin{tabular}{@{}c@{\hspace{1mm}}c@{\hspace{1mm}}c@{\hspace{1mm}}c@{\hspace{1mm}}c@{}}
\includegraphics[height=.19\linewidth, angle=270]{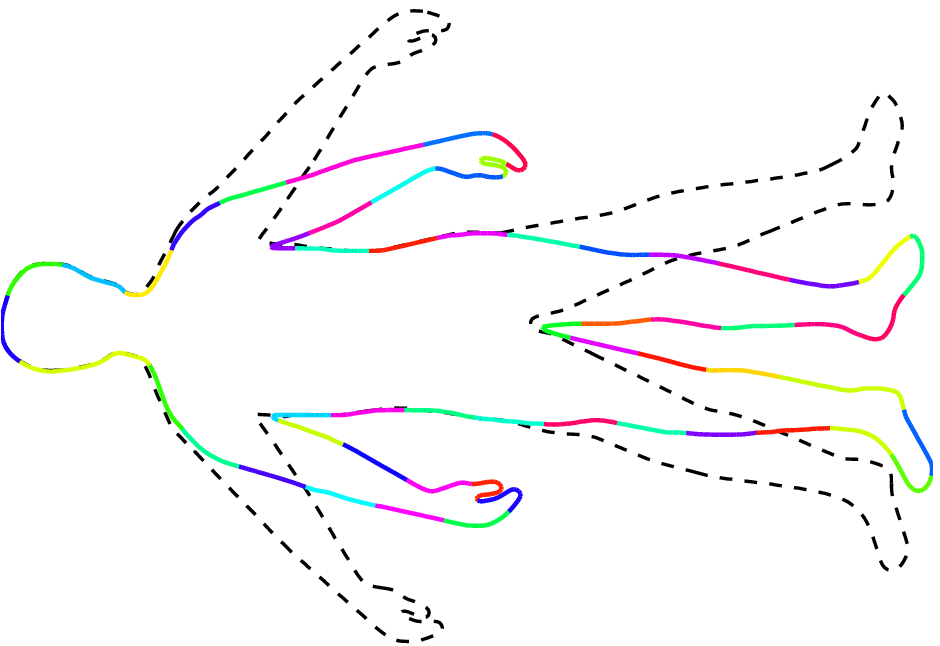}&
\includegraphics[height=.19\linewidth, angle=270]{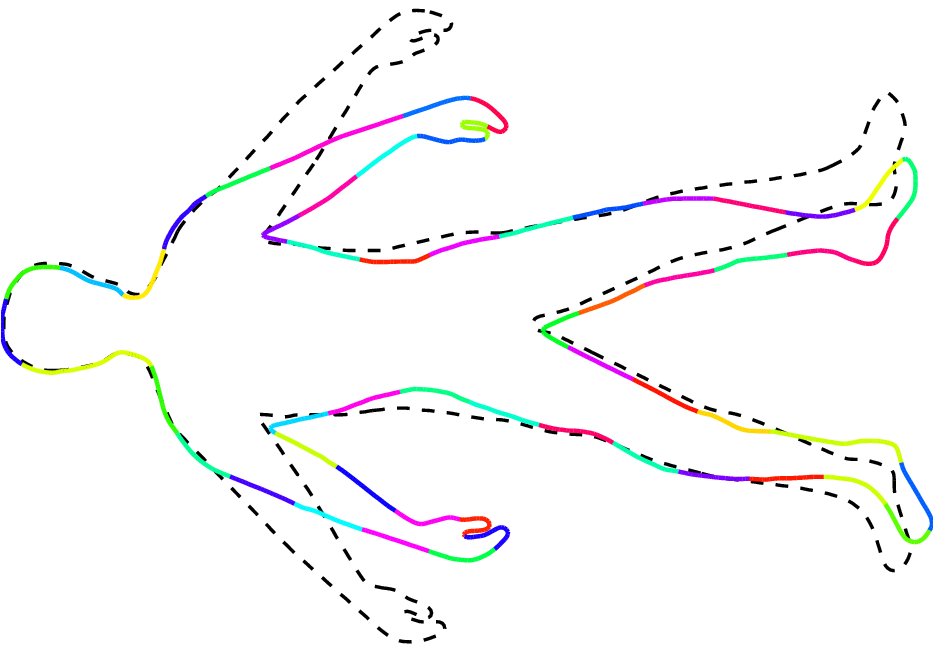}&
\includegraphics[height=.19\linewidth, angle=270]{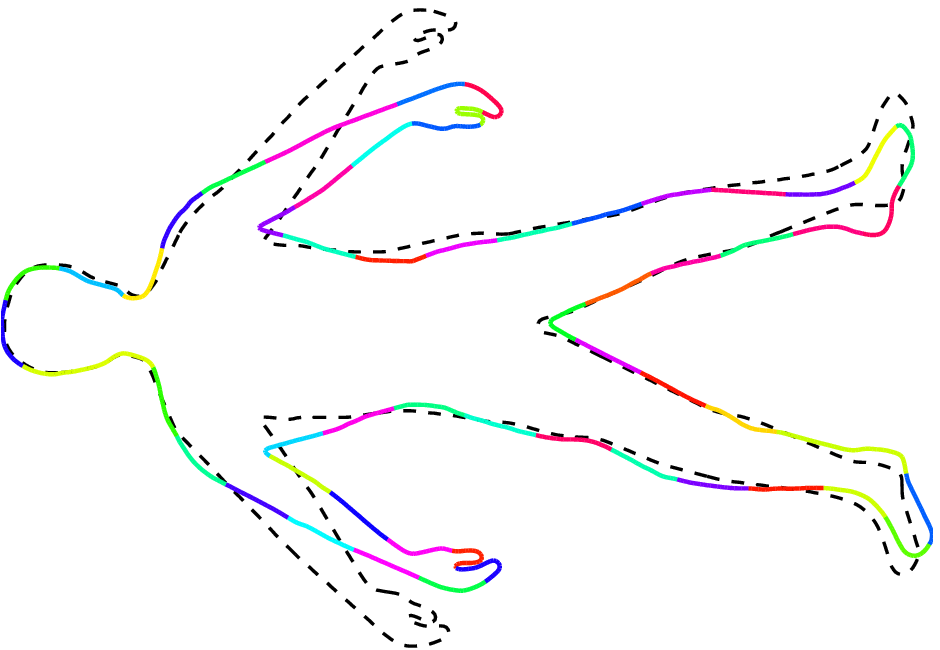}&
\includegraphics[height=.19\linewidth, angle=270]{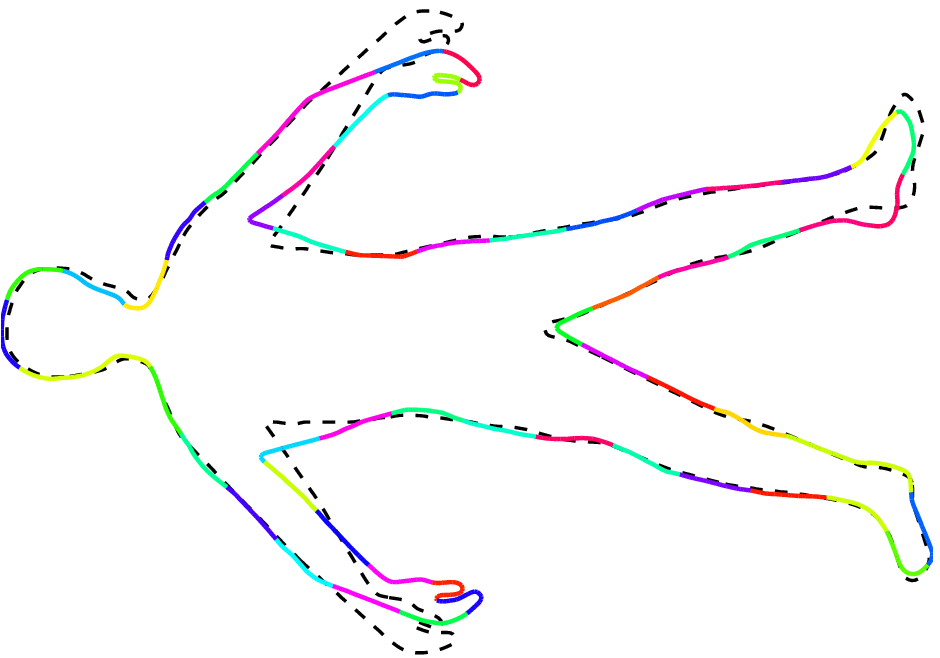}&
\includegraphics[height=.19\linewidth, angle=270]{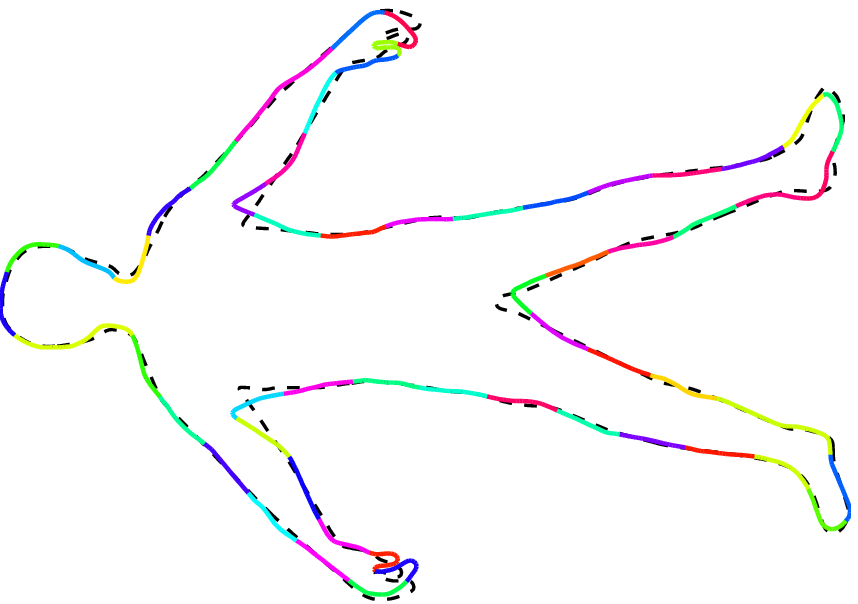}\\
\includegraphics[height=.19\linewidth, angle=270]{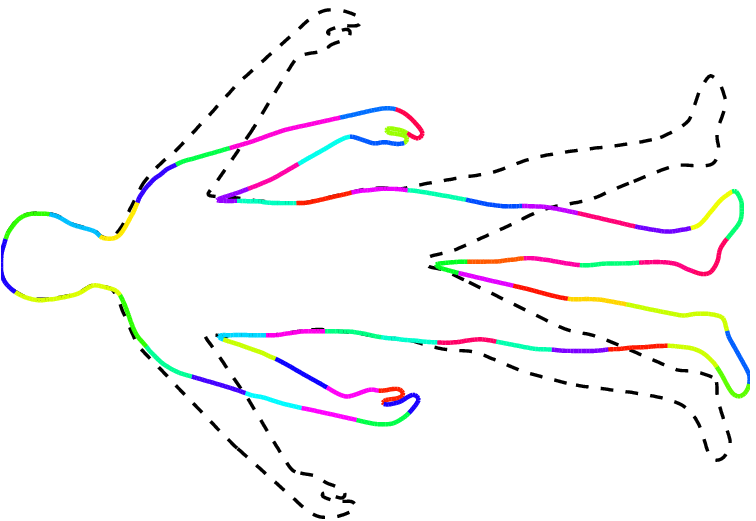}&
\includegraphics[height=.19\linewidth, angle=270]{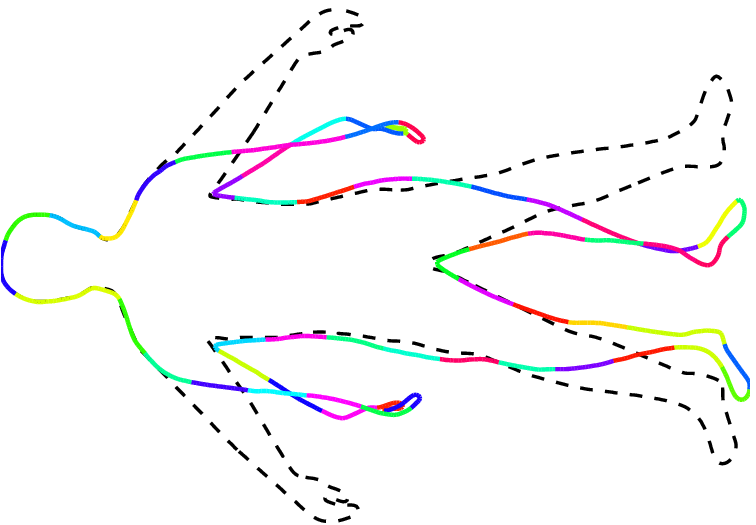}&
\includegraphics[height=.19\linewidth, angle=270]{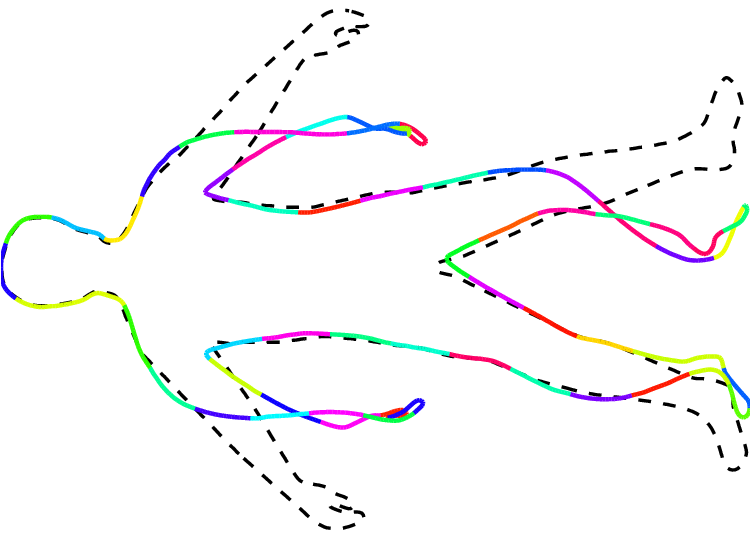}&
\includegraphics[height=.19\linewidth, angle=270]{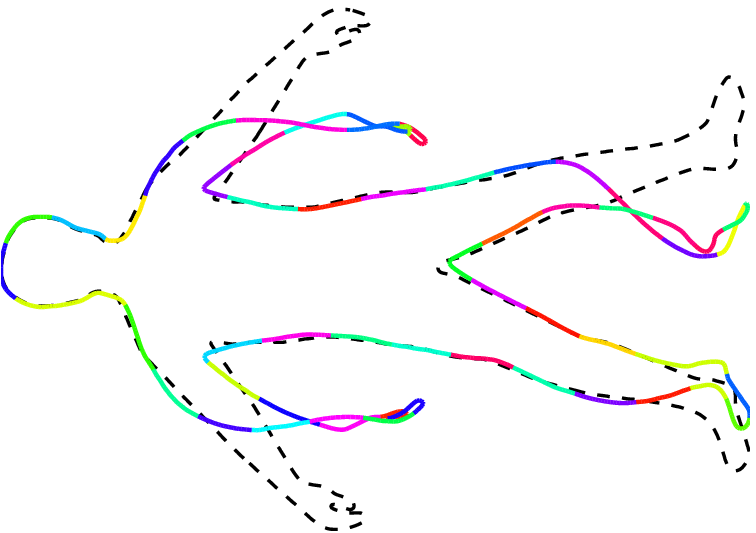}&
\includegraphics[height=.19\linewidth, angle=270]{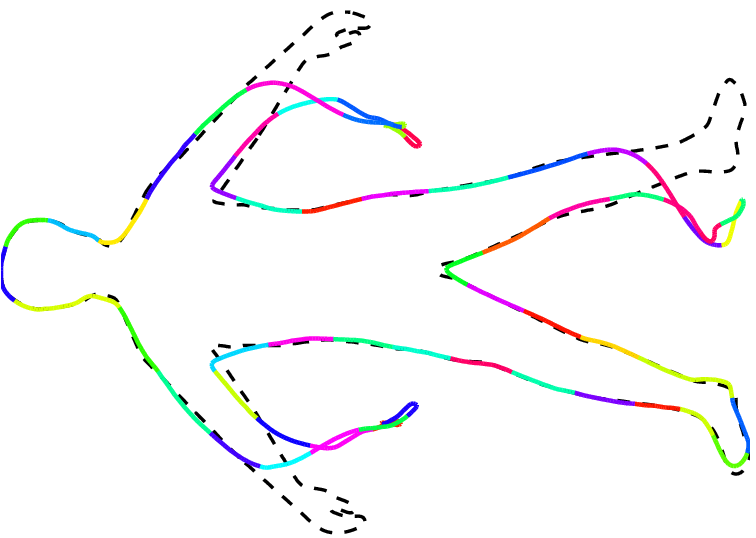}
\end{tabular}
\caption{\label{evolutions0} Finsler evolution (top) with $\rho=0.8$ and  Sobolev evolution (bottom) for different step $k$ of the flow. Each image displays the target curve $\La$ (dash line) and the current curve $\Ga_k$ (solid line). The energy is computed using $\sigma=0.8$, $\delta=0.04$. }
\vspace{0.3cm}
\end{figure}

The piecewise rigid gradient is particularly efficient in this setting where the curves to be matched are naturally obtained by approximate articulations, which are well approximated by piecewise rigid deformations. Note however that our method does not necessitate a prior segmentation of the shape into disjoint areas undergoing rigid motions, i.e. the location of the articulations does not need to be known beforehand. The piecewise rigid matching is obtained solely by minimizing the distance energy $E(\Ga)$ to the target curve $\La$.  

The Finsler gradient thus allows to avoid poor local minima and perform an overall good global matching. In contrast the Sobolev gradient flow is trapped in a poor local minimum and the matching has failed. Note, however, that the matching achieved by the Finsler gradient is not perfect. Some local defects near corners are mostly due to the strong constraint $L_\Ga(\Phi)=0$ which enforces the exact conservation of the length of the curve. This constraint is alleviated in Section~\ref{similarity}, which presents a piecewise similarity Finsler gradient which leads to better matching results. 

\section{$BV^2$-piecewise Similarity Motions}\label{similarity}

In order to improve the matching results we augment our model by allowing the curve to shrink or to lengthen during the evolution. The idea consists in considering evolution by piecewise similarity motions instead of the rigid deformations considered in Section~\ref{PWR}.  

\subsection{Similarity Curve Deformations}

We extend the rigid deformations considered in Section~\ref{GRD} to smooth evolutions $t \mapsto \Ga_t$ following a PDE~\eqref{eq-continuous-flow} that includes also a global scaling of the space. This evolution is said to obey a similarity transform if there exists a smooth function $\la : \RR \rightarrow \RR^+$ such that
\eql{\label{eq-similarity-flow}
	\foralls (s,s') \in \Circ \times \Circ, \quad
	\norm{\GA_t(s)-\GA_t(s')} = \la(t) \norm{\GA_0(s)-\GA_0(s')}.
} 
The following proposition states that the set of instantaneous motions $\Phi_t$ giving rise to a similarity evolution  is, at each time, a linear sub-space of dimension 4 of $T_{\Ga_t} \Bb$.

\begin{prop}
	The evolution~\eqref{eq-continuous-flow} satisfies~\eqref{eq-similarity-flow} if and only if, for all $t \in \RR$, $\Phi_t \in \Ss_{\GA_t}$ where  
	\eql{\label{eq:similarity}
		\Ss_\GA = \enscond{ \Phi \in T_\Ga \Bb }{
			\foralls s \in \Circ, \; \Phi(s) =  A \GA(s)  +  b:
			\;\text{for}\; 
			A \in \Ss_{2 \times 2}, \: b \in \R^2
		}
	}
	\eq{
		\qwhereq
		\Ss_{2 \times 2} = \enscond{
		\begin{pmatrix}
					\al& -\be\\
					\be&\al\\
			\end{pmatrix} \in \RR^{2 \times 2}
		}
		{ (\al,\be) \in \RR^2 }
	\,.}
\end{prop}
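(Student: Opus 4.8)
The plan is to follow the proof of Proposition~\ref{rigid-ch} verbatim, replacing the Euclidean group $E(2)$ by the group of orientation-preserving similarities $\mathrm{Sim}^{+}(2) = (\RR^{+}\times SO_2(\RR))\ltimes\RR^2$, and its Lie algebra by the space of affine vector fields $x\mapsto Ax+b$ with $A\in\Ss_{2\times 2}$ and $b\in\RR^2$: here the scalar part $\alpha I$ of $A$ comes from the homothety factor $\RR^{+}$ and the antisymmetric part $\beta J$ (with $J$ the standard generator of $\mathfrak{so}_2(\RR)$, i.e. the infinitesimal rotation) from the factor $SO_2(\RR)$. Throughout I assume $\GA_0$, and hence every $\GA_t$, has three non-collinear points; the degenerate cases (the curve lies on a line, or reduces to a point) are treated exactly as in Proposition~\ref{rigid-ch}.

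\emph{Necessity.} Assume~\eqref{eq-similarity-flow}. For a fixed $t$, the map $\GA_0(s)\mapsto\GA_t(s)$ multiplies every pairwise distance by $\la(t)>0$, so $\la(t)^{-1}$ times it preserves distances; by the classical description of planar isometries it is the restriction to $[\GA_0]$ of a unique $g_t\in\mathrm{Sim}(2)$ (uniqueness because a similarity is determined by the images of three affinely independent points). Since $g_0=\mathrm{id}$ and $t\mapsto\GA_t$ is continuous, $g_t$ stays in the identity component $\mathrm{Sim}^{+}(2)$. As in Proposition~\ref{rigid-ch}, $t\mapsto g_t$ inherits the smoothness of $t\mapsto\GA_t$; writing $g_t(x)=R_t x + c_t$ with $R_t\in\Ss_{2\times 2}\cap GL_2(\RR)$ and differentiating $\GA_t=g_t(\GA_0)$ yields $\Phi_t(s)=(\dot R_t R_t^{-1})\,\GA_t(s) + (\dot c_t - \dot R_t R_t^{-1} c_t)$. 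Because $\Ss_{2\times 2}\cap GL_2(\RR)$ is a Lie group with Lie algebra $\Ss_{2\times 2}$, we have $\dot R_t R_t^{-1}\in\Ss_{2\times 2}$, so $\Phi_t$ agrees on $\GA_t$ with an affine field $x\mapsto Ax+b$, $A\in\Ss_{2\times 2}$, i.e. $\Phi_t\in\Ss_{\GA_t}$.

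\emph{Sufficiency.} Conversely, assume $\Phi_t\in\Ss_{\GA_t}$ for all $t$, say $\Phi_t(s)=A(t)\GA_t(s)+b(t)$ with $A(t)\in\Ss_{2\times 2}$. Then~\eqref{eq-continuous-flow} becomes, pointwise in $s$, the linear ODE $\partial_t\GA_t(s)=A(t)\GA_t(s)+b(t)$, whose solution is $\GA_t(s)=R(t)\GA_0(s)+c(t)$ where $\dot R(t)=A(t)R(t)$, $R(0)=I$, and $c$ accounts for the inhomogeneous term. Since $\Ss_{2\times 2}$ is a commutative subalgebra (isomorphic to $\CC$), $R(t)=\exp\!\big(\int_0^t A(\tau)\,d\tau\big)\in\Ss_{2\times 2}$ is invertible, hence $R(t)=\mu(t)Q(t)$ with $\mu(t)>0$ and $Q(t)\in SO_2(\RR)$, so $\|R(t)v\|=\mu(t)\|v\|$ for every $v\in\RR^2$. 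Therefore $\|\GA_t(s)-\GA_t(s')\|=\mu(t)\|\GA_0(s)-\GA_0(s')\|$, which is~\eqref{eq-similarity-flow} with $\la(t)=\mu(t)$.

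The main obstacle is the necessity part: one must justify that a map scaling all pairwise distances by a common constant is genuinely a similarity of $\RR^2$ (not merely something acting so on the relevant points of the curve) and that $t\mapsto g_t$ is as regular as $\Phi_t$, so that the differentiation step is legitimate. Both are handled precisely as in the proof of Proposition~\ref{rigid-ch}; the only genuinely new ingredient is the identification of the Lie algebra of $\mathrm{Sim}^{+}(2)$ with the affine fields $x\mapsto Ax+b$, $A\in\Ss_{2\times 2}$.
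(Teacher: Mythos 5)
Your proof is correct and follows essentially the same route as the paper, which simply invokes the fact that the Lie algebra of the similarity group is $\R^2 \rtimes \Ss_{2\times 2}$ and refers back to the argument of Proposition~\ref{rigid-ch}. You merely make explicit the steps the paper leaves implicit (extension of the distance-scaling map to a unique similarity, smoothness of $t\mapsto g_t$, and the ODE integration for sufficiency), all of which check out.
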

\begin{proof}
Using the fact that the Lie algebra of the group of similarities is $\R^2 \rtimes \Ss_{2 \times 2}$, we obtain the desired result following the proof of Proposition \ref{rigid-ch}. 
\end{proof}

Analogously to Proposition~\ref{charactrigid}, the next proposition characterizes in an intrinsic manner the set~$\Ss_\Ga$.

\begin{prop}\label{similitude}
	For a $C^2$-curve $\Ga$, one has $\Phi \in \Ss_\GA$ if and only if $\Phi$ is $C^2$ and satisfies 
\begin{equation}\label{similarity0}
		\frac{\d K_\GA(\Phi)}{\dgs} = 0
		\end{equation}
	where we have introduced the following linear operator 
	\eq{	
		K_\GA(\Phi)=\left(\frac{\d \Phi}{\d \GA}\cdot \tgam, \frac{\d \Phi}{\d \GA}\cdot \ngam \right), 
		\quad \quad\foralls \Phi \in T_\GA\Bb\,.
	}
\end{prop}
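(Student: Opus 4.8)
The plan is to follow the proof of Proposition~\ref{charactrigid}, replacing the Euclidean group by the group of similarities; the only genuinely new ingredient is the way matrices of $\Ss_{2\times 2}$ act on the Frenet frame. The key elementary remark is that
\[
	\begin{pmatrix}\al & -\be\\ \be & \al\end{pmatrix}v \;=\; \al\,v + \be\,\rot{v}\,,\qquad \foralls v\in\RR^2,
\]
so that every $A\in\Ss_{2\times 2}$ commutes with $v\mapsto\rot{v}$ and, since $\ngam=\tgam^\bot$ by~\eqref{def-tangente}, sends the orthonormal frame $(\tgam,\ngam)$ of a $C^2$ immersed curve $\GA$ to $(\al\tgam+\be\ngam,\,-\be\tgam+\al\ngam)$. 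I use throughout the standing assumption (cf.\ Remark~\ref{rem-fins-sob} and~\eqref{cond-derivative}) that $|\GA'|$ does not vanish, so that $\tgam,\ngam$ are well-defined $C^1$ vector fields, $\GA'=|\GA'|\,\tgam$, and $\frac{\d\Phi}{\dgs}(s)=\Phi'(s)/|\GA'(s)|$.

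First I would establish the direct implication. If $\Phi\in\Ss_\GA$, write $\Phi(s)=A\GA(s)+b$ with $A=\begin{pmatrix}\al&-\be\\\be&\al\end{pmatrix}\in\Ss_{2\times 2}$ and $b\in\RR^2$; then $\Phi$ is $C^2$ because $\GA$ is, and differentiating with respect to arc length,
\[
	\frac{\d\Phi}{\dgs}(s)=\frac{A\GA'(s)}{|\GA'(s)|}=A\,\tgam(s)=\al\,\tgam(s)+\be\,\ngam(s)\,,
\]
whence $K_\GA(\Phi)(s)=(\al,\be)$ does not depend on $s$, i.e.\ $\frac{\d K_\GA(\Phi)}{\dgs}=0$.

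For the converse, I would assume $\Phi$ is $C^2$ with $\frac{\d K_\GA(\Phi)}{\dgs}=0$. Since $|\GA'|>0$ this forces $K_\GA(\Phi)$ to be constant, say $K_\GA(\Phi)\equiv(\al,\be)$, and because $\{\tgam(s),\ngam(s)\}$ is an orthonormal basis of $\RR^2$ for each $s$ one recovers $\frac{\d\Phi}{\dgs}(s)=\al\,\tgam(s)+\be\,\ngam(s)=A\,\tgam(s)$ with $A:=\begin{pmatrix}\al&-\be\\\be&\al\end{pmatrix}\in\Ss_{2\times 2}$. Multiplying by $|\GA'(s)|$ gives $\Phi'(s)=|\GA'(s)|A\,\tgam(s)=A\GA'(s)=(A\GA)'(s)$ for all $s$, so $\Phi-A\GA$ has vanishing derivative; since $\Circ$ is connected it equals a constant $b\in\RR^2$, i.e.\ $\Phi=A\GA+b\in\Ss_\GA$. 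The degenerate case where $\GA$ is constant (where $\Ss_\GA$ reduces to the constant deformations and the Frenet frame is undefined) is immediate and, as in Proposition~\ref{rigid-ch}, does not affect the statement.

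I do not expect a serious obstacle here: the argument is the rigid case of Proposition~\ref{charactrigid} together with the frame computation above. The only points that need a little care are the identification of the two scalar components of $K_\GA(\Phi)$ with the pair $(\al,\be)$ parameterizing $\Ss_{2\times 2}$, and the observation that the characterization is genuinely ``pointwise plus one integration'': knowing that $K_\GA(\Phi)$ is a constant vector is already enough to reconstruct $\Phi$ up to a translation. As a sanity check, this matches the dimension count $4=2+2$ announced for $\Ss_\GA$ — two degrees of freedom in the constant value of $K_\GA(\Phi)$ and two more in the translation $b$.
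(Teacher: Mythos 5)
Your proof is correct and follows essentially the same route as the paper: differentiate $\Phi=A\GA+b$ along arc length to identify $K_\GA(\Phi)$ with the constant pair $(\al,\be)$, and conversely observe that a constant $K_\GA(\Phi)$ forces $\frac{\d\Phi}{\dgs}=\al\tgam+\be\ngam=A\tgam$ with $A\in\Ss_{2\times 2}$, which integrates back to a similarity. The only difference is presentational — you separate the two implications and make the final integration step ($\Phi-A\GA$ constant) explicit, whereas the paper writes the argument as a chain of equivalences.
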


\begin{proof}
Given a curve $\GA\in C^2(\Circ,\RR^2)$, every   deformation $\Phi$ of $\Gamma$ which is the restriction, to the curve $\Gamma$,  of an instantaneous similarity motion, can be written as
\eq{
	\Phi(s) \; = \;  A \GA(s)  + b, \quad \quad \foralls s\in \Circ 
}
for some matrix $A=\begin{pmatrix}
\al& -\be\\
\be&\al\\
\end{pmatrix} $ and a vector $b$.
Now, differentiating with respect to $\d \GA$ we obtain
\eq{
	\frac{\d\Phi}{\dgs}(s)= A\tgam
}
which is equivalent to  
\begin{equation}\label{similarity-eqz}
	\frac{\d \Phi}{\dgs}  \cdot\, \tgam \; = \; 
	 \alpha 
	\qandq
	\frac{\d \Phi}{\dgs}  \cdot\, \ngam \; = \; 
	 \beta
\end{equation}
for every $s\in \Circ$. Remark that similarity  is the only affine motion verifying~\eqref{similarity-eqz} and this is due to the form of the matrix $A$. In fact, if $\frac{\d \Phi}{\dgs} $ verifies $\frac{\d \Phi}{\dgs}  \cdot\, \tgam = \alpha$ and $\frac{\d \Phi}{\dgs}  \cdot\, \ngam= \beta$ then
$$\frac{\d \Phi}{\dgs} = \alpha \tgam + \beta \ngam = \alpha \tgam + \beta \tgam^\bot = A\tgam\,.$$ 
In particular, if $\alpha = 0$ then $\Phi$ is a rigid motion and~\eqref{similarity-eqz} coincides with the characterization proved in Proposition~\ref{charactrigid}.

Then, differentiating again with respect to $d \GA(s)$, we have
\eq{
	\frac{\d}{\dgs}\left(\frac{\d \Phi}{\dgs}  \cdot\, \tgam\right) \; = \; 0
	\qandq
	\frac{\d}{\dgs}\left(\frac{\d \Phi}{\dgs}  \cdot\, \ngam\right) \; = \; 0
}
which is equivalent to~\eqref{similarity0}.
\end{proof}

\subsection{Piecewise Similarity Deformations}

Similarly to the Finsler penalty introduced in~\ref{BV2M}, we define a penalty that favors piecewise similarity transformations  by minimizing  the $L^1$-norm of the first derivative of $K_\GA$. To control the piecewise rigid transformation part, we relax the equality constraint $L_\GA(\Phi)=0$ defined as $\Cc_\Ga$ in~\eqref{eq-constr-C-Ga} to  a constraint $\Cc_\Ga^\la$ on the $L^1$ norm of $L_\GA(\Phi)$.

\begin{defn}[\BoxTitle{Piecewise-similarity penalty}]\label{defn2} For $\la \geq 0$ and $\Ga \in \Bb$, we define for all $\Phi \in T_\Ga\Bb$
\begin{equation}\label{eq-piecewise-similarity-penalty}
	R_\Ga^\la(\Phi) =
	TV_\GA(K_\GA(\Phi) )
	+ \iota_{\Cc_\Ga^\la}
	\qwhereq
	\Cc_\Ga^\la = \enscond{ \Phi \in T_\GA \Bb }{
			\norm{L_\Ga(\Phi)}_{L^1(\Ga)}\leq \la }
\end{equation}
where $L_\Ga$ is either $L_\Ga^+$ or $L_\Ga^-$ as defined in~\eqref{eq-operator-L} and $TV_\GA$ is defined in~\eqref{TV}.
\end{defn}

The piecewise similarity Finsler gradient $\nabla_{R_\Ga^\la}E(\Ga)$ is defined by minimizing~\eqref{defgrad} with the penalty $R_\Ga^\la$ defined in~\eqref{eq-piecewise-similarity-penalty} with the constraint set $\Ll_\Ga$ defined in~\eqref{eq-dev-constr}.  The following proposition shows that, as $\la$ tends to 0, the set of piecewise similarity Finsler gradients tends to the set of piecewise-rigid Finsler gradients. 

\begin{prop}
	One has $R_\Ga^0 = R_\Ga$ where $R_\Ga$ is defined in~\eqref{eq-piecewise-rigid-penalty}.
\end{prop}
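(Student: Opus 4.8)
The plan is to prove that the two functions $R_\Ga^0$ and $R_\Ga$ agree pointwise on $T_\Ga\Bb$, treating the constraint (indicator) part and the total–variation part separately. Setting $\la=0$ in~\eqref{eq-piecewise-similarity-penalty} gives
\[
	\Cc_\Ga^0 = \enscond{ \Phi \in T_\GA\Bb }{ \norm{L_\Ga(\Phi)}_{L^1(\Ga)} = 0 } = \enscond{ \Phi \in T_\GA\Bb }{ L_\Ga(\Phi) = 0 \ \ \d\GA\text{-a.e.}},
\]
and the first observation is that this set is the same whether one reads $L_\Ga$ as $L_\Ga^+$ or $L_\Ga^-$: for $\Phi\in T_\GA\Bb = BV^2(\GA)$ one has $\frac{\d\Phi}{\d\GA}\in BV(\GA)\cap L^\infty$, and $\tgam\in BV(\Circ,\RR^2)\cap L^\infty$, so the product $\psi := \frac{\d\Phi}{\d\GA}\cdot\tgam$ lies in $BV(\GA)$; its one–sided limits, which are precisely $L_\Ga^\pm(\Phi)$, therefore coincide off the at most countable (hence, by~\eqref{cond-derivative}, $\d\GA$–negligible) jump set of $\psi$.

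Next I would identify $\Cc_\Ga^0$ with $\Cc_\Ga$. If $\psi = 0$ $\d\GA$–a.e., then (again using~\eqref{cond-derivative} to pass between $\d\GA$ and Lebesgue negligibility) $\psi$ vanishes as a distribution, so $D_\GA\psi = 0$ and the good representative of $\psi$ is constant, hence identically $0$; consequently $L_\Ga^+(\Phi)(s)=L_\Ga^-(\Phi)(s)=0$ for \emph{every} $s\in\Circ$, i.e. $\Phi\in\Cc_\Ga$. The reverse inclusion $\Cc_\Ga\subset\Cc_\Ga^0$ is immediate from the definitions. Hence $\Cc_\Ga^0=\Cc_\Ga$ and $\iota_{\Cc_\Ga^0}=\iota_{\Cc_\Ga}$.

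It then remains to compare the two total–variation terms on this common set. For $\Phi\in\Cc_\Ga$ we have $\frac{\d\Phi}{\d\GA}\cdot\tgam=0$, so the first component of $K_\GA(\Phi)$ vanishes and $K_\GA(\Phi)=\bigl(0,\ \frac{\d\Phi}{\d\GA}\cdot\ngam\bigr)$. Its derivative is the $\RR^2$–valued measure $\bigl(0,\ D_\GA(\tfrac{\d\Phi}{\d\GA}\cdot\ngam)\bigr)$, whose total–variation measure equals $\bigl|D_\GA(\tfrac{\d\Phi}{\d\GA}\cdot\ngam)\bigr|$ since the Euclidean norm of $(0,x)$ is $|x|$; therefore $TV_\GA(K_\GA(\Phi))=TV_\GA\bigl(\frac{\d\Phi}{\d\GA}\cdot\ngam\bigr)$, which is exactly the first term of~\eqref{eq-piecewise-rigid-penalty}. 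Putting the pieces together: on $\Cc_\Ga=\Cc_\Ga^0$ both penalties equal $TV_\GA\bigl(\frac{\d\Phi}{\d\GA}\cdot\ngam\bigr)$, and off this set both equal $+\infty$, so $R_\Ga^0=R_\Ga$.

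The only genuinely delicate step is the middle one, namely upgrading the $L^1$–constraint ``$L_\Ga(\Phi)=0$ $\d\GA$–a.e.'' to the pointwise constraint ``$L_\Ga^+(\Phi)=L_\Ga^-(\Phi)\equiv 0$'' that defines $\Cc_\Ga$; this is where the $BV$ regularity of $\Phi$ and the immersion hypothesis~\eqref{cond-derivative} are used. Everything else is a routine unwinding of the definitions together with the elementary fact about the total variation of a vector measure one of whose components vanishes.
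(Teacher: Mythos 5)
Your proof is correct and follows essentially the same route as the paper's (which simply asserts $\Cc_\Ga^0=\Cc_\Ga$ and then notes that $TV_\GA(K_\Ga(\Phi))$ reduces to $TV_\GA\bigl(\frac{\d\Phi}{\d\GA}\cdot\ngam\bigr)$ on that set). The only difference is that you carefully justify the upgrade from ``$L_\Ga(\Phi)=0$ $\d\GA$-a.e.'' to the everywhere condition defining $\Cc_\Ga$, a point the paper leaves implicit.
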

\begin{proof}
One has $\Cc^0_\Ga = \Cc_\Ga$.
If $R_\Ga^0(\Phi) \neq +\infty$, one has $L_\GA^{+}(\Phi)=L_\GA^{-}(\Phi)=0$ a.e., so that in this case 
\eq{
	TV_\GA( K_\Ga(\Phi) )= 
	TV_\GA\left(\frac{\d \Phi}{\d \GA(s)}\cdot \ngam \right)\,.}
\end{proof}
The following theorem extends Theorem~\ref{existence1} to the piecewise similarity penalty and ensures existence of the corresponding Finsler gradient.

\begin{thm} 
The function $R_\GA^\la$ defined in~\eqref{eq-piecewise-similarity-penalty} admits at least a minimum on $\Ll_\GA$. 
\end{thm}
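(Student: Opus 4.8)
The plan is to adapt the direct-method argument of Theorem~\ref{existence1}, working with $\Bb$ equipped with the $BV^2$-weak* topology. Let $\{\Phi_h\}\subset\Ll_\GA$ be a minimizing sequence for $R_\GA^\la$; discarding the elements with infinite value we may assume $\Phi_h\in\Cc_\GA^\la$ and $\sup_h TV_\GA(K_\GA(\Phi_h))<+\infty$. As in the rigid case, the whole difficulty is to produce a uniform bound $\sup_h\|\Phi_h\|_{BV^2(\GA)}<+\infty$; once this is available, Theorem 3.23 of~\cite{AFP} gives a subsequence converging weakly* in $BV^2(\GA)$ to some $\Phi$, hence strongly in $W^{1,1}(\GA)$, so that $L_\GA(\Phi_h)\to L_\GA(\Phi)$ in $L^1(\GA)$ and $\|L_\GA(\Phi)\|_{L^1(\GA)}\le\la$, i.e. $\Phi\in\Cc_\GA^\la$; lower semicontinuity of the $L^2(\GA)$-norm gives $\Phi\in\Ll_\GA$; and lower semicontinuity of $TV_\GA(K_\GA(\cdot))$ with respect to the $BV^2$-weak* topology then shows $\Phi$ is a minimizer.

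For the uniform bound I would prove analogues of Lemma~\ref{lem-compacity} and Lemma~\ref{lem-compacity-circle}, again separating the case where $\GA$ is a circle. Assume first that $\GA$ is not a circle. The new ingredient is that the relaxed constraint $\Cc_\GA^\la$ already controls the tangential part of the velocity: writing $K_\GA(\Phi)=\bigl(L_\GA(\Phi),\,\tfrac{\d\Phi}{\d\GA}\cdot\ngam\bigr)$, the bound $\|L_\GA(\Phi)\|_{L^1(\GA)}\le\la$ together with $TV_\GA(L_\GA(\Phi))\le TV_\GA(K_\GA(\Phi))$ gives, via the good-representative property of one-dimensional $BV$ functions, a bound on $\|L_\GA(\Phi_h)\|_{L^\infty(\GA)}$ depending only on $\la$, on $l(\GA)$ and on $\sup_h TV_\GA(K_\GA(\Phi_h))$. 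Fixing $s_0\in\Circ$ and setting $c=(\tfrac{\d\Phi}{\d\GA}\cdot\ngam)(s_0)$, the reconstruction
\[
	\Phi(s)=\Phi(s_0)+\int_{s_0}^{s}\Bigl(L_\GA(\Phi)\,\tgam+\bigl(\tfrac{\d\Phi}{\d\GA}\cdot\ngam\bigr)\ngam\Bigr)\,\d\GA
	=\Phi(s_0)+c\,[\GA(s)-\GA(s_0)]^\bot+\text{(remainder)}
\]
has a remainder bounded in $L^\infty(\GA)$ by $l(\GA)\bigl(\|L_\GA(\Phi)\|_{L^\infty(\GA)}+TV_\GA(K_\GA(\Phi))\bigr)$, since the tangential integral is bounded and the oscillation of $\tfrac{\d\Phi}{\d\GA}\cdot\ngam$ around $c$ is bounded by $TV_\GA(K_\GA(\Phi))$. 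Projecting onto $\ngam$ and using $\Phi_h\in\Ll_\GA$ exactly as in the proof of Lemma~\ref{lem-compacity}, one is led to the \emph{same} $2\times 2$ bilinear form $A(e,s_0)$ in the variables $(|\Phi(s_0)|,c)$, which is non-degenerate precisely when $\GA$ is neither a circle nor a line; since closed injective curves are not lines and $\GA$ is not a circle, this yields a uniform $L^\infty(\GA)$-bound on $\Phi_h$ and on $\tfrac{\d\Phi_h}{\d\GA}\cdot\ngam$, hence a $W^{1,1}(\GA)$-bound. For the second variation one uses $TV^2_\GA(\Phi)=TV_\GA(\tfrac{\d\Phi}{\d\GA})$ and the $BV$ product rule applied to $\tfrac{\d\Phi}{\d\GA}=L_\GA(\Phi)\,\tgam+(\tfrac{\d\Phi}{\d\GA}\cdot\ngam)\,\ngam$, together with $TV_\GA(\tgam)=TV_\GA(\ngam)=|\mathrm{curv}_\GA|(\Circ)\le|D^2\GA|(\Circ)$ (cf.~\eqref{deriv-norm},~\eqref{curvature}), to bound $TV^2_\GA(\Phi_h)$ by $|D^2\GA|(\Circ)$ times the already controlled quantities, giving the uniform $BV^2(\GA)$-bound.

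When $\GA$ is a circle of radius $r$ the bilinear form degenerates, and one argues as in Lemma~\ref{lem-compacity-circle}: the penalty $R_\GA^\la$ and the constraints $\Ll_\GA$, $\Cc_\GA^\la$ are all invariant under adding a tangential translation $(\Phi(s_0)\cdot\tgam(s_0))\tgam$ (which is just a rotation of the circle), so one replaces $\{\Phi_h\}$ by the normalized minimizing sequence $\{\Psi_h\}$ of~\eqref{psi-proof}, for which $\Psi_h(s_0)\cdot\tgam(s_0)=0$, and then the Poincar\'e and product-rule estimates of Lemma~\ref{lem-compacity-circle} apply verbatim (with the extra, bounded, tangential contribution coming from $L_\GA(\Phi)$) to give the uniform $BV^2(\GA)$-bound. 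A genuine scaling direction $\Phi(s)=\alpha\GA(s)$ is \emph{not} a source of non-compactness here, because for a circle it has nonzero normal projection and $L_\GA(\Phi)=\alpha\neq 0$, hence is constrained by $\Ll_\GA$ and $\Cc_\GA^\la$. Once the uniform bound holds in both cases, the direct method concludes as in the first paragraph.

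The step I expect to be the main obstacle is this compactness estimate, and within it the verification that introducing the scaling degree of freedom creates no new degenerate curves: the point to get right is that the $L^1$-slack $\la$ on $L_\GA(\Phi)$ is exactly what keeps the tangential component of $\tfrac{\d\Phi}{\d\GA}$ under control, so that the degeneracy analysis reduces to the $2\times 2$ form of Lemma~\ref{lem-compacity} rather than to a genuinely new $3\times 3$ form; as in the piecewise-rigid case the circle must be singled out, where the only non-compact direction remains the tangential translation.
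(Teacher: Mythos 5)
Your proposal is correct and follows exactly the route the paper intends: the paper's own proof of this theorem is a one-line instruction to adapt the proof of Theorem~\ref{existence1} using the new $L^1$-constraint on $L_\GA(\Phi)$, and you have carried out that adaptation faithfully, including the key observation that the $L^1$-bound on $L_\GA(\Phi)$ (together with $TV_\GA(L_\GA(\Phi))\le TV_\GA(K_\GA(\Phi))$) controls the tangential part of $\tfrac{\d\Phi}{\d\GA}$, so that the compactness estimates of Lemmas~\ref{lem-compacity} and~\ref{lem-compacity-circle} go through with the same $2\times 2$ bilinear form and the same separate treatment of the circle.
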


\begin{proof} 
	It suffices to adapt the proof of Theorem~\ref{existence1} by  using the new constraint on the $L^1$-norm of $L_\Ga(\Phi)$.

\end{proof}

\subsection{Numerical examples}\label{NE}

We now show some numerical examples for the piecewise similarity Finsler gradient. The computation is performed with the discretization detailed in Section~\eqref{discretization}, which is extended in a straightforward manner to handle the piecewise similarity model.

\begin{figure}[!h]
\centering
\begin{tabular}{@{}c@{\hspace{5mm}}c@{}}
\includegraphics[width=.3\linewidth]{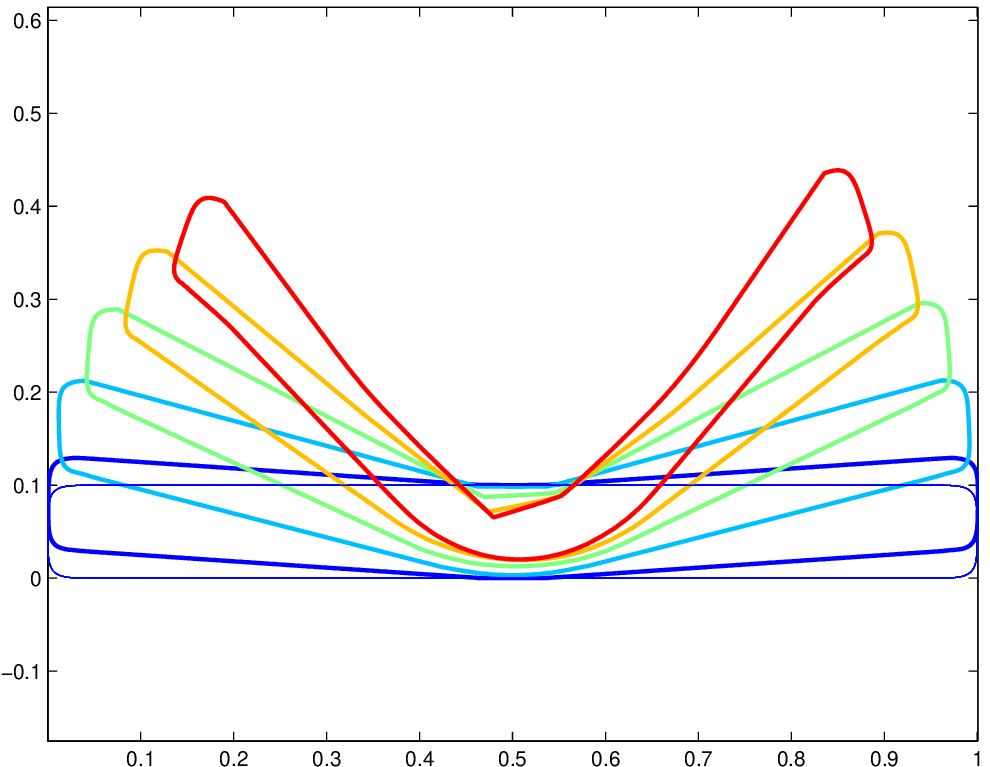}&
\includegraphics[width=.3\linewidth]{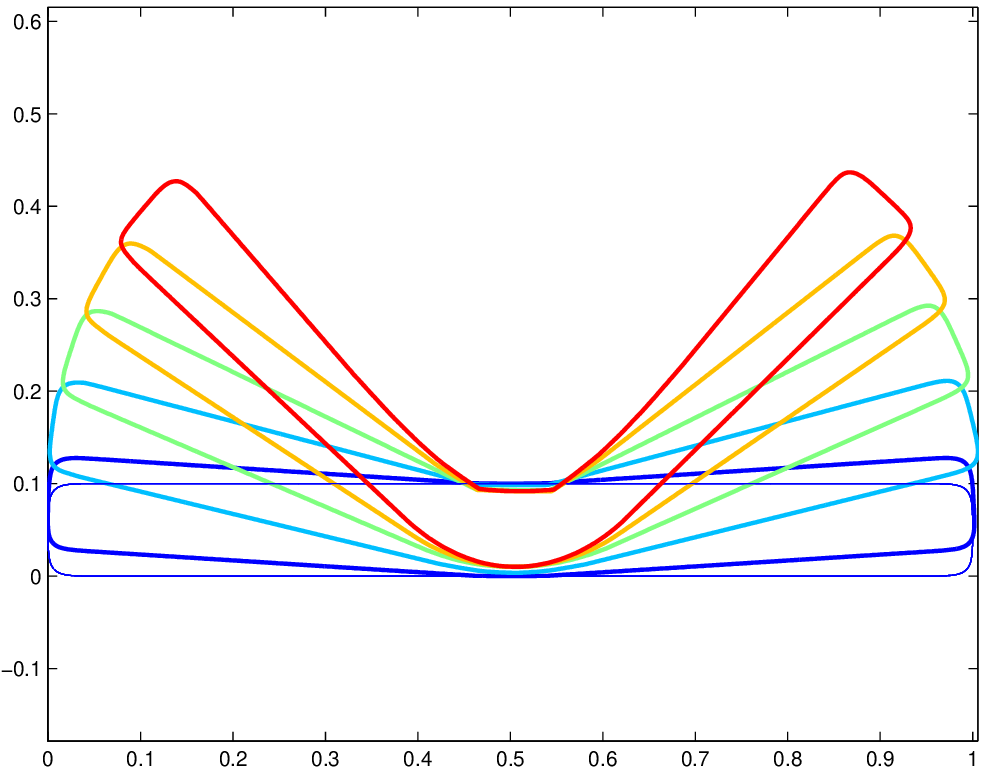}\\
$\la=.01$ & $\la=50$ \\
\includegraphics[width=.3\linewidth]{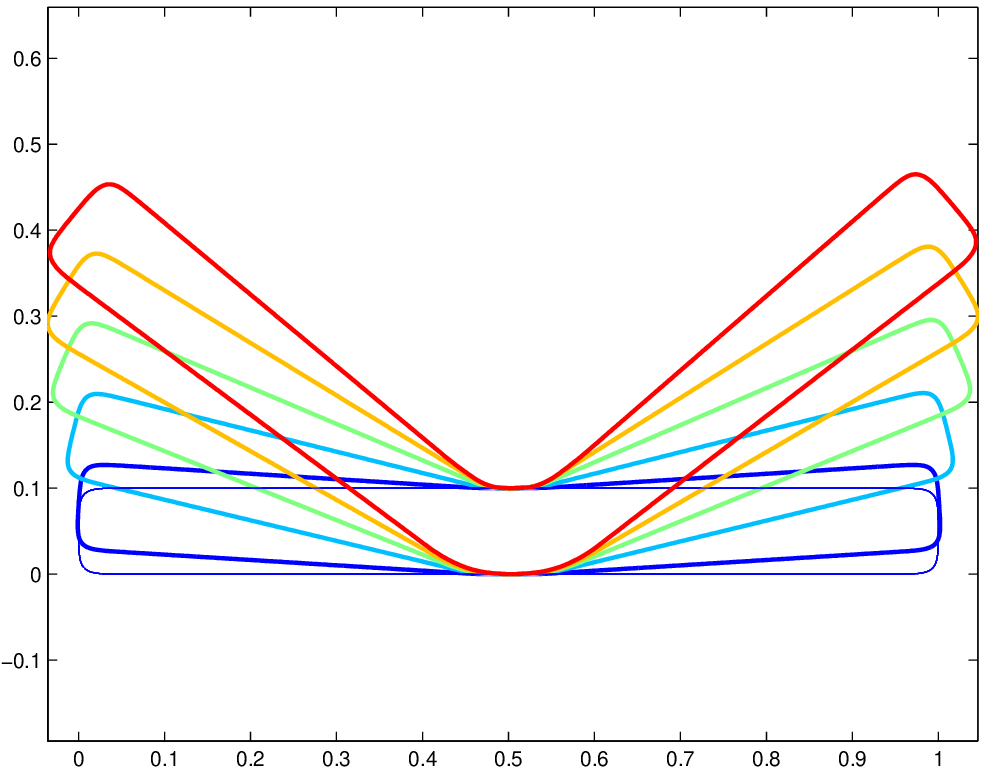}&
\includegraphics[width=.3\linewidth]{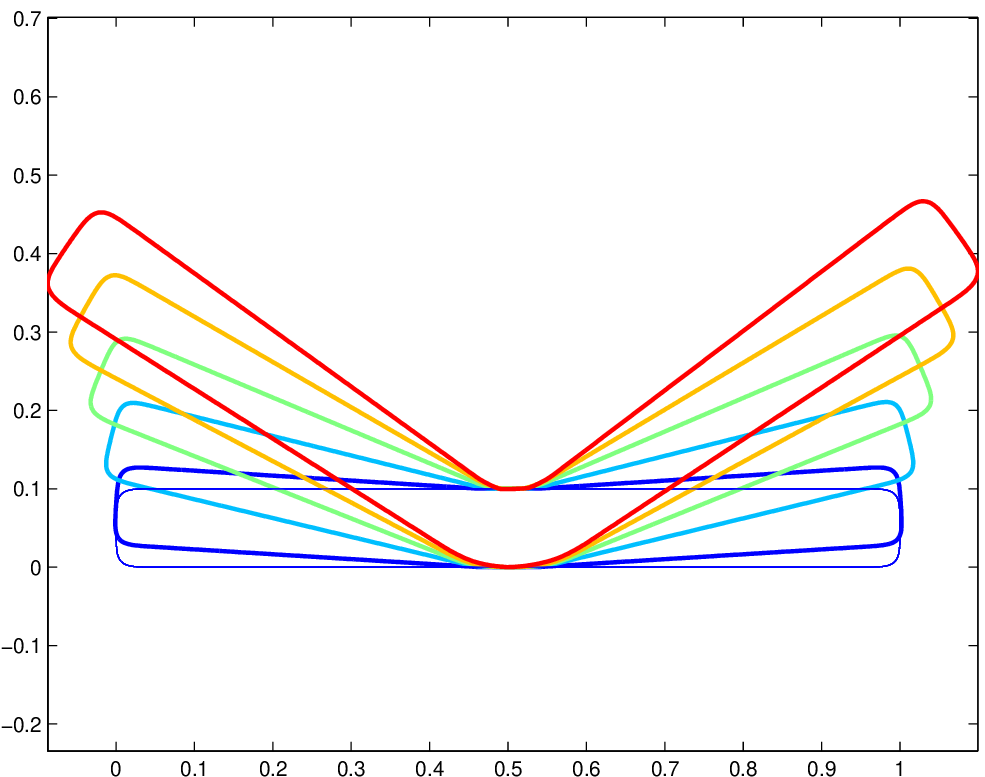}\\
 $\la=300$ & $\la=2000$
\end{tabular}
\caption{ Piecewise similarity Finsler flow evolutions for $\rho = 0.3$ and for different values of $\la$. }\label{evolutions2}
\end{figure}

\paragraph{Influence of  $\la$. }

We first re-use the synthetic example introduced in Section~\ref{sec-influ-rho} to illustrate the influence of the parameter $\la$. We thus use an evolution driven by the flow $F(\Ga) \in T_\Ga \Bb$ defined in~\eqref{field-const}. Figure~\ref{evolutions2} shows how $\la$ allows one to interpolate between the piecewise rigid model (when $\la=0$) to a piecewise similarity model when $\la$ increases.  For large value of $\la$, one clearly sees the global scaling introduced by the model which is helpful to better follow the flow of $F$.

Figure~\ref{evolutions3} compares the evolution obtained with the initial flow~\eqref{eq-synthetic-flow} (corresponding to $(\rho,\la)=(0,0)$, i.e. the Finsler gradient is equal to $F(\Ga)$), the piecewise rigid flow (corresponding to $\rho>0$ and $\la=0$) and the piecewise similarity flow (corresponding to $\rho>0$ and $\la>0$).
 
\begin{figure}[!h]
\centering
\begin{tabular}{@{}c@{\hspace{3mm}}c@{\hspace{3mm}}c@{}}
\includegraphics[width=.25\linewidth]{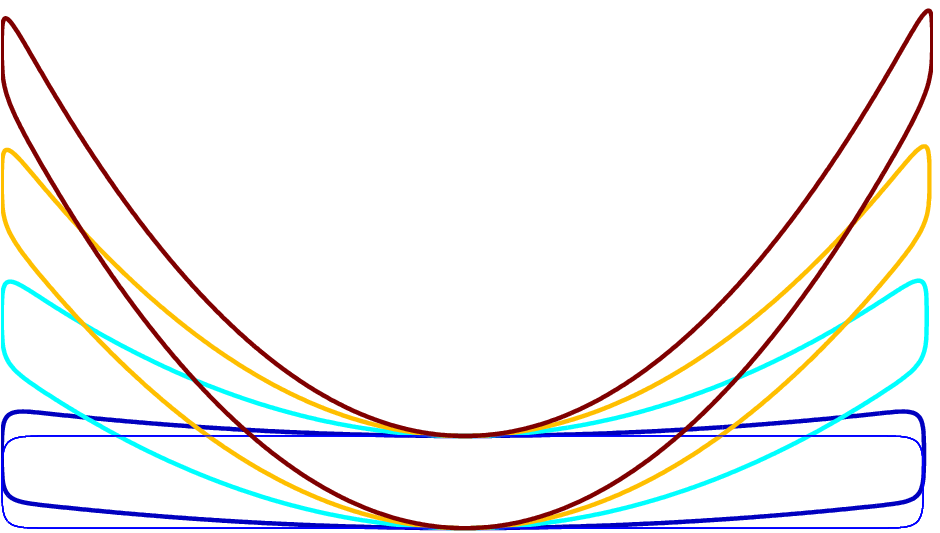}&
\includegraphics[width=.25\linewidth]{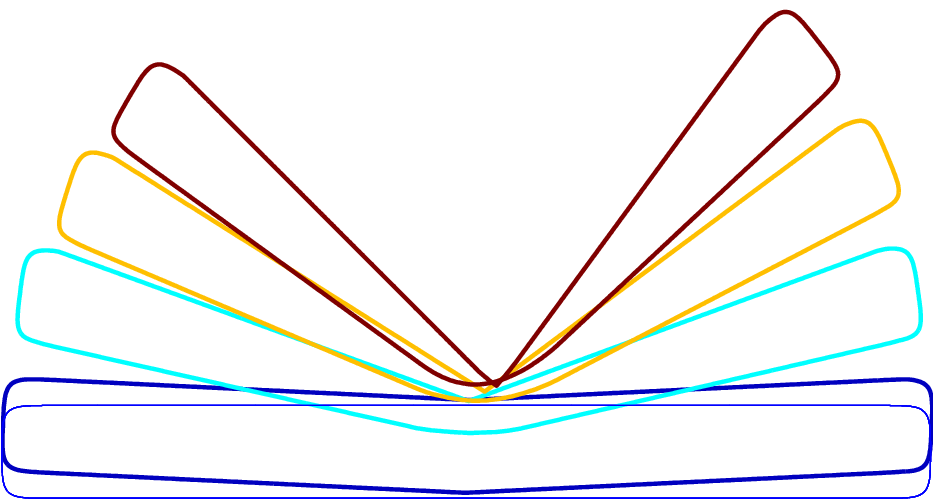}&
\includegraphics[width=.33\linewidth]{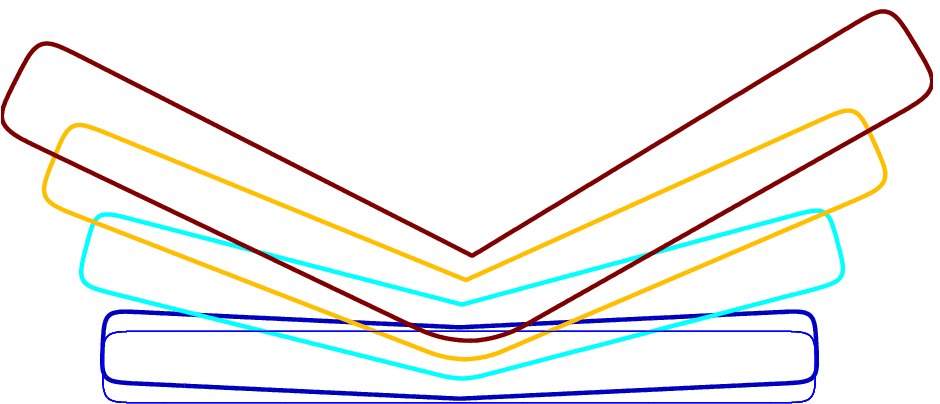} \\
$L^2$ & $(\rho,\la)=(0.5,0)$ & $(\rho,\la)=(0.5,200)$ 
\end{tabular}
\caption{From left to right: evolutions by using the $L^2$ gradient, piecewise rigid Finsler gradient, piecewise similarity Finsler gradient. }\label{evolutions3}
\end{figure}

\paragraph{Application to the matching problem.}
 
We now show an application of the Finsler descent method to the curve matching problem, by minimizing the energy $E$ defined in~\eqref{energy}. Figure~\ref{evolutions} shows the results obtained with the piecewise similarity penalty for well chosen parameters $(\si,\de,\la,\rho)$. These evolutions should be compared with the ones reported in Section~\ref{sec-numerics-registration}. Allowing the length of the curve to vary using a parameter $\la>0$ allows the evolutions to better capture the geometry of the target shape and thus leads to better matchings.

\begin{figure}[h]
\centering
\begin{tabular}{@{}c@{\hspace{1mm}}c@{\hspace{1mm}}c@{\hspace{1mm}}c@{\hspace{1mm}}c@{}}
\includegraphics[width=.19\linewidth]{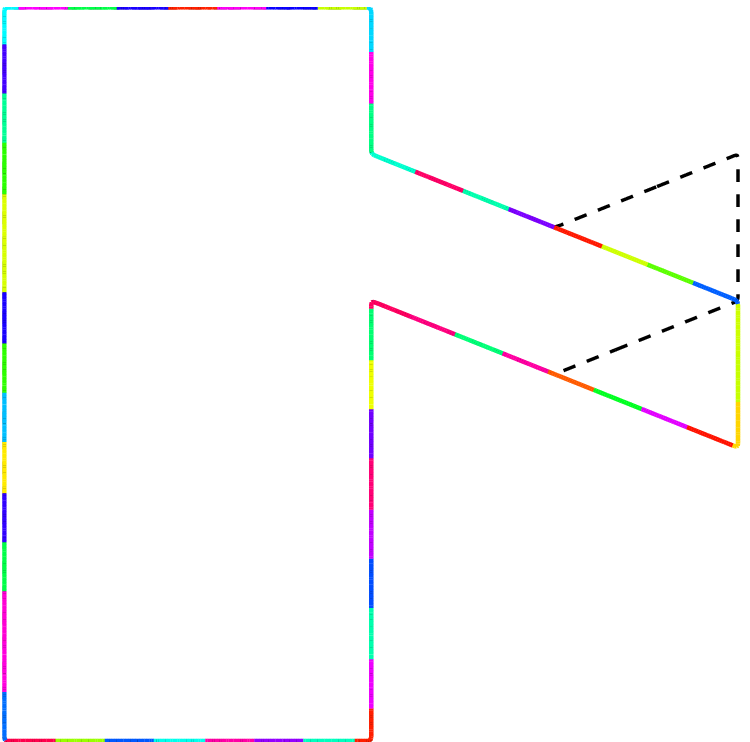}&
\includegraphics[width=.19\linewidth]{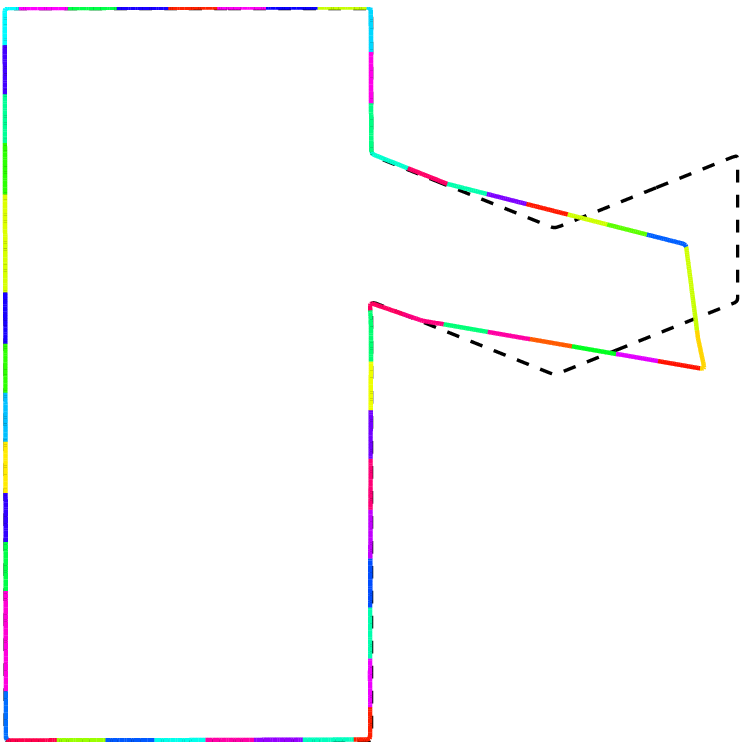}&
\includegraphics[width=.19\linewidth]{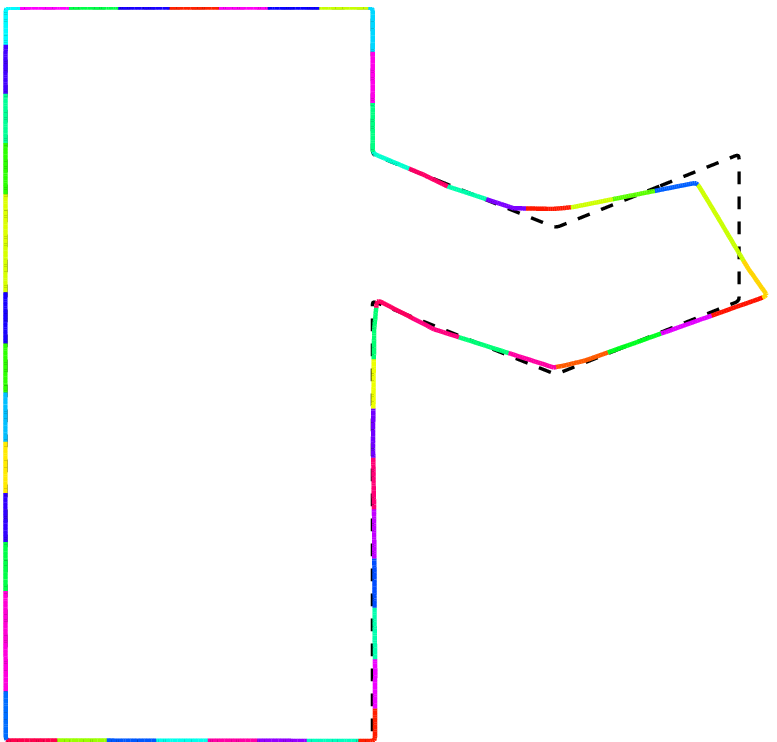}&
\includegraphics[width=.19\linewidth]{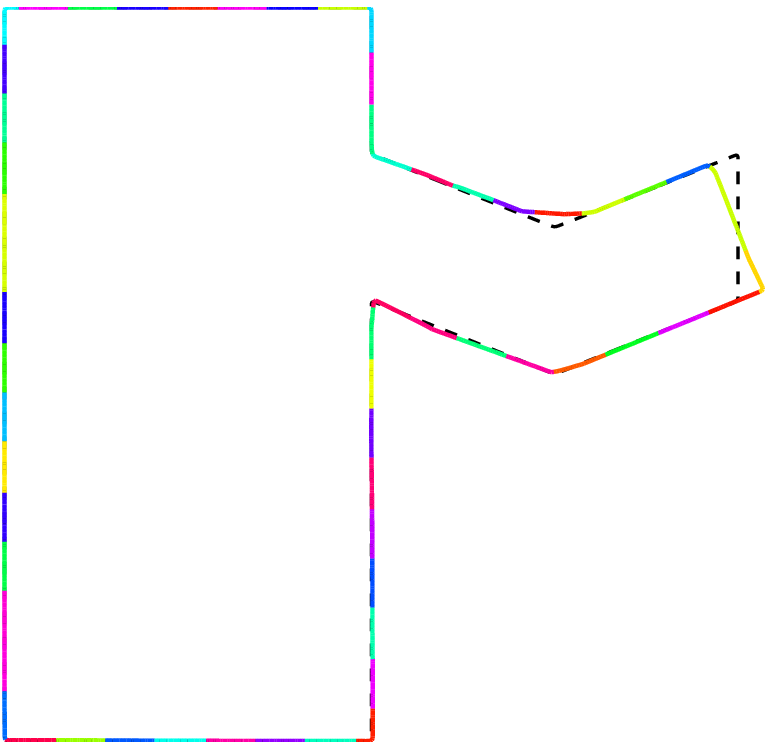}&
\includegraphics[width=.19\linewidth]{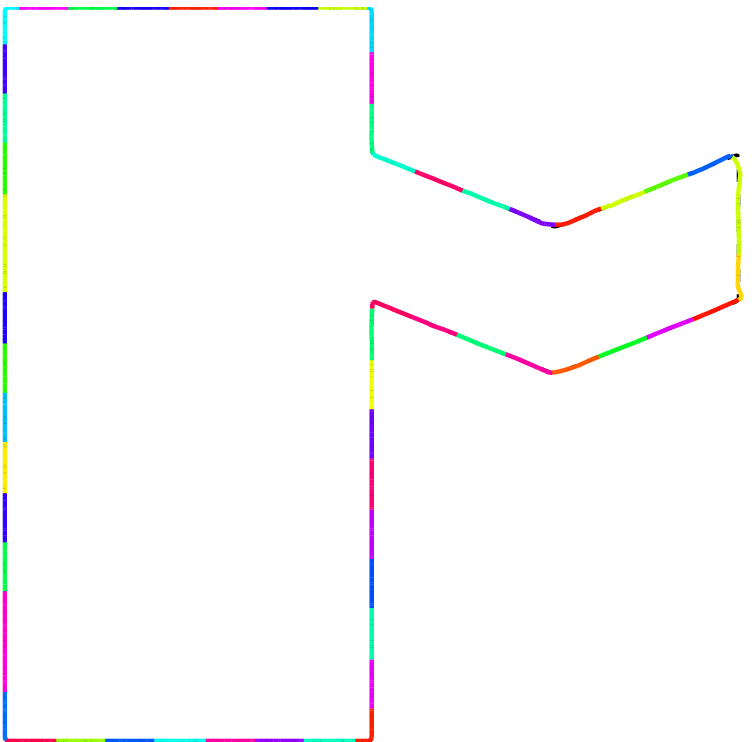}\\
\includegraphics[height=.19\linewidth,angle=270]{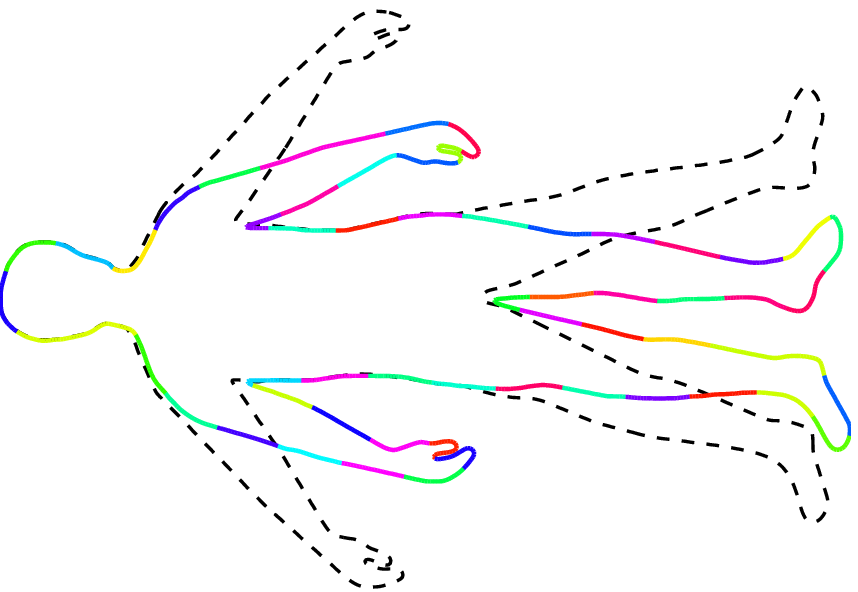}&
\includegraphics[height=.19\linewidth,angle=270]{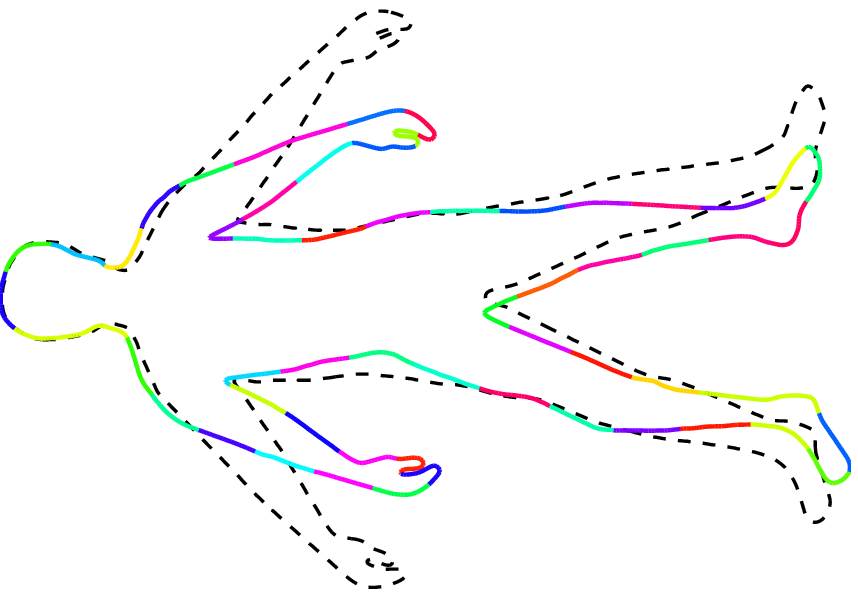}&
\includegraphics[height=.19\linewidth,angle=270]{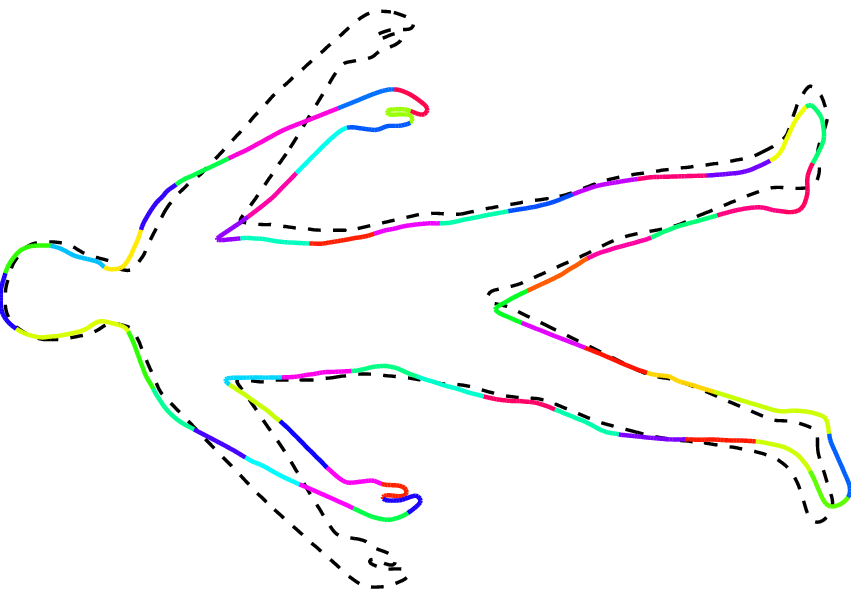}&
\includegraphics[height=.19\linewidth,angle=270]{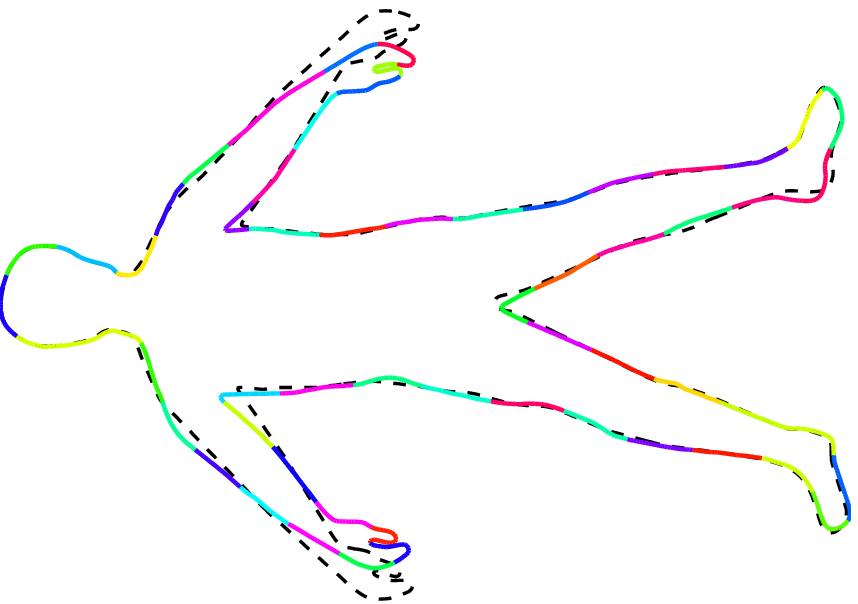}&
\includegraphics[height=.19\linewidth,angle=270]{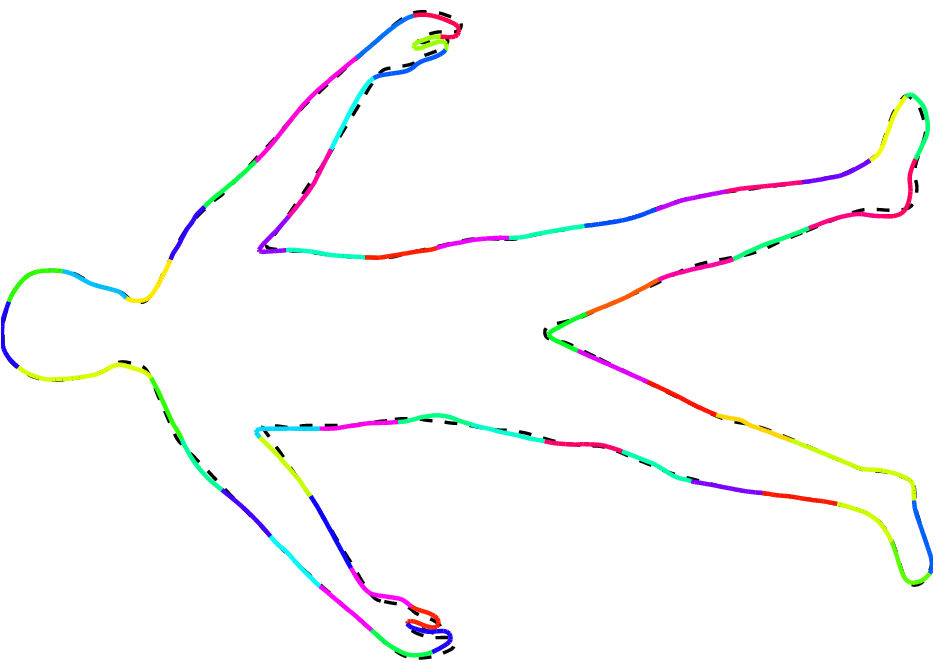}\\
\includegraphics[height=.19\linewidth,angle=270]{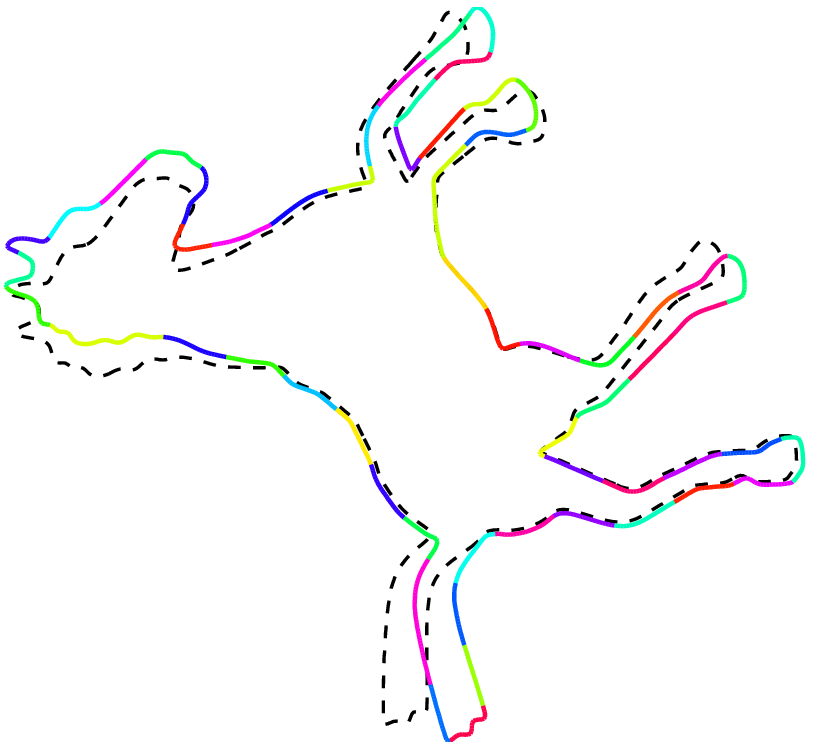}&
\includegraphics[height=.19\linewidth,angle=270]{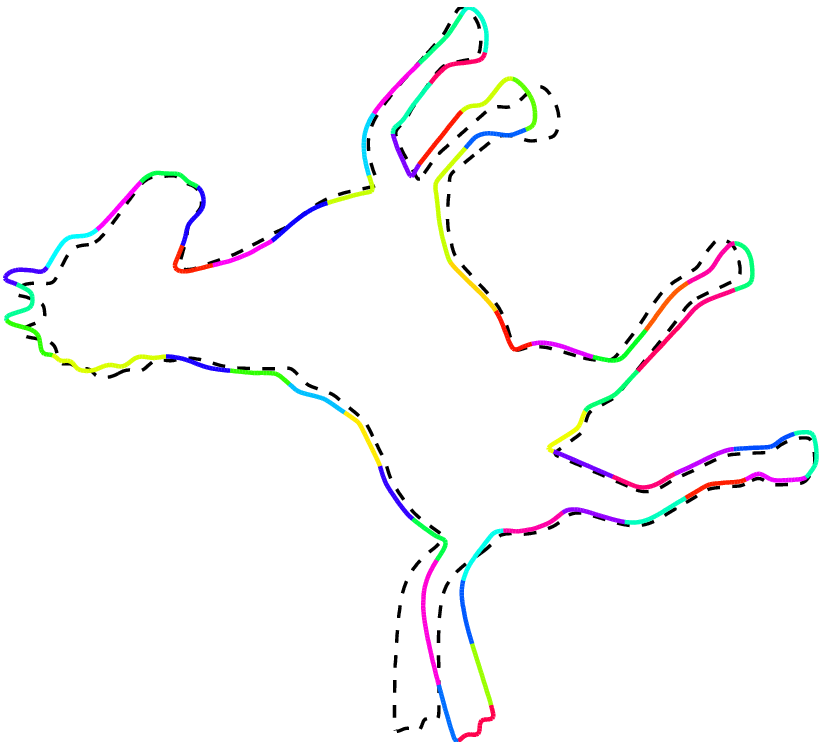}&
\includegraphics[height=.19\linewidth,angle=270]{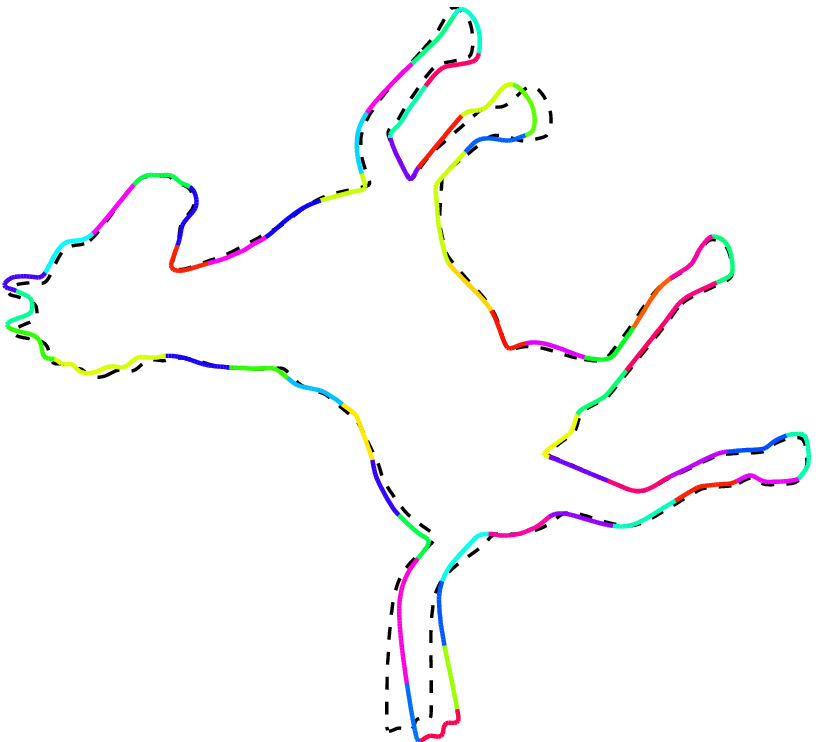}&
\includegraphics[height=.19\linewidth,angle=270]{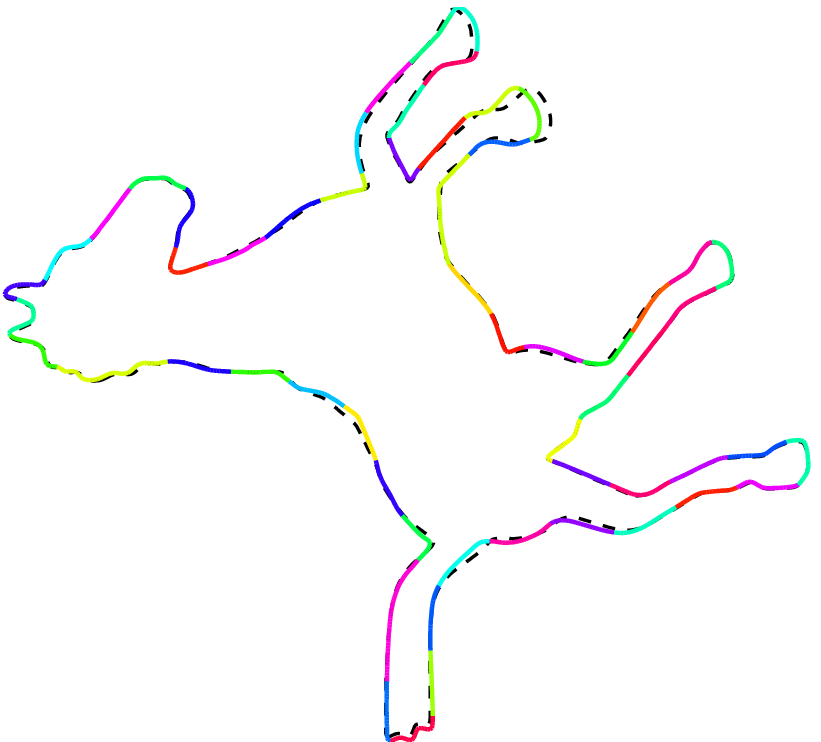}&
\includegraphics[height=.19\linewidth,angle=270]{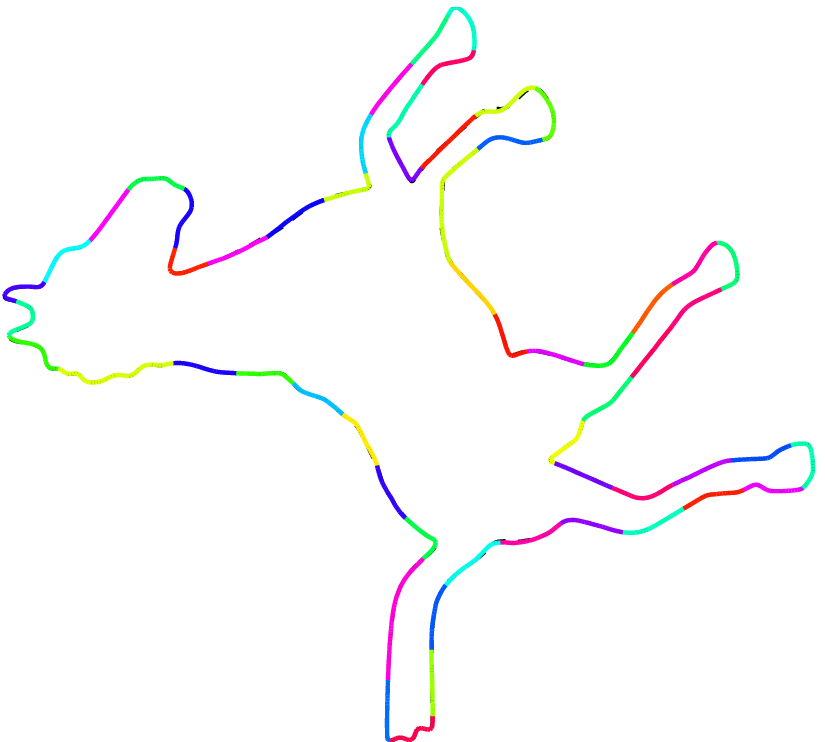}
\end{tabular}
\caption{\label{evolutions} Curve matching by piecewise similarity motions. Each image displays the target curve $\La$ (dash line) and the current curve $\Ga_k$ (solid line). We used the following parameters: 
	top row: $\sigma=0.8$, $\delta=0.04$, $\la=2000$, $\rho=0.85$ ; 
	middle row: $\sigma=0.8$, $\delta=0.08$, $\la=2000$, $\rho=0.95$ ; 
	bottom row: $\sigma=0.9$, $\delta=0.03$, $\la=2000$, $\rho=0.87$.\vspace{0.5cm} 
}
\end{figure}

\section*{Conclusion}

This paper has presented a novel way to encode piecewise regular constraints for curve evolutions. This is achieved by designing Finsler penalties in the tangent space of curves. This method offers a unifying treatment of this class of evolutions. A distinctive feature of this approach is that it uses a convex modeling of the geometric constraints. For the particular case of piecewise rigid and piecewise similarity transforms, this avoids the need to specify	the location of the articular points, which is a difficult problem. Instead, these articulations are obtained, at each iteration, through the resolution of a convex program. This novel method opens the doors to many fascinating theoretical questions, such as the definition of a continous flow when letting $\tau_k \rightarrow 0$, and the properties of Finslerian spaces of curves. 

\section*{Acknowledgment}

This work has been supported by the European Research Council (ERC project SIGMA-Vision).

\section{Appendix: $BV$ and $BV^2$ functions}\label{appendix}

In this section we remind the definition of $BV$ and $BV^2$ functions in dimension one.

\begin{defn}
 Let $u\in L^1([0,1],\RR)$. We say that u is a function of bounded variation in $[0,1]$ if
\begin{equation}\label{bv1}
|D u|([0,1])=\sup \enscond{
 	\int_0^1u\,g'\, \d x 
	}{
		g\in \mathrm{C}^\infty_c([0,1],\RR),\|g\|_{L^\infty([0,1],\RR)}\leq 1
	}< \infty\,.
\end{equation}
By  Riesz's representation theorem this is equivalent to state that there exists a unique finite Radon measure, denoted by $D u$, such that 
$$\int_0^1u\,g'\, \d x = - \int_0^1g\, \d D u \quad \foralls g\in \mathrm{C}^1_c({[0,1]})\,.$$
Clearly the total variation of the measure $D u$ on $[0,1]$, i.e., $|D u|([0,1])$, coincides with the quantity defined in~\eqref{bv1} and this justifies our notations. 
We denote the space of functions of bounded variation in $[0,1]$ by $\BV([0,1],\RR)$. 
The space $\BV([0,1],\RR)$ equipped with the norm 
$$\|u\|_{\BV} = \|u\|_{L^1} + |D u |([0,1])$$
is a Banach space. We say that $\{u_h\}$  weakly* converges in $\BV([0,1],\RR)$ to $u$ if
$$u_h \overset{L^1}{\longrightarrow} u \quad\mbox{and}\quad D u_h \overset{*}{\rightharpoonup}D u\,, \quad\mbox{as}\quad h \rightarrow \infty\,.$$
\end{defn}

We now define the set of $BV^2$-functions as the functions whose second derivative are Radon measures:
\begin{defn}
 Let $u\in W^{1,1}([0,1],\RR)$. 
 We say that $u$ belongs to $BV^2([0,1],\RR)$ if 
 \begin{equation}\label{2var}
 |D^2 u|([0,1]):=\sup \enscond{
 		\int_0^1u\,g''\, \d x 
	}{
		g\in \mathrm{C}^\infty_c({[0,1]},\mathbb{R} ),\|g\|_{L^{\infty}([0,1],\mathbb{R} )}\leq 1
	}
	< \infty\,.
\end{equation}
As for the first variation, the functional considered in \eqref{2var} can be represented by a measure $D^2u$ whose total variation coincides with  the quantity $|D^2 u|([0,1])$ previously defined.
\par $\BV^2([0,1],\RR)$ equipped with the norm 
\begin{equation}\label{normbv2}
\|u\|_{\BV^2} = \|u\|_{BV} + |D^2u |([0,1])
\end{equation}
is a Banach space. In particular we have $W^{2,1}([0,1],\RR)\subset BV^2([0,1],\RR)$.
 We say that $\{u_h\}$  weakly* converges in $\BV^2([0,1],\RR)$ to $u$ if
$$u_h \overset{W^{1,1}}{\longrightarrow} u \quad\mbox{and}\quad D^2 u_h \overset{*}{\rightharpoonup}D^2 u\,, \quad\mbox{as}\quad h \rightarrow \infty\,.$$
\end{defn}
We remind that if $\{u_h\}\subset BV^2([0,1],\RR)$ is such that  $\underset{h}{\sup}\,\|u_h\|_{BV^2}< M$ then there exists $u\in  BV^2([0,1],\RR)$ and a subsequence (not relabeled)  $\{u_h\}$  that weakly* converges in $\BV^2([0,1],\RR)$ toward $u$ and
$$|D^2u |([0,1]) \leq \, \underset{h \rightarrow \infty}{\liminf}\, |D^2 u_h |([0,1])\,.$$

Moreover we have the following proposition showing the link between $BV$ and $BV^2$ functions\begin{prop} \label{bv-bv2} A function $u$ belongs to $ BV^2([0,1],\RR)$ if and only if $u\in W^{1,1}([0,1],\RR)$ and $u' \in BV([0,1],\RR)$, for every $i=1,...,n$. Moreover
$$|D^2u|([0,1]) = \left|D u'\right|([0,1])\,.$$\end{prop}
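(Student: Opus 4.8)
The statement is an equivalence together with an equality of total variations, so I would split the argument into the two implications and then verify the quantitative identity. The key technical tool is the integration-by-parts identity for $W^{1,1}$-functions in one variable: for $u \in W^{1,1}([0,1],\RR)$ and $g \in C^\infty_c([0,1],\RR)$ one has $\int_0^1 u\, g'' \, \d x = -\int_0^1 u'\, g' \, \d x$. This reduces the supremum defining $|D^2 u|([0,1])$ in \eqref{2var} to the supremum defining $|D u'|([0,1])$ in \eqref{bv1} applied to the function $u'$, and makes both the equivalence and the norm equality transparent.

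First I would prove the ``if'' direction. Assume $u \in W^{1,1}([0,1],\RR)$ and $u' \in BV([0,1],\RR)$. Then for any admissible test function $g$ with $\|g\|_{L^\infty}\le 1$, applying the integration-by-parts identity above gives $\int_0^1 u\, g'' \, \d x = -\int_0^1 u'\, g' \, \d x \le |Du'|([0,1])$, where the last inequality uses that $g$ (being $C^\infty_c$) is itself an admissible test for the definition \eqref{bv1} of $|Du'|([0,1])$. Taking the supremum over $g$ yields $|D^2 u|([0,1]) \le |Du'|([0,1]) < \infty$, hence $u \in BV^2([0,1],\RR)$. Conversely, assume $u \in BV^2([0,1],\RR)$, so $u \in W^{1,1}$ by definition and $|D^2u|([0,1]) < \infty$. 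For any $g \in C^\infty_c([0,1],\RR)$ with $\|g\|_{L^\infty}\le 1$, write $g = h'$ where $h(x) = \int_0^x g(t)\,\d t$; note $h \in C^\infty_c([0,1],\RR)$ since $g$ has compact support in the open interval (so $h$ is constant near both endpoints, and by subtracting that constant — which does not change $h' = g$ — we may take $h$ compactly supported), and $\|h\|_{L^\infty}\le 1$ after rescaling if necessary. Then $\int_0^1 u'\, g \, \d x = \int_0^1 u'\, h' \, \d x = -\int_0^1 u\, h'' \, \d x \le |D^2 u|([0,1])$, and taking the supremum over $g$ gives $|Du'|([0,1]) \le |D^2u|([0,1]) < \infty$, so $u' \in BV([0,1],\RR)$.

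Combining the two inequalities $|D^2u|([0,1]) \le |Du'|([0,1])$ and $|Du'|([0,1]) \le |D^2u|([0,1])$ established above yields the claimed equality $|D^2u|([0,1]) = |Du'|([0,1])$, and this completes the proof. The main obstacle — really the only point requiring care — is the normalization issue in the converse direction: when writing $g = h'$ with $h(x)=\int_0^x g$, one must check that $h$ can be taken compactly supported in the open interval with $\|h\|_{L^\infty}\le 1$. Compact support holds because $g$ vanishes near the endpoints, forcing $h$ to be constant there, and one removes the constant by noting $\int_0^1 g\,\d x$ need not vanish in general; this is handled by restricting to test functions $g$ with $\int_0^1 g = 0$, which still suffices to compute $|Du'|$ since $u'$ being an $L^1$-function means the relevant distributional pairing is unaffected by adding constants to test functions (equivalently, one works with $h$ directly as the test object and notes every mean-zero $C^\infty_c$ function arises this way). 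The $L^\infty$-bound on $h$ is then achieved by a harmless rescaling, which affects both sides of the inequality identically and hence disappears upon taking suprema.
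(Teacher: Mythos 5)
The paper states Proposition~\ref{bv-bv2} in the appendix as a recalled standard fact, with pointers to~\cite{AFP} and~\cite{Demengel}, and gives no proof of its own; so your proposal can only be judged on its own terms. Your overall strategy (reduce the supremum in~\eqref{2var} to the one in~\eqref{bv1} via the weak-derivative integration-by-parts identity $\int_0^1 u\,\phi'\,\d x=-\int_0^1 u'\,\phi\,\d x$, $\phi\in C^\infty_c$) is the right one, and your ``if'' direction is correct: taking $\phi=g'$ gives $\int u\,g''=-\int u'\,g'\le |Du'|([0,1])$ for every admissible $g$, hence $|D^2u|\le |Du'|$.

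The converse direction, however, contains a genuine error. The quantity defining $|Du'|([0,1])$ in~\eqref{bv1} is $\sup\{\int_0^1 u'\,g'\,\d x\}$ over $g\in C^\infty_c$ with $\|g\|_{L^\infty}\le 1$ --- the test function appears \emph{differentiated}. You instead bound $\int_0^1 u'\,g\,\d x$ over $\|g\|_{L^\infty}\le 1$; the supremum of that expression is $\|u'\|_{L^1}$, not $|Du'|$, so the concluding sentence ``taking the supremum over $g$ gives $|Du'|\le |D^2u|$'' does not follow from the chain you display. The entire antiderivative construction $h(x)=\int_0^x g$, and the ``main obstacle'' you identify, are artifacts of this misreading. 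They are also not correctly repaired: $h$ equals $0$ near the left endpoint and $\int_0^1 g$ near the right endpoint, two different constants in general, so no single constant subtraction makes $h$ compactly supported unless $\int_0^1 g=0$; and the claim that the pairing $\int u'\,g\,\d x$ is unaffected by adding a constant $c$ to $g$ is false (it changes by $c(u(1)-u(0))$). The correct converse is the exact mirror of your first direction and needs none of this: for any $h\in C^\infty_c([0,1],\RR)$ with $\|h\|_{L^\infty}\le 1$, apply the weak-derivative identity with test function $h'\in C^\infty_c$ to get $\int_0^1 u'\,h'\,\d x=-\int_0^1 u\,h''\,\d x\le |D^2u|([0,1])$, whence $|Du'|\le |D^2u|$; combined with the first direction this gives the equality. (The phrase ``for every $i=1,\dots,n$'' in the statement is a leftover from the vector-valued version and can be ignored, as you did.)
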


\par We also remind that  
 $BV^2([0,1],\RR)$ is embedded in $ W^{1,\infty}([0,1],\RR)$ so $BV^2$ functions are  Lipschitz continuous (see Theorem 5,~\cite{EG} pag. 131). Then,  as $[0,1]\subset \R$ is bounded, $BV^2([0,1],\RR)$ is embedded $W^{1,p}([0,1],\RR)$, for every $p\geq 1$. In particular this implies that  $BV^2([0,1],\RR)$ is dense in $W^{1,p}([0,1],\RR)$, for every $p\geq 1$.
\par A vector field ${\bf u}$ belongs to $BV([0,1],\RR^2)$ ($BV^2([0,1],\RR^2)$ respectively)  if every component of ${\bf u}$ belongs to $BV([0,1],\RR)$ ($BV^2([0,1],\RR)$ respectively). 
We refer to~\cite{AFP} and~\cite{Demengel} for more properties of these spaces.

\bibliographystyle{plain}  
\bibliography{bibliography}	

\begin{thebibliography}{10}

\bibitem{Opt-manifolds}
P.-A. Absil, R.~Mahony, and R.~Sepulchre.
\newblock Optimization on manifolds: {M}ethods and applications.
\newblock {\em {R}ecent {A}dvances in {O}ptimization and its {A}pplications in
  {E}ngineering. {T}he 14th {B}elgian-{F}rench-{G}erman {C}onference on
  {O}ptimization. {S}pringer}, pages 125--144, 2010.

\bibitem{prox-banach-1}
Y.~I. Alber, R.~S. Burachik, and A.~N. Iusem.
\newblock A proximal point method for nonsmooth convex optimization problems in
  banach spaces.
\newblock {\em Abstract and Applied Analysis}, 2(1--2):97--120, 1997.

\bibitem{AFP}
L.~Ambrosio, N.~Fusco, and D.~Pallara.
\newblock Functions of {B}ounded {V}ariation and {F}ree {D}iscontinuity
  {P}roblems.
\newblock {\em Oxford Science Publications}, 2000.

\bibitem{AGS}
L.~Ambrosio, N.~Gigli, and L.~Savar\'e.
\newblock {G}radient {F}lows in {M}etric {S}paces and in {S}paces of the
  {P}robability {M}easures.
\newblock {\em Birkh{\"a}user}, 2008.

\bibitem{prox-banach-2}
K.~Aoyama, F.~Kohsaka, and W.~Takahashi Abstract.
\newblock Proximal {P}oint {M}ethods {F}or {M}onotone {O}perators {I}n {B}anach
  {S}paces.
\newblock {\em Taiwanese {J}ournal of {M}athematics}, 15(1):259--281, 2011.

\bibitem{rkhs2}
N.~Aronszajn.
\newblock Theory of reproducing kernels.
\newblock {\em Trans. Am. Math. Soc.}, 68(3):337--404, 1950.

\bibitem{f}
J.~Ashburner.
\newblock A fast diffeomorphic image registration algorithm.
\newblock {\em Neuro{I}mage}, 38(1):95--113, 2007.

\bibitem{RF-geometry}
D.~Bao, S.S. Chern, and Z.~Shen.
\newblock An introduction to {R}iemaniann-{F}insler geometry.
\newblock {\em Springer-Verlag New York, Inc.}, 2000.

\bibitem{convex-optimization}
S.~Boyd and L.~Vandenberghe.
\newblock Convex optimization.
\newblock {\em Cambridge University Press}, 2004.

\bibitem{Brezis}
H.~Brezis.
\newblock Functional {A}nalysis, {S}obolev {S}paces, and {P}artial
  {D}ifferential {E}quations.
\newblock {\em Springer}, 2010.

\bibitem{caselles-active-contours}
V.~Caselles, R.~Kimmel, and G.~Sapiro.
\newblock Geodesic active contours.
\newblock {\em International Journal of Computer Vision}, 22(1):61--79, 1997.

\bibitem{CV-01}
T.~Chan and L.~Vese.
\newblock Active contours without edges.
\newblock {\em IEEE Transactions on Image Processing}, 10(2):266--277, 2001.

\bibitem{chang08articulated}
W.~Chang and M.~Zwicker.
\newblock Automatic registration for articulated shapes.
\newblock {\em Computer Graphics Forum (Proceedings of SGP 2008)},
  27(5):1459--1468, 2008.

\bibitem{charpiat-new-metrics}
G.~Charpiat, R.~Keriven, J.~P. Pons, and O.~D. Faugeras.
\newblock Designing spatially coherent minimizing flows for variational
  problems based on active contours.
\newblock {\em Proc. ICCV}, 2:1403--1408, 2005.

\bibitem{charpiat-generalized-gradient}
G.~Charpiat, P.~Maurel, J.~P. Pons, R.~Keriven, and O.~D. Faugeras.
\newblock Generalized {G}radients: {P}riors on {M}inimization {F}lows.
\newblock {\em International Journal of Computer Vision}, 73(3):325--344, July
  2007.

\bibitem{citeulike}
Y.~Chen and G.~Medioni.
\newblock Object modeling by registration of multiple range images.
\newblock {\em Proc. Conf. on Robotics and Automation}, pages 2724--2729, 1991.

\bibitem{Clarke}
F.H. Clarke.
\newblock On the inverse function theorem.
\newblock {\em Pacific J. Math.}, 64(1):97--102, 1976.

\bibitem{Demengel}
F.~Demengel.
\newblock Fonctions \`a hessien born\'e.
\newblock {\em Annales de l'Institut Fourier}, 34(2):155--190, 1984.

\bibitem{EG}
L.~Evans and R.F. Garipey.
\newblock Measure theory and fine properties of functions.
\newblock {\em CRC Press}, 1992.

\bibitem{d}
G.~Gilboa and S.~Osher.
\newblock Nonlocal linear image regularization and supervised segmentation.
\newblock {\em {SIAM} {M}ultiscale {M}odeling and {S}imulation}, 6:595--630,
  2007.

\bibitem{Glaunes-matching}
J.~Glaunes, A.~Trouv\'{e}, and L.~Younes.
\newblock Diffeomorphic matching of distributions : A new approach for
  unlabelled point-sets and sub-manifolds matching.
\newblock {\em Proc. of CVPR'04}, 2:712--718, 2004.

\bibitem{Mennucci-Stiefel-manifolds}
P.~Harms and A.~Mennucci.
\newblock Geodesics in infinite dimensional {S}tiefel and {G}rassmann
  manifolds.
\newblock {\em Compte Rendus de l'Acad\'amie des Sciences},
  350(15?16):773--776, 2012.

\bibitem{c}
A.~Herbulot, S.~Jehan Besson, S.~Duffner, M.~Barlaud, and G.~Aubert.
\newblock Segmentation of vectorial image features using shape gradients and
  information measures.
\newblock {\em Journal of {M}athematical {I}maging and {V}ision}, 25:365--386,
  2006.

\bibitem{Ulbrich}
M.~Hinze, R.~Pinnau, M.~Ulbrich, and S.~Ulbrich.
\newblock Optimization with {PDE}-{C}onstraints.
\newblock {\em Springer}, 2008.

\bibitem{Kass88Snakes}
M.~Kass, A.~Witkin, and D.~Terzopoulos.
\newblock Snakes: Active contour models.
\newblock {\em International Journal of Computer Vision}, 1(4):321--331,
  January 1988.

\bibitem{Kilian-shape-space}
M.~Kilian, N.J. Mitra, and H.~Pottmann.
\newblock Geometric modeling in shape space.
\newblock {\em ACM Transactions on Graphics}, 26(3):1--8, 2007.

\bibitem{b}
J.~Kim, J.W. Fisher, A.~Yezziand, M.~Cetin, and A.S. Willsky.
\newblock A nonparametric statistical method for image segmentation using
  information theory and curve evolution.
\newblock {\em {IEEE} {T}rans. {I}mage {P}roc.}, 14:486--502, 2005.

\bibitem{malladi-shape-modeling}
R.~Malladi, J.~A. Sethian, and B.~C. Vemuri.
\newblock Shape modeling with front propagation: {A} level set approach.
\newblock {\em IEEE Trans. Patt. Anal. and Mach. Intell.}, 17(2):158--175,
  1995.

\bibitem{Mennucci-CIME}
A.C. Mennucci.
\newblock Level set and {PDE} based reconstruction methods: Applications to
  inverse problems and image processing.
\newblock {\em Lecture Notes in Mathematics, Lectures given at the C.I.M.E.
  Summer School held in Cetraro, 8-13 September 2008, Springer-Verlag}, 2013.

\bibitem{MaMi}
P.W. Michor and D.~Mumford.
\newblock Riemannian geometries on spaces of plane curves.
\newblock {\em J. Eur. Math. Soc.}, 8(1):1--48, 2006.

\bibitem{MumfordShah89}
D.~Mumford and J.~Shah.
\newblock Optimal approximations by piecewise smooth functions and associated
  variational problems.
\newblock {\em Communications on Pure and Applied Mathematics}, XLII(4), 1989.

\bibitem{a}
K.~Ni, X.~Bresson, T.F. Chan, and S.~Esedoglu.
\newblock Local histogram based segmentation using the {W}asserstein distanc.
\newblock {\em International {J}ournal of {C}omputer {V}ision}, 84:97--111,
  2009.

\bibitem{Nocedal}
J.~Nocedal and S.J. Wright.
\newblock Numerical optimization.
\newblock {\em Springer}, 1999.

\bibitem{Ring-opti-manifolds}
W.~Ring and B.~Wirth.
\newblock Optimization methods on {R}iemannian manifolds and their application
  to shape space.
\newblock {\em SIAM Journal on Optimization}, 22(2):596--627, 2012.

\bibitem{FX-ref}
L.~Risser, F.-X. Vialard, R.~Wolz, M.~Murgasova, D.~Holm, and D.~Rueckert.
\newblock Simultaneous multiscale registration using large deformation
  diffeomorphic metric mapping.
\newblock {\em IEEE Transactions on Medical Imaging}, 30(10):1746--1759, 2011.

\bibitem{Shelton-2000}
C.R. Shelton.
\newblock Morphable surface models.
\newblock {\em Int. J. Comput. Vision}, 38(1):75--91, June 2000.

\bibitem{SotirasDP13}
A.~Sotiras, C.~Davatzikos, and N.~Paragios.
\newblock Deformable medical image registration: {A} survey.
\newblock {\em IEEE Trans. Med. Imaging}, 32(7):1153--1190, 2013.

\bibitem{sundaramoorthi-2006}
G.~Sundaramoorthi, J.D. Jackson, A.~J. Yezzi, and A.~C. Mennucci.
\newblock Tracking with {S}obolev active contours.
\newblock {\em Proc. CVPR}, 1:674--680, 2006.

\bibitem{Mennucci-filtering}
G.~Sundaramoorthi, A.C.G. Mennucci, S.~Soatto, and A.~Yezzi.
\newblock A new geometric metric in the space of curves, and applications to
  tracking deforming objects by prediction and filtering.
\newblock {\em SIAM J. Imaging Sciences}, 4(1):109--145, 2011.

\bibitem{sundaramoorthi-sobolev-active}
G.~Sundaramoorthi, A.~J. Yezzi, and A.~C. Mennucci.
\newblock Sobolev {A}ctive {C}ontours.
\newblock {\em International Journal of Computer Vision}, 73(3):345--366, July
  2007.

\bibitem{Yezzi-H2}
G.~Sundaramoorthi, A.~J. Yezzi, and A.~C. Mennucci.
\newblock Properties of {S}obolev-type metrics in the space of curves.
\newblock {\em Interfaces and Free Boundaries}, 10(4):423--445, 2008.

\bibitem{sundaramoorthi-new-possibilities}
G.~Sundaramoorthi, A.~J. Yezzi, A.~C. Mennucci, and G.~Sapiro.
\newblock New {P}ossibilities with {S}obolev {A}ctive {C}ontours.
\newblock {\em International Journal of Computer Vision}, 84(2):113--129,
  August 2009.

\bibitem{2010.03.20}
A.~Trouv{\'e} and F.-X. Vialard.
\newblock Shape splines and stochastic shape evolutions: A second order point
  of view.
\newblock {\em Quarterly of Applied Mathematics}, 70:219--251, 2012.

\bibitem{currents-matching}
M.~Vaillant and J.~Glaun\`{e}s.
\newblock Surface matching via currents.
\newblock {\em Information Processing in Medical Imaging, Lecture Notes in
  Computer Science}, 3565:381--392, 2005.

\bibitem{Younes-04}
M.~Vaillant, M.~I Miller, L.~Younes, and A.~Trouv\'e.
\newblock Statistics on diffeomorphisms via tangent space representations.
\newblock {\em NeuroImage}, 23(1):161--169, 2004.

\bibitem{e}
Tom Vercauteren, Xavier Pennec, Aymeric Perchant, and Nicholas Ayache.
\newblock Diffeomorphic demons: Efficient non-parametric image registration.
\newblock {\em Neuro{I}mage}, 45(1, supplement 1):S61--S72, 2009.

\bibitem{rkhs}
G.~Wahba.
\newblock Spline models for observational data.
\newblock {\em CBMS-NSF Regional conference series. SIAM}, 1990.

\bibitem{Yezzi-Menn-2005b}
A.~Yezzi and A.~Mennucci.
\newblock Conformal metrics and true ``gradient flows'' for curves.
\newblock {\em Proc. ICCV}, 1:913--919, 2005.

\bibitem{Yezzi-Menn-2005a}
A.~Yezzi and A.~Mennucci.
\newblock Metrics in the space of curves.
\newblock {\em arXiv:math/0412454v2}, 2005.

\bibitem{Younes-elastic-distance}
L.~Younes.
\newblock Computable elastic distances between shapes.
\newblock {\em SIAM Journal of Applied Mathematics}, 58(2):565--586, 1998.

\bibitem{Younes-explicit-geodesic}
L.~Younes, P.W. Michor, J.~Shah, and D.~Mumford.
\newblock A metric on shape space with explicit geodesics.
\newblock {\em Rend. Lincei Mat. Appl.}, 19(1):25--57, 2008.

\end{thebibliography}

\end{document}